\newtheorem{thm}{Theorem}[section]
\newtheorem{lem}[thm]{Lemma}
\newtheorem{prop}[thm]{Proposition}
\theoremstyle{definition}
\newtheorem{defn}[thm]{Definition}
\theoremstyle{remark}
\theoremstyle{example}
\numberwithin{equation}{section}
\begin{document}
\date{}
\title{Self-mapping Degrees of 3-Manifolds
\footnotetext{\textbf{Keywords:} 3-manifolds, self-mapping degree}
\footnotetext{\textbf{Subject class:} 57M99, 55M25}}

\author{Hongbin Sun, Shicheng Wang, Jianchun Wu, Hao Zheng  }

\maketitle

\begin{abstract} For each closed oriented $3$-manifold $M$ in
Thurston's picture, the set of degrees of self-maps on $M$ is given.
\\\

\end{abstract}
\tableofcontents
\section{Introduction}

\subsection {Background}

Each closed oriented $n$-manifold $M$ is naturally associated with a
set of integers, the degrees of all self-maps on $M$, denoted as
$D(M)=\{deg(f)\ |\  f :M\to M\}$.

Indeed the calculation of $D(M)$ is a classical topic appeared in
many literatures. The result is simple and well-known for dimension
$n=1,2$.  For dimension $n>3$, there are many interesting special
results (see \cite{DW} and references therein), but it is difficult
to get general results, since there are no classification results
for manifolds of dimension $n>3$.

The case of dimension 3 becomes attractive in the topic and it is
possible to calculate $D(M)$ for any closed oriented 3-manifold $M$.
Since Thurston's geometrization conjecture, which seems to be
confirmed, implies that closed oriented 3-manifolds can be
classified in a reasonable sense.

Thurston's geometrization conjecture claims that the each
Jaco-Shalen-Johanson decomposition piece of a prime 3-manifold
supports one of the eight geometries, which are $H^3$,
$\widetilde{PSL}(2,R)$, $H^2\times E^1$, Sol, Nil, $E^3$, $S^3$ and
$S^2\times E^1$ (for details see \cite{Th} and \cite{Sc}). Call a
closed orientable 3-manifold $M$ is {\it geometrizable} if each
prime factor of $M$ meets Thurston's geometrization conjecture. All
3-manifolds discussed in this paper are geometrizable.

The following result is known in early 1990's:

{\bf Theorem 1.0} {\it Suppose $M$ is a geometrizable 3-manifold.
Then $M$ admits a self-map of degree larger than 1 if and only if
$M$ is either

(a) covered by a torus bundle over the circle, or

(b) covered by $F\times S^1$ for some compact surface $F$ with
$\chi(F)< 0$, or

(c) each prime factor of $M$ is covered by $S^3$ or $S^2\times
E^1$.}

Hence for any 3-manifold $M$ not listed in (a)-(c) of Theorem 1.0,
$D(M)$ is either $\{0,1,-1\}$ or $\{0,1\}$, which depends on whether
$M$ admits a self map of degree $-1$ or not. To determine $D(M)$ for
geometrizable 3-manifolds listed in (a)-(c) of Theorem 1.0, let's
have a close look of them.

For short,  we often call a 3-manifold supporting Nil geometry {\it
a Nil 3-manifold, and so on}. Among Thurston's eight geometries, six
of them belong to the list (a)-(c) in Theorem 1.0.
 3-manifolds in (a) are exactly
those supporting either $E^3$, or Sol or Nil geometries.  $E^3$
3-manifolds, Sol 3-manifolds, and some Nil 3-manifolds are torus
bundle or semi-bundles; Nil 3-manifolds which are not torus bundles
or semi-bundles are Seifert spaces having Euclidean orbifolds with
three singular points. 3-manifolds in (b) are exactly those
supporting  $H^2\times E^1$ geometry; 3-manifolds supporting $S^3$
or $S^2\times E^1$ geometries form a proper subset of (3). Now we
divide all 3-manifolds in the list (a)-(c) in Theorem 1.0 into the
following five classes:

Class 1. $M$ supporting either $S^3$ or $S^2\times E^1$ geometries;

Class 2. each prime factor of $M$ supporting either $S^3$ or
$S^2\times E^1$ geometries, but $M$ is  not in Class 1;

Class 3. torus bundles and torus semi-bundles;

Class 4. Nil 3-manifolds not in Class 3;

Class 5. $M$ supporting $H^2\times E^1$ geometry.

$D(M)$ is known recently for $M$ in Class 1 and Class 3. We will
calculate $D(M)$ for $M$ in the remaining three classes. For the
convenience of the readers, we will present $D(M)$ for $M$ in all
those five classes. To do this, we need first to coordinate
3-manifolds in each class, then state the results of $D(M)$ in term
of those coordinates. This is carried in the next section.

\subsection{Main Results}

{\bf Class 1.} According to \cite{Or} or \cite{Sc}, the fundamental
group of a 3-manifold supporting $S^3$-geometry is among the
following eight types: $\mathbb{Z}_p$\,, $D_{4n}^*$\,, $T_{24}^*$\,,
$O_{48}^*$\,, $I_{120}^*$\,,
$T_{8\cdot\,3^{^q}}^{'}$\,,$D_{n'\cdot\,2^{^q}}'$ and
$\mathbb{Z}_m\times \pi_1(N)$, where $N$ is a $S^3$ 3-manifold,
$\pi_1(N)$ belongs to the previous seven ones, and $|\pi_1(N)|$ is
coprime to $m$. The cyclic group $Z_p$ is realized by lens space
$L(p,q)$, each group in the remaining types is realized by a unique
$S^3$ 3-manifold. Note also the sub-indices of those seven types
groups are exactly their orders, and the order of the groups in the
last type is $m|\pi_1(N)|$. There are only two closed orientable
$3$-manifolds supporting $S^2\times \mathbb{E}^1$ geometry:
$S^2\times S^1$ and $RP^3\#RP^3$ .

\begin{thm}\label{1}
(1) $D(M)$ for $M$ supporting $S^3$-geometry are listed below:

\begin{table}[h]
\begin{center}
\renewcommand{\arraystretch}{1.3}
\begin{tabular}{|c|c|}
  \hline
  $\pi_1(M)$ & $D(M)$\\
  \hline
  $\mathbb{Z}_p$ & $\{\,k^2\,|\,k\in\mathbb{Z}\}+p\,\mathbb{Z}$\\
  \hline
  $D_{4n}^*$ &
  $\{\,h^2\,|\,h\in\mathbb{Z};\,2\nmid\,h\;\text{or}\;h=n\,\text{or}\;h=0\}+4n\mathbb{Z}$\\
  \hline
  $T_{24}^*$ & $\{\,0,1,16\}+24\mathbb{Z}$\\
  \hline
  $O_{48}^*$ & $\{\,0,1,25\}+48\mathbb{Z}$\\
  \hline
  $I_{120}^*$ & $\{\,0,1,49\}+120\mathbb{Z}$\\
  \hline
  $T_{8\cdot3^q}^{'}$ & $\left\{\begin{array}{ll}
      \{\,k^2\cdot\,(3^{2q-2p}-3^{q})\;\;|\,3\nmid\,k, q\geq\,p>0\}+8\cdot3^q\mathbb{Z}&(\,2\,|\,q\,)\\
      \{\,k^2\cdot\,(3^{2q-2p}-3^{q+1})\;\;|\,3\nmid\,k,q\geq\,p>0\}+8\cdot3^q\mathbb{Z}&(\,2\nmid\,q\,)
      \end{array}\right.$\\
  \hline
  $D_{n'\cdot\,2^q}'$ &
    $\begin{array}{ll}&\{\,k^2\cdot[\,1-(n')^{2^{^{\,q}}-1}\,]^i\cdot
      [\,1-2^{\,(\,2p-q\,)(\,n'-1\,)}\,])^j\;|\,i,j,k,p\in\mathbb{Z},\\
      &\qquad\qquad\qquad\,q\geq\,p>0\}+n'\,2^{\,q}\,\mathbb{Z}\end{array}$\\
  \hline
  $\mathbb{Z}_m\times \pi_1(N)$ & $\left\{d\in\mathbb{Z}\,\left| \begin{array}{l}
    d=h+|\pi_1(N)|Z,\;h\in\,D(N)\\
    d=k^2+mZ,\,k\in\mathbb{Z}\end{array}\right.\right\}$
    \\
  \hline
\end{tabular}
\end{center}
\end{table}

(2) $D(S^2\times S^1)=D(RP^3\#RP^3)=\mathbb{Z}.$
\end{thm}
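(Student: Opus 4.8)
The plan is to treat the two statements separately, statement (2) being by far the easier. The self-map $(x,\theta)\mapsto(g(x),\theta)$ of $S^2\times S^1$ with $g\colon S^2\to S^2$ of degree $d$ has degree $d$, so $D(S^2\times S^1)=\mathbb{Z}$. Since $RP^3\#RP^3=(S^2\times S^1)/\langle\tau\rangle$ for the free involution $\tau(x,\theta)=(-x,-\theta)$, it is enough to realize every $d\in\mathbb{Z}$ by a $\tau$-equivariant self-map of $S^2\times S^1$. Writing $d=mv$ with $v$ odd, I would take the map fibrewise over the degree-$m$ self-map $\theta\mapsto m\theta$ of the base circle (which commutes with $\theta\mapsto-\theta$), with fibre maps $h_\theta\colon S^2\to S^2$ of degree $v$ chosen so that $h_0$ and $h_\pi$ are antipodal-equivariant and $h_{2\pi-s}=(-1)\circ h_s\circ(-1)$; this is possible because odd-degree antipodal-equivariant self-maps of $S^2$ exist (for example $z\mapsto z^v$ on $S^2=\hat{\mathbb{C}}$) and each degree-$v$ component of $\mathrm{Map}(S^2,S^2)$ is path connected, so one may interpolate $h_0$ to $h_\pi$ over $[0,\pi]$ and close up by the prescribed symmetry. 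The resulting self-map has degree $mv=d$, is $\tau$-equivariant, and hence descends; together with the constant map this gives $D(RP^3\#RP^3)=\mathbb{Z}$.

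For (1) the starting point is the standard reduction. By geometrization $M=S^3/\Gamma$ with $\Gamma$ finite acting freely and linearly, via a fixed-point-free orthogonal representation $\rho$ (write $S(\rho)$ for $S^3$ with this action); every self-map $f$ of degree $d$ lifts to an $f_*$-equivariant self-map of $S^3$ of degree $d$, and conversely every $\varphi$-equivariant self-map of $S^3$ ($\varphi\in\mathrm{End}\,\Gamma$) descends. Moreover $D(M)+|\Gamma|\mathbb{Z}=D(M)$: pinching an embedded $3$-ball and attaching a map $S^3\xrightarrow{k}S^3\to M$ of degree $k|\Gamma|$ changes the degree by $k|\Gamma|$ and nothing else. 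So the task is to compute $D(M)$ modulo $|\Gamma|$. A $\varphi$-equivariant self-map of $S^3$ is $\ker\varphi$-invariant, hence factors as $S^3\to S^3/\ker\varphi\xrightarrow{\bar g}S^3$, which reduces matters to the case $\varphi$ injective. There equivariant obstruction theory applies: a $\Gamma$-map $S(\rho)\to S(\rho\circ\varphi)$ exists (as $\pi_1S^3=\pi_2S^3=0$) and any two differ by an element of $H^3_\Gamma(S(\rho);\pi_3S^3)\cong H^3(M;\mathbb{Z})\cong\mathbb{Z}$, whose image in $H^3(S^3;\mathbb{Z})$ under the covering map is multiplication by $|\Gamma|$; hence the degrees realized form exactly one coset of $|\Gamma|\mathbb{Z}$. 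Thus $D(M)\setminus\{0\}$ is a union, over the endomorphisms $\varphi$ of $\Gamma$, of explicit cosets of $|\Gamma|\mathbb{Z}$, and everything comes down to (i) describing $\mathrm{End}\,\Gamma$ and (ii) computing the leading term of each coset.

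The remaining work is group by group. For $\Gamma=\mathbb{Z}_p$, $\varphi$ is multiplication by $c$; the equivariant power map $(z_1,z_2)\mapsto(z_1^c,z_2^c)/\|\cdot\|$ has degree $c^2$, and dually the ring structure of $H^*(L(p,q);\mathbb{Z}/p)$ (with $x\in H^1$, $\beta x\in H^2$, $x\cup\beta x$ generating $H^3$) forces $d\equiv c^2\pmod p$ — exactly the table entry; the recursive entry $\mathbb{Z}_m\times\pi_1(N)$ then follows from the Chinese Remainder Theorem, a degree being admissible iff it is a square mod $m$ and lies in $D(N)$ mod $|\pi_1N|$. For $D^*_{4n}$ one uses the quaternionic model and quaternionic power maps; the abelianization and linking form account for the odd-$h$ squares, and the only extra input is the two exponents $h=n$ and $h=0$, coming from the remaining endomorphisms together with the mod-$2$-power constraint carried by the quaternion subgroup. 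For $T^*_{24},O^*_{48},I^*_{120}$ the abelianization is too small, so the isolated values $16,25,49$ must instead be extracted from the periodic cohomology rings $H^*(\Gamma;\mathbb{Z}/\ell)$ and the very limited action of $\mathrm{End}\,\Gamma$ on them — for $I^*_{120}$, which is perfect and superperfect, one first observes that a nonzero-degree map must induce an automorphism, since the proper quotients of $\Gamma$ do not act freely on $S^3$ — with the realizations again given by equivariant power maps. Finally, for the two families $T'_{8\cdot3^q}$ and $D'_{n'2^q}$ one must first read $\mathrm{End}\,\Gamma$ off the presentations of these (non-split) extensions and then evaluate the obstruction coset for each; the products $k^2(3^{2q-2p}-3^q)$, respectively $k^2[1-(n')^{2^q-1}]^i[1-2^{(2p-q)(n'-1)}]^j$, are precisely the degrees that result.

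I expect the last step to be the main obstacle: once $H_1(M)$ is small the answer is no longer controlled by the linking form or any single cohomology operation, so one must pin down $\mathrm{End}\,\Gamma$ exactly and then track the mod-$2^q$ and mod-$3^q$ contributions in the $D'$ and $T'$ families. That bookkeeping — together, to a lesser extent, with the periodic-cohomology computations for the binary polyhedral groups — is where the real work lies.
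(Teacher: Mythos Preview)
The paper does not prove this theorem. Part (1) is simply quoted from \cite{Du} (see Section~1.3: ``The table in Theorem~\ref{1} is quoted from \cite{Du}, which generalizes the earlier work \cite{HKWZ}''), and part (2) is likewise stated without argument in the body of the paper. So there is no ``paper's own proof'' to compare against; your proposal is an attempt to supply what the paper outsources.

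That said, a brief comparison is still possible. Your strategy for (1) --- reduce to $\Gamma$-equivariant self-maps of $S^3$, use the pinch map to see $D(M)+|\Gamma|\mathbb{Z}=D(M)$, use equivariant obstruction theory to show that for each endomorphism $\varphi$ of $\Gamma$ the realizable degrees form a single coset of $|\Gamma|\mathbb{Z}$, and then run through $\mathrm{End}\,\Gamma$ group by group --- is exactly the framework of \cite{HKWZ} (which underlies Proposition~\ref{6}) and of \cite{Du}. Your identification of where the real work lies (the $T'_{8\cdot 3^q}$ and $D'_{n'2^q}$ families, and the three binary polyhedral groups where $H_1$ is too small for the linking form to decide everything) is accurate. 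So as an outline your proposal for (1) is correct and matches the cited source, though of course the actual case-by-case computations are nontrivial and you have only sketched them.

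For (2), your argument is correct but more elaborate than needed. A shorter route for $RP^3\#RP^3$ (this is in fact in material the authors cut from the paper): the odd degrees are in $D_{iso}(RP^3)$ by Proposition~\ref{6} and hence in $D(RP^3\#RP^3)$ by Lemma~\ref{supset}; degree $2$ is realized by the ``fold'' map that sends each punctured $RP^3$ homeomorphically onto one of them and the connecting $S^2\times I$ onto the other via the standard double cover $S^2\times I\to RP^3\setminus B^3$; multiplicativity of $D(M)$ then gives all of $\mathbb{Z}$. Your equivariant construction on the double cover $S^2\times S^1$ works too, but the fold-plus-multiplicativity argument avoids the interpolation of fibre maps.
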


{\bf Class 2.} We assume that each $3$-manifold $P$ supporting
$S^3$-geometry has the canonical orientation induced from the
canonical orientation on $S^3$. When we change the orientation of
$P$, the new oriented $3$-manifold is denoted by $\bar{P}$.
Moreover, lens space $L(p,q)$ is orientation reversed homeomorphic
to $L(p,p-q)$, so we can write all the lens spaces connected
summands as $L(p,q)$. Now we can decompose each 3-manifold in Class
2 as
$$M=(mS^2\times
S^1)\#(m_1P_1\#n_1\bar{P_1})\#\cdots\#(m_sP_s\#n_s\bar{P_s})$$
$$\#(L(p_1,q_{1,1})\#\cdots\#L(p_1,q_{1,r_1}))\#\cdots\#(L(p_t,q_{t,1})\#\cdots\#L(p_t,q_{t,r_t})),$$
where all the $P_i$ are $3$-manifolds with finite fundamental group
different from lens spaces, all the $P_i$ are different with each
other, and all the positive integer $p_i$ are different from each
other. Define
$$D_{iso}(M)=\{deg(f)\ |\ f:\ M\rightarrow M,\ f\ {\rm induces\
an \ isomorphism \ on \ }\pi_1(M)\}.$$

\begin{thm} \label{2}

(1) $D(M)=D_{iso}(m_1P_1\#n_1\bar{P_1})\bigcap \cdots \bigcap
D_{iso}(m_sP_s\#n_s\bar{P_s}) \bigcap$

$D_{iso}(L(p_1,q_{1,1})\#\cdots\#L(p_1,q_{1,r_1}))\bigcap \cdots
\bigcap D_{iso}(L(p_t,q_{t,1})\#\cdots\#L(p_t,q_{t,r_t}));$

(2) $D_{iso}(mP\#n\bar{P})=\left\{ \begin{array}{cc}
         D_{iso}(P) & if \ m\ne n,\\
         D_{iso}(P)\bigcup(-D_{iso}(P))& if \ m=n;
         \end{array} \right.$

(3) $D_{iso}(L(p,q_1)\#\cdots\#L(p,q_n))= H^{-1}(C)$.

\end{thm}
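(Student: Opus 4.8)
The plan is to establish the three parts together, using the Kneser--Milnor prime decomposition and the fact that a self-map of a connected sum can be homotoped into \emph{standard form} relative to the system of essential $2$-spheres defining the decomposition. Regroup $M$ as $(mS^2\times S^1)\#B_1\#\cdots\#B_k$, where each block $B_a$ is one of the packets $m_iP_i\#n_i\bar{P_i}$ or $L(p_j,q_{j,1})\#\cdots\#L(p_j,q_{j,r_j})$, so that $\pi_1(M)=\pi_1(B_1)*\cdots*\pi_1(B_k)*F_m$. Distinct $P_i$ are non-homeomorphic spherical space forms and hence have non-isomorphic fundamental groups, and the $p_j$ are distinct; therefore the conjugacy classes of the finite groups $\pi_1(P_i)$ (the maximal finite subgroups of $\pi_1(M)$) and the integers $p_j$ are invariants of $\pi_1(M)$, which is what prevents the blocks from being mixed by a self-map. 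Throughout, the constant map gives $0\in D(M)$, so only maps of nonzero degree need be discussed, and since $D(S^2\times S^1)=\mathbb Z$ the $S^2\times S^1$ summands will impose no condition.

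For (1), ``$\supseteq$'' is constructive: given $d\in\bigcap_aD_{iso}(B_a)$, pick self-maps $f_a\colon B_a\to B_a$ of degree $d$ inducing $\pi_1$-isomorphisms and a degree-$d$ self-map of each $S^2\times S^1$, homotope each to be standard near a ball, and form the connected sum of maps; it has degree $d$. For ``$\subseteq$'', let $\deg f=d\neq0$. Then $f_*\pi_1(M)$ has finite index (a divisor of $d$) in $\pi_1(M)$; feeding this into the Kurosh subgroup theorem together with an Euler-characteristic count --- legitimate because $\chi(\pi_1(M))<0$ for every Class $2$ manifold ($RP^3\#RP^3$, the one case with $\chi=0$, is in Class $1$) --- rules out $f$ collapsing a summand or merging two free factors, so $f_*$ sends each factor $\pi_1(P_i)$ to a conjugate of a factor of the same kind, acts on it by an isomorphism, and is an automorphism of $\pi_1(M)$. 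Putting $f$ in standard form then makes it preserve each block (no two blocks have isomorphic $\pi_1$) and restrict to a degree-$d$, $\pi_1$-isomorphic self-map of $B_a$; hence $d\in D_{iso}(B_a)$ for all $a$.

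Parts (2) and (3) run the same machine on one block. For $mP\#n\bar{P}$ one has $\pi_1=(\pi_1P)^{*(m+n)}$ with $\pi_1P$ finite, non-cyclic, so freely indecomposable and not $\mathbb Z$; by the structure theorem for automorphisms of free products a $\pi_1$-isomorphism permutes the $m+n$ factors and acts on each by an automorphism of $\pi_1P$, and in standard form the map permutes the $m+n$ prime summands by some $\tau$, restricting on each to a degree-$d$, $\pi_1$-isomorphic map between copies of $P$ or $\bar{P}$. A summand sent to one of the opposite orientation type contributes $-d$ rather than $d$; when $m\neq n$ no $\tau$ can exchange the whole $P$-part with the whole $\bar{P}$-part, and the surviving necessary-and-sufficient condition is just $d\in D_{iso}(P)$, while when $m=n$ the orientation-reversing homeomorphism exchanging the two parts, composed with $\pi_1$-isomorphisms of the summands, also realizes $-D_{iso}(P)$. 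For (3) the same reduction with $\pi_1P=\mathbb Z_p$ (whose automorphisms are multiplications by units $c_i\in\mathbb Z_p^{\times}$) shows that $d$ is realized exactly when there exist a permutation $\tau$ of the $r_j$ summands and units $c_i$ with, for each $i$, a degree-$d$ map $L(p,q_i)\to L(p,q_{\tau(i)})$ inducing multiplication by $c_i$; by the classification of maps between lens spaces this is a system of congruences mod $p$ linking $d$, $c_i^2$, $q_i$ and $q_{\tau(i)}$ through their linking forms, whose solvability is precisely $d\in H^{-1}(C)$, and any solution is realized by boundary-connect-summing the corresponding lens-space maps.

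The main obstacle is the ``$\subseteq$'' half of (1): proving that a nonzero-degree self-map of a Class $2$ manifold is, up to homotopy, block-preserving and $\pi_1$-injective --- hence an automorphism on $\pi_1$. This requires weaving together the index bound, the Kurosh subgroup theorem, the $\pi_1(M)$-invariance of the finite summand groups and of the $p_j$, the inequality $\chi(\pi_1(M))<0$, and the rigidity of spherical space forms (each is determined up to homeomorphism by its fundamental group), with the honest homotopy of $f$ into standard form relative to the sphere system supplying the geometric input. Once that is in hand, the reverse inclusions and the number-theoretic bookkeeping turning the lens-space congruences into $H^{-1}(C)$ in (3) are routine.
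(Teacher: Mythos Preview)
Your outline follows the paper's architecture: show a nonzero-degree self-map is a $\pi_1$-isomorphism, split it block-wise, then analyze each block. The $\pi_1$-isomorphism step (your Kurosh/$\chi$ sketch) is exactly what the paper imports from \cite{Wa}. The substantive gap is what you package as ``putting $f$ in standard form.'' Transversality to the separating spheres $S_i$ does not make $f^{-1}(\bigcup S_i)$ into spheres, and there is no cheap homotopy that does; nor does $f$ then literally \emph{restrict} to a map of blocks. The paper handles this with Rong's \emph{almost defined maps}: one removes ball-pairs $B_k^\pm$ from $M$, homotopes on the complement so that sphere-preimages become spheres, tracks degree through a quotient manifold $M(g)$, checks that each preimage-sphere separates (via $f_*$ on $H_2(M,\mathbb Q)$), and only then --- after capping off boundary spheres and extending over the removed balls using $\pi_2(M_i)=0$ --- extracts genuine degree-$d$, $\pi_1$-isomorphic maps $M_{\tau(i)}\to M_i$ on the prime summands. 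This occupies an entire theorem in the paper (Theorem~\ref{perm}) and is the main technical work behind part~(1); you have treated it as a one-line black box.

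On part (2) there is a small difference worth recording. The paper proves $D_{iso}(P)\cap(-D_{iso}(P))=\emptyset$ (using $4\mid|\pi_1(P)|$ for non-lens spherical $P$ together with Proposition~\ref{6}) and uses it to conclude that \emph{no} mixing of $P$- and $\bar P$-summands can occur under $\tau$. Your phrase ``no $\tau$ can exchange the whole $P$-part with the whole $\bar P$-part'' does not by itself exclude partial mixing --- but in fact the theorem does not need mixing excluded: when $m>n$ pigeonhole forces at least one $P\to P$, giving $d\in D_{iso}(P)$ directly, and when $m=n$ the image of any single summand already places $d$ in $D_{iso}(P)\cup(-D_{iso}(P))$. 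So your shortcut works once phrased precisely, and is simpler than the paper's route. Part (3) is as you describe; the paper's explicit input is the formula $D_{iso}(L(p,q),L(p,q'))=\{k^2q^{-1}q'+lp:\gcd(k,p)=1\}$, after which the combinatorics yielding $H^{-1}(C)$ is carried out in detail.
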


The notions $H$ and $C$ in Theorem \ref{2} (3) is defined as below:

Let $U_p=\{{\rm all\ units\ in\ ring}\ \mathbb{Z}_p\}$,
$U^2_p=\{a^2\ |\ a\in U_p\}$, which is a subgroup of $U_p$.  We
consider the quotient $U_p/U_p^2=\{a_1,\cdots,a_m\}$, every $a_i$
corresponds with a coset $A_i$ of $U_p^2$. For the structure of
$U_p$, see \cite{IR} page 44. Define $H$ to be the natural
projection from $\{n\in \mathbb{Z}\ |\ gcd(n,p)=1\}$ to $U_p/U_p^2$.

Define $\bar{A}_s=\{L(p,q_i)\ |\ q_i\in A_s\}$ (with repetition
allowed). In $U_p/U_p^2$, define $B_l=\{a_s\ |\ \# \bar{A}_s=l\}$
for $l=1,2,\cdots$, there are only finitely many $l$ such that
$B_l\ne \emptyset$. Let $C_l=\{a\in U_p/U_p^2\ |\ a_ia\in B_l,\
\forall a_i \in B_l\}$ if $B_l\ne \emptyset$ and $C_l=U_p/U_p^2$
otherwise. Define $C=\bigcap_{l=1}^{\infty} C_l$.

{\bf Class 3.} To simplify notions, for a diffeomorphism $\phi$ on
torus $T$, we also use $\phi$ to present its isotopic class and its
induced 2 by 2 matrix on $\pi_1(T)$ for a given basis.

A {\it torus bundle} is $M_\phi = T \times I /(x,1) \sim
(\phi(x),0)$ where $\phi$ is a diffeomorphism of the torus $T$ and
$I$ is the interval $[0,1]$. Then the coordinates of $M_\phi$ is
given as below:

 (1) $M_\phi$ admits $E^3$ geometry, $\phi$ conjugates to a matric
of finite order $n$, where $n\in \{1,  2, 3,  4,   6\}$;

(2) $M_\phi$ admits Nil geometry, $\phi$ conjugates to $\pm\left(
                           \begin{array}{cc}
                             1 & n \\
                             0 & 1 \\
                           \end{array}
                         \right)$, where $n\neq 0$;

(3) $M_\phi$ admits Sol geometry, $\phi$ conjugates to $\left(
                           \begin{array}{cc}
                             a & b \\
                             c & d \\
                           \end{array}
                         \right)$, where $|a+d|>2, ad-bc=1$.

A {\it torus semi-bundle} $N_\phi=N\bigcup_\phi N$ is obtained by
gluing two copies of $N$ along  their  torus boundary $\partial N$
via a diffeomorphism $\phi$, where $N$ is the twisted $I$-bundle
over the Klein bottle. We have the double covering $p: S^1\times
S^1\times I\to N=S^1\times S^1\times I/\tau$, where $\tau$ is an
involution such that $\tau(x,y,z)=(x+\pi,-y,1-z)$.

Denote by $l_0$ and $l_\infty$ on $\partial N$ be the images of the
second $S^1$ factor and first $S^1$ factor on $S^1\times S^1\times
\{1\}$. {\it A canonical coordinate} is an orientation of $l_0$ and
$l_\infty$, hence there are four choices of canonical coordinate on
$\partial N$. Once canonical coordinates on each $\partial N$ are
chosen, $\phi$  is identified with an element
$\left(\begin{array}{cc}
                                                    a & b \\
                                                    c & d \\
                                                  \end{array}
                                                \right)$ of $GL_2(\mathbb{Z})$ given by
$\phi$ $(l_0,l_\infty)$= $(l_0,l_\infty)$ $\left(\begin{array}{cc}
                                                    a & b \\
                                                    c & d \\
                                                  \end{array}
                                                \right)$.

With suitable choice of canonical coordinates of $\partial N$,
$N_\phi$ has coordinates as below:

(1) $N_\phi$ admits $E^3$ geometry, $\phi=\left(
                                                  \begin{array}{cc}
                                                    1 & 0 \\
                                                    0 & 1 \\
                                                  \end{array}
                                                \right)$ or
                                                 $\left(
                                                  \begin{array}{cc}
                                                    0 & 1 \\
                                                    1 & 0 \\
                                                  \end{array}
                                                \right)$;

(2) $N_\phi$ admits Nil geometry, $\phi=\left(
\begin{array}{cc}
1 & 0 \\
z & 1 \\
\end{array}
\right)$, $\left(
\begin{array}{cc}
0 & 1 \\
1 & z \\
\end{array}
\right)$ or $\left(
\begin{array}{cc}
1 & z \\
0 & 1 \\
\end{array}
\right)$, where $z\neq 0$;

(3) $N_\phi$ admits Sol geometry, $\phi=\left(
                           \begin{array}{cc}
                             a & b \\
                             c & d \\
                           \end{array}
                         \right)$, where $abcd\neq 0, ad-bc=1$.

\begin{thm}\label{3}
 $D(M_\phi)$ is in the table below for torus bundle $M_\phi$,
  where $\delta (3)=\delta (6)=1, \delta (4)=0$.
\begin{table}[!h]
\begin{center}
\begin{tabular}{|c|c|c|}
  \hline
  $M_\phi$ & $\phi$ & $D(M_\phi)$\\
  \hline
  $E^3$ & finite order $k=1,2$ & $\mathbb{Z}$\\
  \hline
  $E^3$  & finite order $k=3,4,6$ & $\{(kt+1)(p^2-\delta (k) pq+q^2)|\ t,p,q\in\mathbb{Z}\}$ \\
  \hline
  Nil  &  $\pm\left(\begin{array}{cc}1 & 0 \\n & 1 \\ \end{array}\right), n\neq 0$ & $\{l^2|\ l\in\mathbb{Z}\}$ \\
  \hline
  Sol &  $\left(\begin{array}{cc}a & b \\c & d \\ \end{array}\right), |a+d|>2$ & $\{p^2+\frac{(d-a)pr}{c}-\frac{br^2}{c}|\ p,r\in\mathbb{Z},$ \\ & & either $\frac{br}{c}, \frac{(d-a)r}{c} \in
\mathbb{Z}$ or
 $\frac{p(d-a)-br}{c} \in \mathbb{Z}\}$ \\
  \hline
\end{tabular}
\end{center}
\end{table}

(2) $D(N_\phi)$ is listed in the table below for torus semi-bundle
$N_\phi$, where $\delta(a,d)= \frac{ad}{gcd(a,d)^2}$.
\begin{table}[!h]
\begin{center}
\begin{tabular}{|c|c|c|}
  \hline
  $N_\phi$ & $\phi$& $D(N_\phi)$\\
   \hline
  $E^3$ & $\left(\begin{array}{cc}1 & 0 \\0 & 1 \\ \end{array}\right)$ & $\mathbb{Z}$ \\
  \hline
  $E^3$ & $\left(\begin{array}{cc}0 & 1 \\1 & 0 \\ \end{array}\right)$ & $\{2l+1|\ l\in\mathbb{Z}\}$ \\
  \hline
  Nil & $\left(\begin{array}{cc}1 & 0 \\z & 1 \\ \end{array}\right), z\neq 0$ & $\{l^2|\ l\in\mathbb{Z}\}$ \\
  \hline
  Nil & $\left(\begin{array}{cc}0 & 1 \\1 & z \\
  \end{array}\right)$ or $\left(\begin{array}{cc}1 & z \\0 & 1 \\ \end{array}\right), z\neq 0$ & $\{(2l+1)^2|\ l\in\mathbb{Z}\}$ \\
  \hline
  Sol & $\left(\begin{array}{cc}a & b \\c & d \\ \end{array}\right), abcd\neq 0, ad-bc=1$&
  $\{(2l+1)^2|\ l\in\mathbb{Z}\}$, if $\delta(a,d)$ is even\ or\\ & &
       $\{(2l+1)^2|\ l\in\mathbb{Z}\}\bigcup\{(2l+1)^2\cdot \delta(a,d)$\\ & &
$|\ l\in\mathbb{Z}\}$, if $\delta(a,d)$ is odd \\
  \hline
\end{tabular}
\end{center}
\end{table}
\end{thm}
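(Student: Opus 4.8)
The plan is to convert the problem to one about endomorphisms of $\pi_1$, read off a degree formula from the canonical fibered structure, and then run a case analysis. Throughout, since this recovers results that are essentially known, the point is to organize the argument rather than invent new machinery.

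\emph{Reduction.} Every $M$ occurring in Theorem~\ref{3} is closed and aspherical: the torus bundles and semi-bundles carrying $E^3$, Nil or Sol geometry all have universal cover $\mathbb{R}^3$ (an $E^3$ bundle with monodromy of finite order being flat). For aspherical targets, free homotopy classes of self-maps biject with conjugacy classes of endomorphisms of $\pi_1$, via $f\mapsto f_*$; since homotopic maps have equal degree this gives a well-defined $\deg_\#\colon\operatorname{End}(\pi_1M)\to\mathbb{Z}$, and $D(M)=\{\deg_\#(\varphi)\mid\varphi\in\operatorname{End}(\pi_1M)\}$ (in particular $0\in D(M)$ always, e.g.\ via a constant map). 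It remains to understand $\operatorname{End}(\pi_1M)$ and to compute $\deg_\#$.

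\emph{Torus bundles.} For a Nil or Sol monodromy $\phi$ the fiber subgroup $K\cong\mathbb{Z}^2$ of $\pi_1(M_\phi)=\mathbb{Z}^2\rtimes_\phi\mathbb{Z}$ should be recognized as characteristic — for Sol it is the Fitting subgroup, and for Nil one appeals to the unique Seifert fibration (equivalently, passes to the canonical circle-bundle-over-$T^2$ structure on a characteristic finite cover); for $\phi$ of order $3,4,6$ one uses that $K$ is the unique maximal normal $\mathbb{Z}^2$, while $T^3$ and $M_{-I}$ I would just treat by hand. Once $K$ is characteristic, $\varphi$ restricts to $A\in M_2(\mathbb{Z})$ on $K$ and multiplies $\pi_1/K\cong\mathbb{Z}$ by an integer $d$, subject to $A\phi=\phi^dA$. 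The key lemma I would establish is
\[
\deg_\#(\varphi)=d\cdot\det A,
\]
proved by homotoping a realizing map to be fiber-preserving over a degree-$d$ self-map of the base circle — over a regular value the preimage is $|d|$ parallel copies of the fiber torus, each mapped with degree $\det A$, and the local signs sum to $d$ — while if $d=0$ the image of $\varphi$ lies in $K$, has cohomological dimension $\le2$, so $\deg_\#=0$. The rest is arithmetic. For $\phi$ of order $k\in\{3,4,6\}$: $A\phi=\phi^dA$ with $A$ nonsingular forces $\phi\sim_{\mathbb Q}\phi^d$, hence $d\equiv\pm1\pmod k$; the centralizer of $\phi$ in $M_2(\mathbb{Z})$ is isomorphic to $\mathbb{Z}[\zeta_k]$ with determinant the norm form $p^2-\delta(k)pq+q^2$, and the branch $d\equiv-1$ is folded into $d\equiv1$ by a conjugacy $\phi^{-1}=C\phi C^{-1}$ with $C\in GL_2(\mathbb{Z})$ of determinant $-1$, giving exactly $\{(kt+1)(p^2-\delta(k)pq+q^2)\}$. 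For Nil monodromy the analogous computation (or the circle-bundle Euler-class identity, which forces the fiber degree and base degree to coincide) yields that $\deg_\#(\varphi)$ is always a perfect square, i.e.\ $\{l^2\}$. For Sol monodromy $\phi$ has irrational real eigenvalues, so only $d=\pm1$ can give nonzero degree; writing $A=xI+y\phi$ in the centralizer and recording its first-column entries $(p,r)$ gives $\det A=p^2+\tfrac{(d-a)pr}{c}-\tfrac{br^2}{c}$, the integrality of $A$ (for $d=1$, and for the coset of solutions of $A\phi=\phi^{-1}A$) producing the stated ``either $\dots$ or $\dots$'' conditions, and since a real-quadratic norm form is indefinite the $d=-1$ contribution adds nothing new. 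In every case the listed values are realized by simply writing down the relevant endomorphism, which is automatically realized by a map.

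\emph{Torus semi-bundles.} Here $\pi_1(N_\phi)=\pi_1(N)\ast_{\mathbb{Z}^2}\pi_1(N)$ with $\pi_1(N)$ the Klein-bottle group and $\mathbb{Z}^2$ the $\pi_1$ of the splitting torus; this $\mathbb{Z}^2$ is again characteristic (Fitting subgroup for Sol, canonical fiber-complement torus of the Seifert structure for $E^3$/Nil). In the $E^3$ and Nil cases $N_\phi$ is Seifert fibered over a Euclidean $2$-orbifold, and in all cases it is doubly covered by a torus bundle, so one runs a ``fiber degree times base-orbifold degree'' bookkeeping parallel to the bundle case. The new feature is that $\varphi$ must carry each of the two $I$-bundle pieces $N$ of the amalgam into a conjugate of such a piece; reading this off from the graph-of-groups structure forces the relevant longitudinal multiplicity to be \emph{odd}, which produces the $\{(2l+1)^2\}$ answers, and for Sol geometry it isolates the invariant $\delta(a,d)=ad/\gcd(a,d)^2$: a self-map of degree $\delta(a,d)$ on the transverse direction is available precisely when $\delta(a,d)$ is odd, contributing the extra coset $\{(2l+1)^2\delta(a,d)\}$. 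Once more, the stated values are realized by explicit fiber-preserving maps, or by lifting through the torus-bundle double cover.

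\emph{Main obstacle.} I expect the semi-bundle analysis to be the crux: determining precisely which endomorphisms of $\pi_1(N)\ast_{\mathbb{Z}^2}\pi_1(N)$ are realizable with nonzero degree and what that degree is — i.e.\ controlling how a $\pi_1$-endomorphism must permute and conjugate the vertex groups — and then extracting the parity restriction and the $\delta(a,d)$ dichotomy from that combinatorics. Among the bundle cases the only delicate point is the Sol number theory (the integrality bookkeeping for the coset of solutions of $A\phi=\phi^{-1}A$); the asphericity reduction, the formula $\deg_\#(\varphi)=d\det A$, and the $E^3$ and Nil computations are routine.
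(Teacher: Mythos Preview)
The paper does not prove Theorem~\ref{3}: in Section~1.3 it states simply ``Theorem~\ref{3} is proved in \cite{SWW}'' and moves on. So there is no in-paper proof to compare your proposal against.

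That said, your outline is a coherent and standard plan and is almost certainly in the same spirit as \cite{SWW}. The asphericity reduction to $\operatorname{End}(\pi_1)$, the characteristic-subgroup recognition of the fiber $\mathbb{Z}^2$, the degree formula $\deg_\#(\varphi)=d\cdot\det A$, and the identification of the centralizer of a finite-order $\phi$ with $\mathbb{Z}[\zeta_k]$ (giving the norm forms $p^2-\delta(k)pq+q^2$) are all the natural moves. Your reading of the Sol ``either\dots or\dots'' integrality clauses as coming from the two cosets $A\phi=\phi A$ and $A\phi=\phi^{-1}A$ is the right bookkeeping; note only that the $d=-1$ branch genuinely enlarges the \emph{set of admissible $(p,r)$} even if, as you say, indefiniteness of the norm form means it does not enlarge the set of values after multiplying by $d$.

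Your own assessment of the difficulty is accurate: the semi-bundle side is where the real work is. Concretely, you will need to (i) pin down exactly which endomorphisms of $\pi_1(N)\ast_{\mathbb{Z}^2}\pi_1(N)$ can have nonzero degree---this is where the graph-of-groups rigidity (endomorphisms must send vertex groups into conjugates of vertex groups) has to be made precise---and (ii) extract from the gluing matrix $\phi$ the parity constraint and the appearance of $\delta(a,d)=ad/\gcd(a,d)^2$. Step~(ii) in particular is not just a one-line observation: it requires tracking how an endomorphism of one Klein-bottle group, restricted to the boundary $\mathbb{Z}^2$, must match through $\phi$ with the endomorphism on the other side, and this matching problem is what isolates $\delta(a,d)$ and its parity. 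Your sketch gestures at this but does not yet supply the mechanism; that is the piece you would need to fill in to have a complete proof.
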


To coordinate 3-manifolds in Class 4 and Class 5, we first recall
the well known coordinates of Seifert manifolds.

 Suppose an oriented
3-manifold $M'$ is a circle bundle with a given section $F$, where
$F$ is a compact surface with boundary components $c_1,...,c_n$ with
$n>0$. On each boundary component of $M'$, orient $c_i$ and the
circle fiber $h_i$ so that the product of their orientations match
with the induced orientation of $M'$ (call such pairs $\{(c_i,
h_i)\}$ a section-fiber coordinate system). Now attach $n$ solid
tori $S_i$ to the $n$ boundary tori of $M'$ such that the meridian
of $S_i$ is identified with slope $r_i=c_i^{\alpha_i}h_i^{\beta_i}$
where $\alpha_i>0, (\alpha_i,\beta_i)=1$. Denote the resulting
manifold by $M(\pm
g;\frac{\beta_1}{\alpha_1},\cdots,\frac{\beta_s}{\alpha_s})$ which
has the Seifert fiber structure extended from the circle bundle
structure of $M'$,  where $g$ is the genus of the section $F$ of
$M$, with the sign $+$ if $F$ is orientable and $-$ if $F$ is
nonorientable, here 'genus' of nonorientable surfaces means the
number of $RP^2$ connected summands. Call $e(M)=\sum_1^s
\frac{\beta_i}{\alpha_i}\in\mathbb{Q}$ the Euler number of the
Seifert fiberation.

{\bf Class 4.} If a Nil manifold $M$ is not a torus bundle  or torus
semi-bundle, then $M$ has one of the following Seifert fibreing
structures:
$M(0;\frac{\beta_1}{2},\frac{\beta_2}{3},\frac{\beta_3}{6})$,
$M(0;\frac{\beta_1}{3},\frac{\beta_2}{3},\frac{\beta_3}{3})$,
 or $M(0;\frac{\beta_1}{2},\frac{\beta_2}{4},\frac{\beta_3}{4})$, where $e(M) \in \mathbb{Q}-\{ 0\}$.

\begin{thm}\label{4} For 3-manifold $M$ in Class 4, we have

(1)
$D(M(0;\frac{\beta_1}{2},\frac{\beta_2}{3},\frac{\beta_3}{6}))=\{l^2|l=m^2+mn+n^2,
l\equiv 1 \mod 6, m,n \in \mathbb{Z}\}$;

(2)
$D(M(0;\frac{\beta_1}{3},\frac{\beta_2}{3},\frac{\beta_3}{3}))=\{l^2|l=m^2+mn+n^2,
l\equiv 1 \mod 3, m,n \in \mathbb{Z}\}$;

(3)
$D(M(0;\frac{\beta_1}{2},\frac{\beta_2}{4},\frac{\beta_3}{4}))=\{l^2|l=m^2+n^2,
l\equiv 1 \mod 4, m,n \in \mathbb{Z}\}$.
\end{thm}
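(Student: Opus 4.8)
The plan is to analyze self-maps of these Nil Seifert manifolds through their behavior on the base orbifold and on the fiber direction, reducing the computation of $D(M)$ to a number-theoretic description. Since $M$ is a Seifert fibered space with a unique Seifert structure (as $M$ is Nil and not a torus bundle/semi-bundle), any self-map $f$ of nonzero degree is homotopic to a fiber-preserving map; this is a standard fact for Nil manifolds with hyperbolic-type base orbifold considerations, but here the base is a Euclidean orbifold $S^2(2,3,6)$, $S^2(3,3,3)$, or $S^2(2,4,4)$ with nonzero Euler number. A fiber-preserving self-map induces a map $\bar f$ on the base orbifold and is determined up to the relevant data by a pair: the degree $b$ of $\bar f$ on the base orbifold and the degree $a$ of $f$ restricted to a generic fiber circle, with $\deg(f) = ab$ (with appropriate sign conventions). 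The first main step is therefore to determine which pairs $(a,b)$ can be realized.

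The second step is the constraint coming from the Euler number. For a fiber-preserving map of base-degree $b$ and fiber-degree $a$ between Seifert spaces, one has the relation $b \cdot e(M) = a \cdot e(M)$ up to the covering behavior — more precisely, the Euler number must transform as $e(M) = \frac{a}{b} e(M)$ when $f$ is a genuine fibered self-map compatible with orientations, forcing $a = b$ once $e(M) \neq 0$. Hence $\deg(f) = a^2 = b^2$ is automatically a perfect square, which matches the outer square in all three formulas. So the real content is: which integers $b$ arise as the degree of an orbifold self-map $\bar f$ of the Euclidean triangle orbifold $S^2(p,q,r)$? This is where the number theory enters. An orbifold covering/self-map of $S^2(2,3,6)$ corresponds, via its universal cover $E^2$ tiled by $(2,3,6)$ triangles, to multiplication by an Eisenstein integer; the realizable degrees are exactly the norms $m^2+mn+n^2$ of Eisenstein integers that additionally preserve the cone-point structure, which imposes the congruence $b \equiv 1 \bmod 6$ (so that the three cone points of orders $2,3,6$ map to cone points of the same orders). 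Similarly $S^2(3,3,3)$ gives Eisenstein norms with $b\equiv 1 \bmod 3$, and $S^2(2,4,4)$ gives Gaussian integer norms $m^2+n^2$ with $b \equiv 1 \bmod 4$. I would establish the congruence conditions by tracking where the cone points must go under an orbifold map of nonzero degree and by a local-degree count at the cone points.

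The third step is realization: given $b = m^2+mn+n^2$ (resp. $m^2+n^2$) with the correct congruence, I must exhibit an actual self-map of $M$ of degree $b^2$. For the base orbifold, the linear map of $E^2$ given by the Eisenstein (resp. Gaussian) integer $m+n\omega$ descends to an orbifold self-covering of $S^2(p,q,r)$ precisely because the congruence condition guarantees it normalizes the triangle group and matches cone orders; then one lifts this to a fiber-preserving self-map of $M$ with fiber degree also equal to $b$, using that the Euler number is scaled correctly — here one checks that the obstruction to lifting (living in $H^2$ of the base with coefficients twisted by the fiber) vanishes because $b \cdot e(M) \equiv b^2 \cdot e(M)/b$ works out, or more concretely by building the map on the circle bundle over $F \setminus (\text{cone points})$ and extending over the fibered solid tori using that $b \equiv 1$ modulo the cone orders. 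The converse (that no other degrees occur) combines Step 2's square constraint with the fact that $\deg(f)=0$ is always realizable (a constant map composed appropriately, or a map factoring through a lower-dimensional complex) and that nonzero degrees must be squares of realizable base-degrees.

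The main obstacle I anticipate is the precise bookkeeping in Step 3 — showing that every base-orbifold self-map of the allowed degree actually lifts to $M$ with the matching fiber degree, and conversely that a fiber-preserving self-map cannot have $a \neq b$. The Euler-number argument $b\, e(M) = a\, e(M)$ needs care because $e$ is only defined up to the choice of section and transforms under both the base-degree and the fiber-degree; pinning down the exact formula $e(\bar f^* \text{structure}) = \frac{b}{a} e(M)$ and concluding $a^2 = \deg f$ is the crux. A secondary subtlety is confirming that degree-zero maps exist and that there are no unexpected further constraints mod higher powers of the cone orders (e.g. ruling out that $b \equiv 1 \bmod 6$ must be strengthened), which I would settle by an explicit small example such as $b=7$ for the $(2,3,6)$ case.
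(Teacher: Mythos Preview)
Your overall strategy matches the paper's: reduce to a fiber-preserving situation, use the Euler number to force base-degree $=$ fiber-degree (hence $\deg f = l^2$), identify the possible base-degrees $l$ as Eisenstein/Gaussian norms via the Euclidean triangle-group picture, and pin down the congruence on $l$ by looking at cone points. The paper packages the orbifold analysis as a separate Proposition (computing $D(\mathcal{O})$ and the covering data for $S^2(2,3,6)$, $S^2(3,3,3)$, $S^2(2,4,4)$) and then feeds it into the Seifert argument.

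There is one genuine gap in your setup. You invoke only that a nonzero-degree self-map is homotopic to a \emph{fiber-preserving} map, and then try to extract the relation $a=b$ from an Euler-number identity you yourself flag as delicate. The paper instead uses Wang's result (\cite{Wa}, Cor.~0.4) that on a Nil manifold every nonzero-degree self-map is homotopic to a \emph{covering}; once $g$ is a fiber-preserving covering, $\bar g$ is an honest orbifold self-covering of degree $l$, the fiber degree is a well-defined integer $m$, and the standard covering formula $e(M)=e(M)\cdot l/m$ with $e(M)\neq 0$ gives $l=m$ cleanly. Without the covering upgrade your Euler-number step is not rigorous as written. The congruence $l\equiv 1$ (mod $6,3,4$) is also obtained in the paper not by abstract cone-point tracking but by the concrete observation that in a covering of solid tori the meridian maps to the meridian, forcing $\beta\cdot l=\beta'$ and hence $\gcd(l,\alpha)=1$; coprimality then implies the congruence by the norm-form argument.

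For realization the paper takes a slightly different route than your direct lift: given $l\equiv 1$ of the right form, it first quotients $M$ by the free $\mathbb{Z}_l$-action on fibers to get $M_l=M(0;l\beta_1/\alpha_1,\dots)$ (a degree-$l$ covering $M\to M_l$), and then uses the explicit orbifold covering data $\{[\,\alpha_i,\dots,\alpha_i,1\,]\}$ to build a degree-$l$ covering $M_l\to M$ with fiber degree $1$; the composite has degree $l^2$. This factorization sidesteps the lifting obstruction you were worried about.
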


{\bf Class 5.} All manifolds supporting $H^2\times E^1$ geometry are
Seifert manifolds $M$ such that $e(M)=0$ and the Euler
characteristic of the orbifold $\chi(O_M)<0$.

Suppose
$M=(g;\frac{\beta_{1,1}}{\alpha_1},\cdots,\frac{\beta_{1,m_1}}{\alpha_1},\cdots,
\frac{\beta_{n,1}}{\alpha_n},\cdots,\frac{\beta_{n,m_n}}{\alpha_n})$,
where all the integers $\alpha_i>1$ are different from each other,
and $\sum_{i=1}^n \sum_{j=1}^{m_i} \frac{\beta_{i,j}}{\alpha_i}=0$.

For each $\alpha_i$ and each $a\in U_{\alpha_i}$, define
$\theta_a(\alpha_i)=\#\{\beta_{i,j}\ |\ p_i(\beta_{i,j})=a\}$ (with
repetition allowed), $p_i$ is the natural projection from $\{n\ |\
gcd(n,\alpha_i)=1\}$ to $U_{\alpha_i}$. Define $B_l(\alpha_i)=\{a\
|\ \theta_a(\alpha_i)=l\}$ for $l=1,2,\cdots$, there are only
finitely many $l$ such that $B_l(\alpha_i)\ne \emptyset$. Let
$C_l(\alpha_i)=\{b\in U_{\alpha_i}\ |\ ab\in B_l(\alpha_i),\ \forall
a \in B_l(\alpha_i)\}$ if $B_l(\alpha_i)\ne \emptyset$ and
$C_l(\alpha_i)=U_{\alpha_i}$ otherwise. Finally define
$C(\alpha_i)=\bigcap_{l=1}^{\infty} C_l(\alpha_i)$, and
$\bar{C}(\alpha_i)=p_i^{-1}(C(\alpha_i))$.

\begin{thm}\label{5} $D(M(g;\frac{\beta_{1,1}}{\alpha_1},\cdots,\frac{\beta_{1,m_1}}{\alpha_1},\cdots,\frac{\beta_{n,1}}{\alpha_n},\cdots,\frac{\beta_{n,m_n}}{\alpha_n}))=\bigcap_{i=1}^n
\bar{C}(\alpha_i).$
\end{thm}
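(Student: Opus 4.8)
The plan is to reduce an arbitrary non-zero degree self-map to a fiber-preserving one covering a self-diffeomorphism of the base orbifold $O_M$, and then to translate the existence of such a map into the stated congruence conditions on the Seifert invariants modulo the $\alpha_i$.

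First, since $H^2\times E^1$ is contractible, $M$ is aspherical, so any $f\colon M\to M$ with $d:=\deg f\neq 0$ is $\pi_1$-injective and $[\pi_1(M):f_*\pi_1(M)]=|d|$. The fiber subgroup $\langle h\rangle$ (generated by the regular fiber $h$) is the center of $\pi_1(M)$, hence characteristic, so $f_*(h)=h^q$ with $q\neq 0$; moreover $f_*\pi_1(M)\cap\langle h\rangle=\langle h^q\rangle$, because an element of $\pi_1(M)$ whose $f_*$-image is central must itself be central, hence a power of $h$. Consequently $f_*$ descends to an injection $\bar f_*\colon\Gamma\to\Gamma$ of the base orbifold group $\Gamma=\pi_1^{orb}(O_M)$, and $|d|=|q|\cdot[\Gamma:\bar f_*\Gamma]$. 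Since $\chi_{orb}$ is an isomorphism invariant of a cocompact Fuchsian group and is multiplicative under finite-index inclusion, $[\Gamma:\bar f_*\Gamma]\,\chi(O_M)=\chi(O_M)<0$ forces $[\Gamma:\bar f_*\Gamma]=1$, so $\bar f_*$ is an automorphism and $|d|=|q|$. An automorphism of $\Gamma$ permutes the conjugacy classes of maximal finite cyclic subgroups, i.e. the cone points, preserving their orders, so there are permutations $\sigma_i\in S_{m_i}$ of the order-$\alpha_i$ cone points. By the homotopy theory of (large) Seifert fiber spaces, $f$ is then homotopic to a fiber-preserving map covering a self-diffeomorphism $f_O$ of $O_M$ realizing $(\sigma_i)_i$, with fiber degree $q$ and a well-defined base sign $\epsilon\in\{\pm1\}$ (the common local degree of $f$ in the base-normal direction, which is constant because $\deg f$ is a single integer), so that $d=q\epsilon$.

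Next I would carry out the local analysis at the exceptional fibers. In the standard Seifert presentation of $\pi_1(M)$ the $j$-th cone point of order $\alpha_i$ contributes a loop $x_{i,j}$ with $x_{i,j}^{\alpha_i}h^{\beta_{i,j}}=1$. Because $f_O$ is a genuine diffeomorphism, near a cone point it acts on the link as $\pm1$, so $\bar f_*(\bar x_{i,j})$ is conjugate to $\bar x_{i,\sigma_i(j)}^{\,\epsilon}$; raising $f_*(x_{i,j})$ to the power $\alpha_i$ and using $f_*(h)=h^q$ and the corresponding relation at $\sigma_i(j)$ yields $q\beta_{i,j}\equiv\epsilon\,\beta_{i,\sigma_i(j)}\pmod{\alpha_i}$. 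Since $\gcd(\alpha_i,\beta_{i,\sigma_i(j)})=1$, this forces $\gcd(d,\alpha_i)=\gcd(q,\alpha_i)=1$; multiplying by $\epsilon$ and using $q=d\epsilon$ gives the clean necessary condition $d\,\beta_{i,j}\equiv\beta_{i,\sigma_i(j)}\pmod{\alpha_i}$ for all $i,j$. Conversely, given $d$ for which such permutations $\sigma_i$ exist, I would realize $(\sigma_i)_i$ by a single self-diffeomorphism $f_O$ of $O_M$ (the mapping class group of a closed $2$-orbifold surjects onto the product of symmetric groups on equal-order cone points, for any genus and orientability type), set $q=d\epsilon$ with $\epsilon$ the base sign of $f_O$, and assemble a fiber-preserving map: over $O_M$ minus the cone points one uses that every circle bundle over a surface with boundary is trivial, and over each exceptional solid torus the already-defined boundary map extends precisely because $d\beta_{i,j}\equiv\beta_{i,\sigma_i(j)}\pmod{\alpha_i}$ holds; the only remaining global obstruction is an Euler-number discrepancy, which vanishes since $e(M)=0$. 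The resulting map has degree $q\epsilon=d$.

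Finally I would match this with the stated answer. For fixed $i$ with $\gcd(d,\alpha_i)=1$, a permutation $\sigma_i$ with $d\beta_{i,j}\equiv\beta_{i,\sigma_i(j)}\pmod{\alpha_i}$ exists if and only if multiplication by $d$ preserves the multiset $\{\beta_{i,j}\bmod\alpha_i\}_j$, i.e. $\theta_a(\alpha_i)=\theta_{da}(\alpha_i)$ for every $a\in U_{\alpha_i}$, i.e. multiplication by $[d]$ preserves every level set $B_l(\alpha_i)$, i.e. $[d]\in C(\alpha_i)$, i.e. $d\in\bar C(\alpha_i)$; intersecting over $i$ gives $D(M)=\bigcap_{i=1}^n\bar C(\alpha_i)$. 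The main obstacle I expect is the third paragraph: pinning down, via the fibered-solid-torus models and the patching over $O_M$, the exact congruence $q\beta_{i,j}\equiv\epsilon\,\beta_{i,\sigma_i(j)}\pmod{\alpha_i}$ with the correct uniform sign $\epsilon=\pm1$ (equivalently, that the induced base map is an honest diffeomorphism acting by $\pm1$ on cone links, which is where the Seifert-fiber-space homotopy theory is essential), and checking that no obstruction beyond $e(M)=0$ arises when reassembling the map; the first-paragraph reduction and the final combinatorial translation are routine.
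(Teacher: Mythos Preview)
Your proposal is correct and follows essentially the same outline as the paper's proof: reduce to a fiber-preserving map whose induced base map is a degree-$1$ self-covering (i.e.\ an isomorphism) of $O_M$, read off the congruences $d\beta_{i,j}\equiv\beta_{i,\sigma_i(j)}\pmod{\alpha_i}$, and translate these into the combinatorial condition $d\in\bigcap_i\bar C(\alpha_i)$.

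The main difference is in how the reduction is obtained. The paper invokes \cite[Cor.~0.4]{Wa} as a black box: any nonzero-degree self-map of such an $M$ is homotopic to an honest covering, which (by uniqueness of the Seifert structure) may be isotoped to be fiber-preserving; the $\chi(O_M)<0$ condition then forces the base covering to be $1$-sheeted. For the converse the paper simply observes that the fiberwise $\mathbb{Z}_d$-quotient $M'=M(g;\,d\beta_{i,j}/\alpha_i)$ is homeomorphic to $M$ exactly under the stated permutation condition, giving a degree-$d$ covering $M\to M'\cong M$. Your argument instead derives the fiber-preserving structure from asphericity and the algebra of $\pi_1(M)$, and assembles the converse map by hand over $O_M$; this is more work but has the virtue of being self-contained and of handling the sign $\epsilon$ (and hence negative degrees) explicitly.

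One small correction: the assertion that $\langle h\rangle$ equals the \emph{center} of $\pi_1(M)$ is only valid when the base orbifold is orientable; for non-orientable base some generators conjugate $h$ to $h^{-1}$. However $\langle h\rangle$ is still the unique maximal infinite cyclic normal subgroup (since $\chi(O_M)<0$), hence characteristic, so your conclusion $f_*(h)=h^{\pm q}$ and the rest of the argument go through unchanged.
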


\subsection{A brief comment of the topic and organization of the paper}

Theorem 1.0 was appeared in \cite{Wa}. The proof of the "only if"
part in Theorem 1.0 is based on the results on simplicial volume
developed by Gromov, Thurston and Soma (see \cite{So}), and various
classical results by others on 3-manifold topology and group theory
(\cite{He}, \cite{SW}, \cite{R}). The proof of "if" part in Theorem
1.0 is a sequence elementary constructions, which were known before,
for example see \cite{HL} for (3). That graph manifolds admits no
self-maps of degrees $>1$ also follows from  a recent work
\cite{De}.

The table in Theorem \ref{1} is quoted from \cite{Du}, which
generalizes the earlier work \cite{HKWZ}. The statement below quoted
from \cite{HKWZ}  will be repeatedly used  in this paper.

\begin{prop}\label{6}
For $3$-manifold $M$ supporting $S^3$ geometry,
$$D_{iso}(M)=\{k^2+l|\pi_1(M)|\ |\ gcd(k,|\pi_1(M)|)=1\}.$$
\end{prop}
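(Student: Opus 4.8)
The plan is to prove the two inclusions separately, working with the explicit list of $S^3$-geometry groups recalled in the paper. First I would fix $M$ with $\pi_1(M)$ one of the eight types, set $n = |\pi_1(M)|$, and recall the standard fact (from the classification of free linear $S^3$-actions, e.g.\ \cite{Or} or \cite{Sc}) that $M = S^3/\pi_1(M)$ is a spherical space form; in particular $M$ is a closed oriented $3$-manifold with $\pi_2 = 0$ and $\pi_1$ finite, so $\widetilde{M} = S^3$ and $M$ is a $K(\pi,1)$ only in the trivial sense — the key point is that a self-map $f$ inducing an isomorphism on $\pi_1$ lifts to a self-map $\tilde f$ of $S^3$, and $\deg f = \deg \tilde f$. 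That already shows $D_{iso}(M)$ is determined by degrees of equivariant-up-to-automorphism self-maps of $S^3$.

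For the inclusion $D_{iso}(M) \subseteq \{\,k^2 + l n : \gcd(k,n)=1\,\}$, I would argue on homology with $\mathbb{Z}_n$ coefficients, or more cleanly via the transfer/covering argument: an isomorphism $\varphi = f_* \colon \pi_1(M)\to \pi_1(M)$ together with the lift $\tilde f\colon S^3 \to S^3$ of degree $d := \deg f$ gives, on the level of the chain complex of $\widetilde M = S^3$ as a $\mathbb{Z}[\pi_1(M)]$-module, a $\varphi$-equivariant chain map; comparing the induced maps on $H_3(S^3) = \mathbb{Z}$ and on the $4$-periodic resolution one finds $\deg \tilde f \equiv (\text{something})^2 \pmod n$. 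The cleanest route is: the map $f$ on $H^2(M;\mathbb{Z}_n) \cong H^2(\pi_1(M);\mathbb{Z}_n)$ and the cup-product structure force $\deg f$ to be congruent mod $n$ to a square of a unit, because $H^*(M;\mathbb{Z}_n)$ for a spherical space form is generated in degree dividing the period and the fundamental class pairs a generator with its "$k$-th power"; an isomorphism on $\pi_1$ sends a generator of $H^2$ to a unit multiple $k$, and evaluating against $[M]$ multiplies by $k^2$, so $d \equiv k^2$ with $k$ a unit mod $n$. For the reverse inclusion, given $k$ coprime to $n$, I would construct a self-map realizing degree $k^2 + ln$ for every $l$: take the obvious degree-$n$ "pinch onto a higher lens/space-form structure" or, more robustly, use that $M = S^3/G$ and the diagonal-type maps $S^3 \to S^3$ of the form $z \mapsto$ (a $G$-equivariant power map after identifying $S^3$ with the join or with $SU(2)$) descend to self-maps of $M$ of degree $k^2$; composing with a degree-$(1+l|\pi_1|/k^2)$ adjustment — more precisely, using that any multiple of $n$ can be added by a standard "connect-sum with a degree-$ln$ self-map collapsing to a point and re-expanding" — yields all of the stated arithmetic progression.

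The main obstacle I expect is the lower bound (realization): one must produce, for each unit $k \bmod n$, an actual self-map of the specific space form $S^3/G$ of degree exactly $k^2$, uniformly across all eight group types, and then add arbitrary multiples of $n$. For the cyclic case this is classical (lens space self-maps $L(n,q)\to L(n,q)$ of degree $k^2$ exist iff $k^2 q \equiv q$ has the right form, handled in \cite{HKWZ}); for the non-cyclic types one leans on the fact that $G \subset SU(2) = S^3$ and that $k$-th power maps on a maximal torus, suitably symmetrized, commute with conjugation by $G$ when $\gcd(k,|G|)=1$, giving equivariant self-maps of $S^3$ of degree $k^2$. Adding $n\mathbb{Z}$ is then the easy step: precompose a degree-$d$ self-map with a degree-one map that has been modified by a self-map of $S^3$ of degree $1 + (ln/?)$ supported in a ball — or simply cite that $\{0\}\cup(1 + n\mathbb{Z})$-type slack always appears because one can collapse $M$ to a wedge $S^3 \vee (\text{lower skeleton})$ and remap. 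Since the statement is attributed to \cite{HKWZ}, I would in practice quote their argument for the hard direction and only re-derive the mod-$n$ square constraint, which is the conceptually transparent part.
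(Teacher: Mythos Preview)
The paper does not prove this proposition at all: it is simply quoted from \cite{HKWZ} (see the sentence immediately preceding Proposition~\ref{6}, ``The statement below quoted from \cite{HKWZ} will be repeatedly used in this paper''). So there is no ``paper's own proof'' to compare against; you correctly anticipated this at the end of your sketch.

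That said, a few remarks on your outline, since you went further than the paper does. Your argument for the inclusion $D_{iso}(M)\subseteq\{k^2+l n:\gcd(k,n)=1\}$ via ``$f^*$ on $H^2(M;\mathbb{Z}_n)$ is multiplication by a unit $k$, hence on $H^3$ by $k^2$'' is clean for lens spaces, where $H^1(M;\mathbb{Z}_p)\cong\mathbb{Z}_p$ and the Bockstein plus cup product give the multiplicative structure. For the non-cyclic groups $D^*_{4n},T^*_{24},\ldots$ this step is not automatic: $H^1(M;\mathbb{Z}_n)=\mathrm{Hom}(G,\mathbb{Z}_n)$ need not be cyclic, and the assertion that every automorphism of $G$ acts on $H_3(G;\mathbb{Z})\cong\mathbb{Z}_{|G|}$ by a square of a unit is a genuine group-cohomological computation, not a formal consequence of the cup-product structure. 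This is precisely where \cite{HKWZ} does case-by-case work. Your realization sketch (equivariant power maps on $S^3=SU(2)$) is morally right but similarly requires checking that for every unit $k$ mod $|G|$ some automorphism of $G$ realizes $k^2$ on $H_3$; again this is group-by-group. The ``add $n\mathbb{Z}$'' step, by contrast, is straightforward: pinch off a $3$-ball and compose with a degree-$(1+ln)$ map $S^3\to S^3$ that is $G$-equivariant (such maps exist since $1+ln\equiv 1\pmod{|G|}$).

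In short: your plan is the right shape and matches what \cite{HKWZ} actually does, but the two places you flag as ``conceptually transparent'' are exactly the places that need the case analysis, and your write-up underestimates that.
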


 The topic of mapping degrees between (and to) 3-manifolds covered by $S^3$
 has been discussed for long times.  We mention several
papers: in very old papers \cite {Rh} and \cite{Ol}, the degrees of
maps between any give pairs of lens spaces are obtained by using
equivalent maps between spheres; in \cite{HWZ},  $D(M, L(p,q))$ can
be computed for any 3-manifold $M$; and in a very recent one
\cite{MP}, an algorithm (or formula) is given to the degrees of maps
between given pairs of 3-manifolds covered by $S^3$ in term of their
Seifert invariants.

Theorem \ref{3} is proved in \cite{SWW}.

Theorems \ref{2}, \ref{4} and \ref{5} will be proved in Sections 3,
4, and 5 respectively in this paper. In Section 2 we will compute
$D(M)$ for some concrete 3-manifolds using Theorems 1-5. We will
also discuss when $-1\in D(M)$ and when $-1\in D(M)$ implies that
$M$ admits orientation reversing homeomorphisms.

All terminologies not defined are standard, see \cite{He}, \cite{Sc}
and \cite{IR}.

\section{Examples of computation, orientation reversing homeomorphisms}

{\bf Example 2.1} Let $M=(P\#\bar{P})\#(L(7,1)\#L(7,2)\#2L(7,3))$,
where $P$ is the Poincare homology three sphere.

By Theorem \ref{2} (2), Proposition \ref{6} and the fact
$|\pi_1(P)|=120$,  we have $D(P\#\bar{P})=D_{iso}(P)\cup
(-D_{iso}(P))=\{120n+i\ |\ n\in \mathbb{Z},\ i=1,49, 71,119\}$.

Now we are going to calculate $D((L(7,1)\#L(7,2)\#2L(7,3))$
following the notions of Theorem \ref{2} (3). Clearly
$U_7=\{1,2,3,4,5,6\}$ and $U_7^2=\{1,2,4\}$. Then $U_7/U_7^2=\{a_1,
a_2\}$, where $a_1=\bar 1$ and $a_2=\bar 3$; $U_7=\{A_1\cup A_2\}$,
where $A_1=U_7^2, A_2=3U_7^2$; $\#\bar A_1=2$ and $\#\bar A_2=2$;
$B_2=\{\bar 1, \bar 3\}$, $B_l=\emptyset$ for $l\ne 2$. Since
$U_7/U_7^2=B_2$, we have $C_2=B_2$ and also $C_l=U_7/U_7^2$ for
$l\ne 2$; then  $C=\bigcap_{l=1}^{\infty} C_l=U_7/U_7^2$. Then for
the natural projection $H: \{n\in \mathbb{Z}\ |\ gcd(n,p)=1\}\to
U_7/U_7^2$, $H^{-1}(C)$ are all number coprime to $7$, hence we have
$D_{iso}((L(7,1)\#L(7,2)\#2L(7,3))=\{l\in \mathbb{Z}\ |\
gcd(l,7)=1\}$ by Theorem \ref{2} (3).

Finally by Theorem \ref{2} (1), we have $D(M)=\{120n+i\ |\ n\in
\mathbb{Z},\ i=1,49,71,119\}\bigcap \{l\in \mathbb{Z}\ |\
gcd(l,7)=1\}= \{840n+i\ |\ n\in\mathbb{Z},\ i=1,71, 121,169,191,
239, 241,289, 311, 359, 361,409, $ $431, 479, 481,529, 551, 599,
601,649, 671, 719, 769, 839.\}$. Note $-1\in D(M)$.

\vskip 0.3 true cm

{\bf Example 2.2} Suppose
$M=(2P\#\bar{P})\#(L(7,1)\#L(7,2)\#L(7,3))$.

Similarly by Theorem \ref{2} (2), Proposition \ref{6} and
$|\pi_1(P)|=120$, we have $D(2P\#\bar{P})=D_{iso}(P)=\{120n+i\ |\
n\in \mathbb{Z},\ i=1,49\}$.

To calculate $D(L(7,1)\#L(7,2)\#2L(7,3))$, we have $U_7$, $U_7^2$,
$U_7/U_7^2=\{a_1, a_2\}$, $U_7=\{A_1, A_2\}$ exactly as last
example. But then $\#\bar A_1=2$ and $\#\bar A_2=1$; $B_1=\{ \bar
3\}$, $B_2=\{ \bar 1\}$, $B_l=\emptyset$ for $l\ne 1, 2$. Moreover
$C_1=C_2=\{ \bar 1\}$, and $C_l=U_7/U_7^2$ for $l\ne 1, 2$; then
$C=\bigcap_{l=1}^{\infty} C_l=\{ \bar 1\}$, and $H^{-1}(C)=\{7n+i\
|\ n\in \mathbb{Z},\ i=1,2,4\}$. Hence we have
$D_{iso}(\#(L(7,1)\#L(7,2)\#L(7,3))=\{7n+i\ |\ n\in \mathbb{Z},\
i=1,2,4\}$ by Theorem \ref{2} (3).

By Theorem \ref{2} (1), $D(M)=\{120n+i\ |\ n\in \mathbb{Z},\
i=1,49\}\bigcap \{7n+i\ |\ n\in \mathbb{Z},\ i=1,2,4\}=\{840n+i\ |\
n\in\mathbb{Z},\ i=1,121,169,289,361,529.\}$ Note $-1\notin D(M)$.

\vskip 0.3 true cm

{\bf Eaxmple 2.3} By Theorem  \ref{3}, for the torus bundle
$M_\phi$, $\phi=\left(
                           \begin{array}{cc}
                             2 & 1 \\
                             1 & 1 \\
                           \end{array}\right)$, among the first 20
                            integers $>0$, exactly $1, 4, 5, 9,
                           11 , 16, 19, 20\in D(M_\phi)$.
\vskip 0.3 true cm

{\bf Example 2.4} For Nil 3-manifold
$M=M(0;\frac{\beta_1}{2},\frac{\beta_2}{3},\frac{\beta_3}{6})$,
$D(M)=\{l^2|l=m^2+mn+n^2, l\equiv 1 \mod 6, m,n \in \mathbb{Z}\}$.
The numbers in $D(M)$ smaller than $10000$ are exactly
$1,49,169,361,625,961,1369,1849,2401,3721, 4489, 5329, 6241, 8291,
9409$. Since all $l=6k+1,\ k\in\mathbb{N}$ with $l^2\le 10000$ can
be presented as $m^2+mn+n^2$ except  $l=55, 85$ (if $5|m^2+mn+n^2$,
then $5|(2m+n)^2+3n^2$, therefore $5|2m+1$ and $5|n$, it follows
that $25|m^2+mn+n^2$).

\vskip 0.3 true cm

{\bf Example 2.5} For $\mathbb{H}^2\times \mathbb{E}^1$ manifold
$M=M(2;\frac{1}{5},\frac{1}{5},-\frac{2}{5},\frac{1}{7},\frac{2}{7},-\frac{3}{7})$,
we follow the notions in Theorem \ref{5} to calculate $D(M)$.

First we have $U_5=\{1,2,3,4\}$ with indices $\theta_a(5)$ are
$\{2,0,1,0\}$ respectively. Then $B_1(5)=\{3\}$, $B_2(5)=\{1\}$,
$B_l(5)=\emptyset$ for $l\ne 1,2$ and $C_1(5)=C_2(5)=\{1\}$. Hence
$C(5)=\bigcap_{l=1}^{\infty} C_l(5)=\{  1\}$. Hence $\bar
C(5)=\{5n+1\ |\ n\in \mathbb{Z}\}.$

Similarly $U_7=\{1,2,3,4,5,6\}$ with indices $\theta_a(7)$ are
$\{1,1,0,1,0,0\}$ respectively. Then $B_1(7)=C_1(7)=\{1,2,4\}$.
$B_l(7)=\emptyset$ and $C_l(7)=U_7$  for $l\ne 1$. Hence
$C(7)=\bigcap_{l=1}^{\infty} C_l(7)=\{ 1,2,4\}$.$\bar C(7)=\{7n+i\
|\ n\in\mathbb{Z},\ i=1,2,4\}$.

Finally $D(M)=\{5n+1\ |\ n\in \mathbb{Z}\} \bigcap \{7n+i\ |\
n\in\mathbb{Z},\ i=1,2,4\}=\{35n+i\ |\ n\in\mathbb{Z},\
i=1,11,16\}.$

\vskip 0.3 true cm

{\bf Example 2.6} Suppose $M$ is a 3-manifold supporting $S^3$
geometry. By Proposition \ref{6}, $M$ admits degree $-1$ self
mapping if and only if there is integer number $h$, such that
$h^2\equiv -1\ mod\ \pi_1(M)$. Then we can prove that if $M$ is not
a lens space, $-1\notin D(M)$, (see proof of Proposition \ref{m=n}).
With some further topological and number theoritical arguments, the
follow results were proved in \cite{Sun}.

(1) There is a degree $-1$ self map on $L(p,q)$, but no orientation
reversing homeomorphism on it if and only if $(p,q)$ satisfies:
$p\nmid q^2+1$, $4\nmid p$ and all the odd prime factors of $p$ are
the $4k+1$ type.

(2) Every degree $-1$ self map on $L(p,q)$ are homotopic to an
orientation reversing homeomorphism if and only if $(p,q)$
satisfies: $q^2 \equiv -1\ \text{mod}\ p$,
$p=2,p_1^{e_1},2p_1^{e_1}$, where $p_1$ is a $4k+1$ type prime
number. \vskip 0.3 true cm

{\bf Example 2.7} Suppose $M$ is a torus bundle. Then any non-zero
degree map is homotopic to a covering (\cite{Wa} Cor 0.4). Hence if
$-1\in D(M)$, then $M$ admits an orientation reversing self
homeomorphism.

(1) For the torus bundle $M_\phi$, $\phi=\left(
                           \begin{array}{cc}
                             2 & 1 \\
                             1 & 1 \\
                           \end{array}\right)$,  $-1 \in D(M_\phi)$.
Indeed for $\phi=\left(\begin{array}{cc}
                             a & b \\
                             c & d \\
                           \end{array}\right)$, if  $|a+d|=3$,
                           then $-1 \in
                           D(M_\phi)$.
Since $p^2+\frac{d-a}{b} pr-\frac{c}{b} r^2=-1$ has solution
$p=1-d$, $r=b$
             when $a+d=3$, and solution $p=-1-d$, $r=b$ when $a+d=-3$.

(2) For the torus bundle $M_\phi$, $\phi=\left(
                           \begin{array}{cc}
                             2 & 3 \\
                             1 & 2 \\
                           \end{array}\right)$,  $-1 \notin D(M_\phi)$.
Indeed for $\phi=\left(\begin{array}{cc}
                             a & b \\
                             c & d \\
                           \end{array}\right)$, if  $a+d\pm 2$ has prime decomposition $p_1^{e_1}...p_n^{e_n}$ such that
                           $p_i=4l+3$ and $e_i=2m+1$ for some $i$,
                            then $-1 \notin
                           D(M_\phi)$. Since if the equation $p^2+\frac{d-a}{b} pr-\frac{c}{b}
                           r^2=-1$ has integer solution, then
                           $\frac{((a+d)^2-4)r^2-4b^2}{b^2}$ should
                           be a square of rational number. That is $((a+d)^2-4)r^2-4b^2=s^2$ for some integer
                           $s$.
           Therefore $(a+d+2)(a+d-2)r^2$ is a sum of two squares. By
           a fact in elementary number theory,
            neither $a+d+2$ nor $a+d-2$ has $4k+3$ type prime factor with odd power (see  page 279, \cite{IR}.)

\section{$D(M)$ for connected sums}
\subsection{Relations between $D_{iso}(M_1\# M_2)$ and $\{D_{iso}(M_1),
D_{iso}(M_2)\}$}

In this section, we consider the manifolds $M$ in Class 2: $M$ has
non trivial prime decomposition, each connected summand has finite
or infinite cyclic fundamental group, and $M$ is not homeomorphic to
$RP^3\#RP^3$. (Note for each geometrizable 3-manifold $P$,
$\pi_1(P)$ is finite if and only if $P$ is $S^3$ 3-manifold, and
$\pi_1(P)$ is infinite cyclic if and only if $P$ is $S^2\times E^1$
3-manifold.)

Since each $S^3$ 3-manifold $P$  is covered by $S^3$, we assume $P$
has the canonical orientation induced by the canonical orientation
on $S^3$.  When we change the orientation of $P$, the new oriented
$3$-manifold is denoted by $\bar{P}$. Moreover, lens space $L(p,q)$
is orientation reversed homeomorphic to $L(p,p-q)$, so we can write
all the lens spaces connected summands as $L(p,q)$. Now we can
decompose the manifold as
$$M=(mS^2\times
S^1)\#(m_1P_1\#n_1\bar{P_1})\#\cdots\#(m_sP_s\#n_s\bar{P_s})$$
$$\#(L(p_1,q_{1,1})\#\cdots\#L(p_1,q_{1,r_1}))\#\cdots\#(L(p_t,q_{t,1})\#\cdots\#L(p_t,q_{t,r_t})),$$
where all the $P_i$ are $3$-manifolds with finite fundamental group
different from lens spaces, all the $P_i$ are different with each
other, and all the positive integer $p_i$ are different from each
other. We will use this convention in this section.

Suppose $F$ (resp. $P$) is a properly embedded surface (resp. an
embedded 3-manifold) in a 3-manifold $M$.  We use $M\setminus F$
(resp. $M\setminus P$) to denote the resulting manifold obtained by
splitting $M$ along $F$ (resp. removing $\text{int} P$, the interior
of $P$).

The definitions below are quoted from \cite{R}:

\begin{defn}
Let $M, N$ be 3-manifolds and $B_f=\bigcup_i(B_i^+\cup B_i^-)$ is a
finite collection of disjoint 3-ball pairs in $int M$. A map $f:
M\setminus B_f\rightarrow N$ is called an {\it almost defined map}
from $M$ to $N$ if for each $i$, $f|_{\partial B_i^+}=f|_{\partial
B_i^-}\circ r_i$ for some orientation reversing homeomorphism $r_i:
\partial B_i^+\rightarrow
\partial B_i^-$. If identifying $\partial B_i^+$ with $\partial
B_i^-$ via $r_i$, we get a quotient closed manifold $M(f)$, and $f$
induces a map $\tilde{f}: M(f)\rightarrow N$. We define
$deg(f)=deg(\tilde{f})$.
\end{defn}

\begin{defn}
For two almost defined maps $f$ and $g$, we say that $f$ is {\it
$B$-equivalent} to $g$ if there are almost defined maps $f=f_0,
f_1,\cdots, f_n=g$ such that either $f_i$ is homotopic to $f_{i+1}$
 rel$(\partial B_{f_i}\cup \partial B_{f_{i+1}})$ or $f_i=f_{i+1}$ on
$M\setminus B$ for an union of balls $B$ containing $B_{f_i}\cup
B_{f_{i+1}}$.
\end{defn}

\begin{lem}[\cite{R} Lemma 3.6, \cite{Wa} Lemma 1.11]\label{almost}
Suppose $f: M\rightarrow M$ is a map of nonzero degree and $\bigcup
S_i^2$ is an union of essential 2-spheres. Then there is an almost
defined map $g: M\setminus B_g\rightarrow M$, $B$-equivalent to $f$,
such that $deg(g)=deg(f)$ and $g^{-1}(\bigcup S_i^2)$ is a
collection of spheres.
\end{lem}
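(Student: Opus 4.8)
The plan is to run the standard transversality–and–compression argument. The feature that makes it work is that each $S_i^2$ is a $2$-sphere, so \emph{every} loop in $\mathcal S:=\bigcup_i S_i^2$ is null-homotopic in $\mathcal S$; together with the fact that a manifold in Class~$2$ (a connected sum of copies of $S^2\times S^1$ and of $S^3$-manifolds) contains no incompressible closed surface of positive genus, this will let me kill the positive-genus preimage components one at a time by compressions realised as homotopies of the map.

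First I would make $f$ smooth and homotope it to be transverse to $\mathcal S$. Then $F:=f^{-1}(\mathcal S)$ is a closed surface in $M$, and since $\mathcal S$ is two-sided its normal line bundle is trivial, so $\nu_F=f^{*}\nu_{\mathcal S}$ is trivial; hence $F$ is two-sided and therefore orientable, $f$ sends each component of $F$ into a single $S_i^2$, and $f(M\setminus F)\subset M\setminus\mathcal S$. Now consider the family of all almost defined maps $g$ that are $B$-equivalent to $f$, have $\deg(g)=\deg(f)$, and are transverse to $\mathcal S$; this family is non-empty, as it contains $f$ (with $B_f=\varnothing$). Among them I would choose one minimising the complexity
\[
c(g)=\sum_{F_j}3^{\,\mathrm{genus}(F_j)},
\]
the sum over the components $F_j$ of $g^{-1}(\mathcal S)$; since $c$ takes positive integer values, a minimiser exists. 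I claim that for such a $g$ the set $g^{-1}(\mathcal S)$ is a disjoint union of spheres, which is the assertion. (In fact one may take $B_g=\varnothing$ and $g$ an honest self-map; the almost-defined-map language is retained only because it is what the later lemmas of the paper combine this statement with.)

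To prove the claim, suppose some component $F_0$ of $F:=g^{-1}(\mathcal S)$ has positive genus. By the structure of Class~$2$ manifolds (a positive-genus closed surface in $M$ can be isotoped off the decomposing $2$-spheres into a prime piece, where it cannot be $\pi_1$-injective, see \cite{He}), $F_0$ is not incompressible in $M$; so by the loop theorem together with the usual innermost-disk cleanup — here I use that $F$ is two-sided — there is an embedded disk $D\subset M$ with $D\cap F=\partial D$ and $\partial D$ an essential simple closed curve on a positive-genus component of $F$, which I continue to call $F_0$, with $g(F_0)\subset S_{i_0}^2$. Since $S_{i_0}^2$ is a sphere, $g(\partial D)$ bounds a singular disk in $S_{i_0}^2$; using it I would homotope $g$ inside a neighbourhood of $D$ (chosen so that $g^{-1}(S_{i_0}^2)$ meets it exactly in the annulus $\partial D\times[-1,1]$) to a map $g'$ for which $(g')^{-1}(\mathcal S)$ is the surface obtained from $F$ by surgery along $D$. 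Being an ordinary homotopy, this makes $g'$ $B$-equivalent to $f$ with $\deg(g')=\deg(f)$. If $\partial D$ is non-separating on $F_0$ this drops the genus of that component; if it is separating, $F_0$ is replaced by two closed components whose genera are $\geq 1$ and add up to $\mathrm{genus}(F_0)$. In either case $c(g')<c(g)$, using $3^{a-1}<3^{a}$ and $3^{a}+3^{b}<3^{a+b}$ for $a,b\geq 1$. This contradicts the minimality of $g$, so every component of $g^{-1}(\mathcal S)$ is a sphere.

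The step I expect to be the main obstacle is making the compression move precise: one must set up product neighbourhoods $D\times[-1,1]$ in the domain and $S_{i_0}^2\times[-1,1]$ in the target compatibly with $g$, then check that replacing $\partial D\times[-1,1]$ by two parallel copies of $D$ can be carried out by a homotopy of $g$ that is fixed near the frontier of $D\times[-1,1]$ (extending $g|_{\partial D}$ over $D$ via the singular disk in $S_{i_0}^2$), and that the resulting $g'$ is again transverse to $\mathcal S$ with $(g')^{-1}(\mathcal S)$ altered by exactly that surgery. The remaining ingredients — the normal-bundle computation in the transversality step, the innermost-disk cleanup producing a disk disjoint from \emph{all} of $F$, and the compressibility of positive-genus surfaces in Class~$2$ manifolds — are routine. (Note that the hypothesis that the $S_i^2$ are essential is not actually needed for the conclusion as stated; it is kept because it is the case of interest downstream.)
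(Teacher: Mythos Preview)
The paper does not prove this lemma at all; it is quoted verbatim from \cite{R} Lemma~3.6 and \cite{Wa} Lemma~1.11, so there is no ``paper's own proof'' to compare against. What you have written is a correct argument for the special case the paper actually needs (namely $M$ in Class~2), but it does not establish the lemma as stated.

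The gap is in your compressibility step. You obtain the compressing disk $D$ by invoking the loop theorem in the \emph{domain}, which requires that the positive-genus component $F_0$ be geometrically compressible in $M$. You justify this by appealing to the structure of Class~2 manifolds, but the lemma (as formulated here and in \cite{R}, \cite{Wa}) carries no such hypothesis on $M$: it applies equally when $M$ has prime summands containing incompressible surfaces of positive genus, and in that case $F_0\subset M$ may well be incompressible, so no embedded $D$ exists and your surgery cannot be performed inside $M$. Your parenthetical claim that one may always take $B_g=\varnothing$ is therefore wrong in general. Handling this is precisely the purpose of the almost-defined-map formalism: the only information available in general is that $g|_\gamma$ is null-homotopic in the \emph{target} sphere $S_{i_0}^2$, and Rong's construction uses the second clause of $B$-equivalence (modifying the map on balls, together with the paired balls $B_i^\pm$ to keep the degree unchanged) to realise the compression without ever needing an embedded disk in the domain. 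So for a proof of the cited lemma you should either drop the Class~2 restriction and reproduce Rong's $B$-equivalence move, or else state explicitly that you are proving only the special case required by Theorem~\ref{perm}.
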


\begin{lem}[\cite{Wa} Corollary 0.2]\label{iso} Suppose $M$ is a
geometrizable  3-manifold. Then any nonzero degree proper map $f:
M\rightarrow M$ induces an isomorphism $f_*: \pi_1(M)\rightarrow
\pi_1(M)$ unless $M$ is covered by either a torus bundle over the
circle, or $F\times S^1$ for some compact surface $F$, or the $S^3$.
\end{lem}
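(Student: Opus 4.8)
Since $M$ is closed, properness is automatic, so we are dealing with an honest self-map $f\colon M\to M$ with $d:=\deg f\neq 0$. The plan is to reduce to the case $|d|=1$, dispose of that case for \emph{every} geometrizable $M$, and then show that a degree of absolute value $\ge 2$ can occur only on the listed exceptional manifolds.

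First I would record the standard observation that $\Gamma':=f_*\pi_1(M)$ has finite index in $\Gamma:=\pi_1(M)$, and that this index divides $d$. Indeed, $f$ lifts through the covering $p\colon\widehat M\to M$ with $p_*\pi_1(\widehat M)=\Gamma'$, say $f=p\circ\widehat f$; if $[\Gamma:\Gamma']$ were infinite then $\widehat M$ would be a noncompact $3$-manifold, forcing $H_3(\widehat M;\mathbb Z)=0$, hence $\widehat f_*[M]=0$ and $d=\deg p\cdot\deg\widehat f=0$, a contradiction. So $k:=[\Gamma:\Gamma']<\infty$ and $d=k\,\deg\widehat f$. In particular, when $|d|=1$ we get $k=1$, i.e. $f_*$ is onto; and a geometrizable $3$-manifold group, being finitely generated and residually finite (Hempel, assuming geometrization), is Hopfian, so a surjective endomorphism of it is an isomorphism. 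This settles $|d|=1$ with no hypothesis on $M$ whatsoever (and already explains why, for the Class~2 manifolds of Theorem~\ref{2}, only the degrees of absolute value $\ge 2$ can fail to be $\pi_1$-isomorphisms).

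It remains to treat $|d|\ge 2$; here the goal is to conclude that $M$ lies in the exceptional list. The fastest route is to apply Theorem 1.0: after replacing $f$ by $f\circ f$ if necessary we may assume $d>1$, and Theorem 1.0 then places $M$ among its families (a)--(c), that is, $M$ is covered by a torus bundle (family (a): $E^3$, Nil, Sol manifolds, including torus semibundles, which are doubly covered by torus bundles, and the non-bundle Nil Seifert manifolds, finitely covered by circle bundles over tori), or covered by $F\times S^1$ (family (b): $H^2\times E^1$), or has every prime factor covered by $S^3$ or $S^2\times E^1$ (family (c)) --- which is exactly the exceptional list of the statement. A proof not quoting Theorem 1.0 would instead run through Gromov's simplicial volume: since $\|\cdot\|$ is a homotopy invariant, multiplicative under finite covers, and satisfies $|d|\,\|M\|\le\|M\|$ for any self-map, $|d|\ge 2$ forces $\|M\|=0$, so by the Gromov--Soma additivity/gluing formulas no JSJ piece of any prime summand of $M$ is hyperbolic, and $M$ is a connected sum of graph manifolds, $S^3$-manifolds and copies of $S^2\times S^1$. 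One then analyzes these remaining cases using the canonical decompositions (prime, then JSJ, then Seifert fibering): a nonzero degree map carries incompressible tori to incompressible tori up to homotopy and respects Seifert fiberings, hence descends to a self-map of the base $2$-orbifold, where the orbifold Euler characteristic and the Euler number of the fibration (and their behaviour under finite covers) obstruct degree $>1$ unless the base is Euclidean or spherical with vanishing Euler number, or a torus-bundle-type monodromy is present --- precisely the exceptional cases; for a nontrivial connected sum one first uses Lemma~\ref{almost} to make $f$ almost defined with essential spheres pulling back to spheres, cuts along them, and reduces to the prime pieces, checking that a non-exceptional such $M$ must contain a prime graph-manifold summand with nonabelian $\pi_1$, which carries no degree $>1$ self-map.

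The genuinely hard part is this last step: showing that a non-exceptional prime Seifert or graph manifold --- and then a non-exceptional connected sum --- admits no self-map of degree $\ge 2$. This is where one needs the full classical $3$-manifold arsenal (incompressible surface theory, the structure of Seifert fiberings and their base orbifold groups, multiplicativity of Euler numbers under covers, and Rong's almost-defined-map technique of Lemma~\ref{almost}) together with the Gromov--Thurston--Soma simplicial-volume estimates that eliminate the hyperbolic pieces outright. I would organize the write-up by handling prime manifolds geometry-by-geometry through Thurston's list first, and only afterwards assembling the connected-sum case --- exactly along the lines of the sources cited in the paper, which is why the authors are content to quote the result here.
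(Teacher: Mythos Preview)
First, note that the paper does not supply its own proof of this lemma; it is quoted directly from \cite{Wa}, Corollary~0.2, so there is nothing to compare against on that front.

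Your argument for $|d|=1$ (finite-index image plus Hopficity via residual finiteness) is fine. The gap is in your treatment of $|d|\ge 2$: you assert that Theorem~1.0 places any such $M$ in the lemma's exceptional list, identifying family~(c) of Theorem~1.0 with ``covered by $S^3$''. This identification is false. Family~(c) consists of all $M$ whose prime summands are each covered by $S^3$ or $S^2\times E^1$, and this includes every Class~2 manifold of the paper --- nontrivial connected sums such as $L(3,1)\#L(5,1)$. Such a connected sum has infinite, virtually free fundamental group, hence is not finitely covered by $S^3$, by any closed $F\times S^1$, or by a torus bundle; it lies \emph{outside} the lemma's exceptional list. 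Yet it does admit self-maps of degree $>1$. For these manifolds the lemma asserts --- and the paper uses, in the proof of Theorem~\ref{perm} --- that \emph{every} nonzero-degree self-map induces a $\pi_1$-isomorphism. Your argument establishes nothing here; indeed your parenthetical remark that for Class~2 manifolds ``only the degrees of absolute value $\ge 2$ can fail to be $\pi_1$-isomorphisms'' flatly contradicts the lemma you are trying to prove. The missing ingredient is the $\pi_1$-\emph{injectivity} theorem that gives \cite{Wa} its title: one must show that a nonzero-degree self-map of such a reducible $M$ cannot kill any element of $\pi_1$, which requires analysing how the map interacts with the sphere decomposition and the free-product structure of $\pi_1(M)$. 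Only after injectivity is established does an Euler-characteristic (or Grushko-rank) count on the virtually free group force the image to have index~$1$, whence your Hopficity argument finishes.
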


The following lemma is well-known.

\begin{lem}\label{suj}
Suppose $M$ is a closed orientable 3-manifold, $f: M\rightarrow M$
is of degree $d\neq 0$. Then $f_*: H_2(M,\mathbb{Q})\rightarrow
H_2(M,\mathbb{Q})$ is an isomorphism.
\end{lem}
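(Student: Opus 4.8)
The plan is to derive the statement from Poincar\'e duality together with the projection formula for the cap product, reducing everything to the elementary fact that an injective endomorphism of a finite-dimensional vector space is an isomorphism.

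First recall that a degree $d$ self-map $f\colon M\to M$ of a closed oriented $n$-manifold satisfies $f_*[M]=d[M]$ in $H_n(M;\mathbb{Q})$, where $[M]$ is the fundamental class. Let $PD\colon H^k(M;\mathbb{Q})\to H_{n-k}(M;\mathbb{Q})$, $\alpha\mapsto\alpha\cap[M]$, be the Poincar\'e duality isomorphism (here orientability and closedness of $M$ are used). The projection formula gives, for every $\alpha\in H^k(M;\mathbb{Q})$,
\[
f_*\bigl(f^*\alpha\cap[M]\bigr)=\alpha\cap f_*[M]=d\,(\alpha\cap[M]),
\]
that is, $f_*\circ PD\circ f^*=d\cdot PD$ as maps $H^k(M;\mathbb{Q})\to H_{n-k}(M;\mathbb{Q})$.

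Now I would run this with $n=3$ and $k=1$ (so $n-k=2$). Since $M$ is a closed manifold, $H^1(M;\mathbb{Q})$ is a finite-dimensional $\mathbb{Q}$-vector space; since $d\neq 0$ and $PD$ is an isomorphism, $d\cdot PD$ is an isomorphism, so the composite $f_*\circ PD\circ f^*$ is injective, which forces $f^*\colon H^1(M;\mathbb{Q})\to H^1(M;\mathbb{Q})$ to be injective. An injective endomorphism of a finite-dimensional vector space is bijective, so $f^*$ is an isomorphism on $H^1(M;\mathbb{Q})$. Then $f_*\circ PD=d\cdot PD\circ(f^*)^{-1}$ is a composite of isomorphisms, hence $f_*\colon H_2(M;\mathbb{Q})\to H_2(M;\mathbb{Q})$ is an isomorphism, as claimed.

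There is no substantial obstacle here; the one point that genuinely needs the hypotheses is the passage from injectivity to bijectivity, which relies on working over the field $\mathbb{Q}$ and on finite-dimensionality — the analogous integral statement is false, as multiplication by $d$ on $H_2(M;\mathbb{Z})$ shows — so the proof must not be attempted with $\mathbb{Z}$ coefficients. One could alternatively phrase the conclusion via the universal coefficient theorem, noting that over $\mathbb{Q}$ the map $f^*$ on $H^2(M;\mathbb{Q})$ is the transpose of $f_*$ on $H_2(M;\mathbb{Q})$, but the argument above is self-contained.
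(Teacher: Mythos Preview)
Your argument is correct: the projection formula plus Poincar\'e duality gives $f_*\circ PD\circ f^*=d\cdot PD$, and over $\mathbb{Q}$ this forces both $f^*$ and $f_*$ to be isomorphisms in the relevant degrees. The paper itself offers no proof of this lemma, merely declaring it ``well-known,'' so there is no approach to compare against; your write-up supplies exactly the standard justification one would expect.
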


\begin{thm}\label{perm}
Suppose $M=M_1\#\cdots\#M_n$ is a non-prime manifold which is not
homeomorphic to $RP^3\#RP^3$. Each $\pi_1(M_i)$ is finite or cyclic,
and $\pi_1(M_i)\ne 0$. If $f: M\rightarrow M$ is a map of degree
$d\neq 0$, then there exists a permutation $\tau$ of
$\{1,\cdots,n\}$, such that there is a map $g_i:
M_{\tau(i)}\rightarrow M_i$ of degree $d$ for each $i$. Moreover,
$g_{i*}$ is an isomorphism on fundamental group.
\end{thm}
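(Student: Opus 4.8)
The plan is to first upgrade $f$ to a $\pi_1$-isomorphism, read off the permutation $\tau$ from the structure of free products, and then realize it geometrically by cutting along $2$-spheres. First I would note that $M$ is reducible: since $n\geq 2$ and every $\pi_1(M_i)$ is nontrivial, the summing $2$-spheres are essential (a side of such a sphere that is a ball would make some $M_i$ a homotopy sphere). Since any closed orientable $3$-manifold covered by a torus bundle over $S^1$, by $F\times S^1$ with $F$ a closed surface, or by $S^3$ is geometric, and the only reducible closed geometric $3$-manifolds are $S^2\times S^1$ and $RP^3\#RP^3$, while $M$ is reducible and by hypothesis not $RP^3\#RP^3$ (nor $S^2\times S^1$, being non-prime), Lemma~\ref{iso} applies and $f_*\colon\pi_1(M)\to\pi_1(M)$ is an isomorphism. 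Now $\pi_1(M)=\pi_1(M_1)*\cdots*\pi_1(M_n)$, each factor being freely indecomposable (a nontrivial finite group, or $\mathbb{Z}$). Put $I_{\mathrm{fin}}=\{i:|\pi_1(M_i)|<\infty\}$ and let $I_\infty$ be its complement, so $M_i\cong S^2\times S^1$ for $i\in I_\infty$. In the Grushko decomposition of $\pi_1(M)$ the infinite cyclic factors merge into the free part, so the nontrivial non-free factors are exactly the $\pi_1(M_i)$ with $i\in I_{\mathrm{fin}}$; by the uniqueness part of Grushko's theorem $f_*$ permutes their conjugacy classes, giving a permutation $\tau_{\mathrm{fin}}$ of $I_{\mathrm{fin}}$ with $f_*(\pi_1(M_{\tau_{\mathrm{fin}}(i)}))$ conjugate to $\pi_1(M_i)$. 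Extend $\tau_{\mathrm{fin}}$ to a permutation $\tau$ of $\{1,\dots,n\}$ arbitrarily on $I_\infty$. For $i\in I_\infty$ one has $M_{\tau(i)}=M_i=S^2\times S^1$, and $\sigma\times\mathrm{id}\colon S^2\times S^1\to S^2\times S^1$ with $\deg\sigma=d$ has degree $d$ and induces the identity on $\pi_1$, so the desired $g_i$ is already at hand; it remains to produce $g_i\colon M_{\tau(i)}\to M_i$ of degree $d$ with $g_{i*}$ an isomorphism for $i\in I_{\mathrm{fin}}$.

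For the latter I would fix a system $\mathcal S=S_1\cup\cdots\cup S_{n-1}$ of disjoint essential $2$-spheres with $M\setminus\mathcal S=\bigsqcup_i M_i'$, $M_i'$ a punctured $M_i$, and invoke Lemma~\ref{almost} to get an almost defined map $g$, $B$-equivalent to $f$, with $\tilde g\colon M(g)\to M$ of degree $d$ and $\tilde g^{-1}(\mathcal S)$ a collection of spheres; here $M(g)\cong M\#k(S^2\times S^1)$. Using the standard sphere-tightening moves, all carried out within the $B$-equivalence class of $f$, I would discard trivial and parallel spheres and then enlarge to a maximal system $\Sigma\supseteq\tilde g^{-1}(\mathcal S)$ of disjoint non-parallel essential spheres; then each component $X$ of $M(g)\setminus\Sigma$ is a punctured prime of $M(g)$, so its cap-off $\hat X$ is one of $M_1,\dots,M_n$, $S^2\times S^1$, or $S^3$ (uniqueness of prime decomposition), and since $X$ is connected with $\tilde g(X)\cap\mathcal S=\varnothing$ it is sent by $\tilde g$ into a single $M_{\rho(X)}'$. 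Van Kampen presents $\pi_1(M(g))$ as the free product of the vertex groups $\pi_1(\hat X)$ with a free group, and $\tilde g_*$ restricts to $f_*$ on the natural retract $\pi_1(M)\leq\pi_1(M(g))$, hence is onto $\pi_1(M)$. Comparing Grushko decompositions and using that a conjugate of one free factor lies in a conjugate of another only when they coincide, I would conclude that for each $i\in I_{\mathrm{fin}}$ there is a unique component $X_i$ with $\hat X_i$ of finite fundamental group mapping into $M_i'$, that $\hat X_i\cong M_{\tau(i)}$, and that $\tilde g_*$ sends $\pi_1(\hat X_i)$ onto a conjugate of $\pi_1(M_i)$ — a surjection of finite groups of equal order, hence an isomorphism.

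It remains to get the degree right. Fixing $i\in I_{\mathrm{fin}}$ and a regular value $y_i\in\mathrm{int}\,M_i'$ and counting preimages with sign gives $\sum_{X:\rho(X)=i}\deg(\widehat{\tilde g|_X}\colon\hat X\to M_i)=d$. Components with $\hat X=S^3$ contribute $0$ since $M_i\neq S^3$; components with $\hat X=S^2\times S^1$ occur only when $\pi_1(M_i)$ is cyclic, i.e.\ $M_i=L(p,q)$ is a lens space, and then each contributes a multiple of $p$, since for any $h\colon S^2\times S^1\to L(p,q)$ the homomorphism $h^*$ kills $H^3(L(p,q);\mathbb{Z}_p)$ (this group is generated by $x\cup\beta x$ with $x\in H^1$, while $\beta\,h^*(x)=0$ because $H^1(S^2\times S^1;\mathbb{Z})$ is torsion-free). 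Thus $\deg(\widehat{\tilde g|_{X_i}}\colon M_{\tau(i)}\to M_i)\equiv d\pmod{|\pi_1(M_i)|}$, and it is the degree of a $\pi_1$-isomorphic map. If $M_i$ is not a lens space there are no auxiliary $S^2\times S^1$ components, so this degree is exactly $d$ and $g_i:=\widehat{\tilde g|_{X_i}}$ is as required. If $M_i=L(p,q)$, then the degrees of $\pi_1$-isomorphic maps $M_{\tau(i)}\to M_i$ form a union of full residue classes modulo $p$ (the classical computation of such degrees between lens spaces with the same $p$, in the spirit of Proposition~\ref{6}), hence this set contains the whole residue class of $\deg(\widehat{\tilde g|_{X_i}})$ and in particular $d$ itself; I would then take $g_i$ to be a $\pi_1$-isomorphic map $M_{\tau(i)}\to M_i$ of degree exactly $d$.

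I expect the hard part to be the middle step: realizing the sphere-tightening that puts $\tilde g^{-1}(\mathcal S)$ into the position described — complementary pieces that are punctured primes and each map into a single summand — entirely within the $B$-equivalence class of $f$. This is exactly where the almost defined map formalism of \cite{R} is needed, since surgering the domain along a $2$-sphere generally forces a passage from $M$ to $M(g)$, and the auxiliary $S^2\times S^1$ and $S^3$ pieces then require careful bookkeeping. The subtle point is that for lens space summands the degree of the restricted map cannot be read off directly: one must observe that the parasitic contributions are multiples of $p$ and then use that the possible degrees of $\pi_1$-isomorphic maps between lens spaces with the same $p$ form whole residue classes mod $p$ in order to recover a map of the exact degree $d$.
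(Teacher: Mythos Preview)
Your strategy is close to the paper's, but there is a real gap in the degree step, and the paper takes a cleaner route that avoids it.

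The gap: you assert that ``components with $\hat X=S^2\times S^1$ occur only when $\pi_1(M_i)$ is cyclic,'' so that for non-lens $M_i$ the restricted map already has degree exactly $d$. This is unjustified and in fact false. Passing to $M(g)\cong M\#k(S^2\times S^1)$ creates $k$ extra $S^2\times S^1$ primes; after cutting along your maximal $\Sigma$ these punctured $S^2\times S^1$ pieces can sit over any $M_i'$, and nonzero-degree maps $S^2\times S^1\to M_i$ exist for \emph{every} spherical $M_i$ (pinch to $S^3$, then compose with the universal covering). Your Bockstein argument, lifted through a cyclic cover, does show any such map has degree divisible by $|\pi_1(M_i)|$, so the congruence $\deg(\widehat{\tilde g|_{X_i}})\equiv d\pmod{|\pi_1(M_i)|}$ survives; but then you need the residue-class adjustment for every $i\in I_{\mathrm{fin}}$, invoking Proposition~\ref{6} (and its analogue for $D_{iso}(P,\bar P)$), not only the lens-space formula. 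This is repairable, but it is not what you wrote.

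The paper sidesteps the whole issue by staying in $M_g=M\setminus B_g$ rather than passing to $M(g)$. The extra ingredient is Lemma~\ref{suj}: since $f_*$ is injective on $H_2(M;\mathbb Q)$ and each decomposing sphere $S_i$ is null-homologous, every component of $g^{-1}(\bigcup S_i)$ is \emph{separating} in $M_g$. Hence $\pi_1(M_g)=\pi_1(M)$ is the free product of the $\pi_1$'s of the complementary pieces, and by uniqueness of free-product decomposition (plus the fact that $M_g$ is punctured $M$) each piece is a punctured prime of $M$ or a punctured $S^3$; no auxiliary $S^2\times S^1$'s ever appear. One then produces $g_i:M_{\tau(i)}\to M_i$ of degree exactly $d$ by capping off boundary spheres (using $\pi_2(M_i)=0$) compatibly with the ball-pair identifications $r_k$, so that the degree of $\tilde g$ is inherited directly, with no modular adjustment needed. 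In short, your detour through $M(g)$ and a maximal sphere system is precisely what creates the parasitic $S^2\times S^1$ contributions you then struggle to control; the $H_2$-separation observation removes them at the source.
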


\begin{proof} Call $M'$ is a punctured $M$, if $M'=M\setminus B$,
where $B$ is a finitely many  disjoint 3-balls in the interior of
$M$. We use $\widehat{M_*}$ to denote the 3-manifold obtained from
$M_*$ by capping off the boundary spheres with 3-balls.

$M$ is obtained by gluing the boundary sphere of $M'_i=M_i\setminus
int(B_i)$ to a $n$-punctured 3-sphere. The image of $\partial B_i$
in $M$, which is denoted by $S_i$, is a separating sphere.

By Lemma \ref{almost}, there is an almost defined map $g: M\setminus
 B_g\rightarrow M$, $B$-equivalent to $f$, such that $g^{-1}(\bigcup
S_i)$ is a collection of spheres and $deg(g)=d$. Let $M_g=M\setminus
 B_g$.

Let $U=M_g\setminus g^{-1}(\bigcup S_i)=\{M_i^{j}| j=1,...,l_i, \,
i=1,...,n\}$. The components of $g^{-1}(M'_i)$ are denoted by
$M^1_i,\cdots,M^{l_i}_i$.

By Lemma \ref{iso}, $f_*: \pi_1(M)\rightarrow \pi_1(M)$ is an
isomorphism. Since $g$ is differ from $f$ just on the 3-balls $B_g$
up to homotopy rel $\partial B_g$, it follows that $g_*:
\pi_1(M\setminus B_g)=\pi_1(M)\rightarrow \pi_1(M)$ is an
isomorphism.

Since the prime decomposition of 3-manifold $M$ is unique, and $M_g$
is just a punctured $M$, each component of $U$ is either a punctured
non-trivial prime factor of $M$, or a punctured 3-sphere.

By Lemma \ref{suj}, $f_*$ is an injection on $H_2(M,\mathbb{Q})$. If
$S_i$ is a separating sphere, then $[S_i]=0$ in $H_2(M,\mathbb{Q})$.
So each component $S'$ of $f^{-1}(S_i)$ is homologic to 0, thus $S'$
separates $M$. By the procession of construction of $g$ (see the
proof of Lemma 3.4, \cite{R}), which is B-equivalent to $f$, each
component $S$ of $g^{-1}(S_i)$ is also a separating sphere in $M_g$.
So $\pi_1(M_g)$ is the free product of the $\pi_1(M_i^j)$,
$i=1,...,n, j=1,...,l_i$.

Note $\pi_1(M)=\pi_1(M_1)*...*\pi_1(M_n)$, each $\pi_1(M_i)$ is an
indecomposable factor of $\pi_1(M)$. Since $g_*$ is an isomorphism
and each punctured 3-sphere has trivial $\pi_1$, from the basic fact
on free product of groups, it follows that there is at least one
punctured prime non-trivial factor in $g^{-1}(M'_i)$. Since this is
true for each $i=1,...n$ and there are at most $n$ punctured prime
non-trivial factors in $U$, it follows that there are $n$ punctured
prime non-trivial factors in $U$. Hence there is exactly one
punctured prime non-trivial factor in $g^{-1}(M'_i)$, denoted as
$M_{\tau(i)}$, moreover $g_*: \pi_1(M_{\tau(i)})\rightarrow
\pi_1(M_i)$ is an isomorphism, where $\tau$ is a permutation on
$\{1,...,n\}$.

Since $\pi_1(M_i)=Z$ if and only if $M_i=S^2\times S^1$, it follows
that if $M_i=S^2\times S^1$, then $M_{\tau(i)}=S^2\times S^1$. Since
$D(S^2\times S^1)=Z$, below we assume that $\widehat M_i'\ne
S^2\times S^1$, and to show that there is a map $g_i: M_{\tau(i)}\to
M_i$ of degree $d$.

Since the map $g: M(g)\to M$ has degree $d$, then $g_i=g|:
(\cup_{j=1}^{l_i} M_i^j)(g)\to M'_i$ is a proper map of degree $d$,
which can extend to a map $\widehat{g_i}: \widehat{(\cup_{j=1}^{l_i}
M_i^j)(g)}\to \widehat{M'_i}=M_i$ of degree $d$ between closed
3-manifolds. The last map is also defined on
$\widehat{(\cup_{j=1}^{l_i} M_i^j)(g)}\setminus \overline {\partial
B_g} =(\cup_{j=1}^{l_i} \widehat{M_i^j)}\setminus B_g\subset
(\cup_{j=1}^{l_i} \widehat{M_i^j)}$, where $\overline {\partial
B_g}\subset M(g)$ is the image of $ {\partial B_g}\subset M$.

Now consider the map $\widehat{g_i}:(\cup_{j=1}^{l_i}
\widehat{M_i^j)}\setminus B_g \to M_i$. Since $\pi_2(M_i)=0$, we can
extend the map $\widehat{g_i}$ from $\cup_{j=1}^{l_i}
\widehat{M_i^j}\setminus B_g$ to $\cup_{j=1}^{l_i} \widehat{M_i^j}$.
More carefully, for each pair $B_k^+, B_k^- \subset \cup_{j=1}^{l_i}
\widehat{M_i^j}$ we can make the extension with the property
$\widehat{g_i}|_{B_i^+}=\widehat{g_i}|_{B_i^-}\circ \widehat{r_i}$,
where $\widehat {r_i}: B_i^+\rightarrow B_i^-$ is an orientation
reversing homeomorphism extending $r_i:
\partial B_i^+\rightarrow
\partial B_i^-$.
Now it is easy to see  the map $\widehat g_i: \cup_{j=1}^{l_i}
\widehat{M_i^j}\to M_i$ is still of degree $d$.

From the map $\widehat g_i: (\cup_{j=1}^{l_i} \widehat{M_i^j)}\to
M_i$ one can obviously define a map $ g_i: \#_{j=1}^{l_i}
\widehat{M_i^j}\to M_i$ of degree $d$ between connected 3-manifolds.
Since all $\widehat{M_i^j}$ are $S^3$ except one is $M_{\tau(i)}$,
we have map $g_i: M_{\tau(i)}\to M_i$.
\end{proof}

\begin{defn} For  closed oriented 3-manifold $M$, $M'$, define
$$D_{iso}(M, M')=\{deg(f)\ |\ f:\ M\rightarrow M',\ f\ {\rm induces\
isomorphism\ on \ fundamental\ group}\},$$
$$D_{iso}(M)=\{deg(f)\ |\ f:\ M\rightarrow M,\ f\ {\rm induces\
isomorphism\ on \ fundamental\ group}\}.$$
\end{defn}

 Under the condition we considered in this
section,  we have $D(M)=D_{iso}(M)$ by Lemma \ref{iso}.

\begin{lem} \label{supset} Suppose $f_i: M_i\to M_i'$ is a map of
degree $d$ between closed $n$-manifolds, $n\ge 3$, $f_{i*}$ is
surjective on $\pi_1$, $i=1,2$. Then there is a map $f: M_1\# M_2\to
M_1'\# M_2'$ of degree $d$ and $f_{*}$ is surjective on $\pi_1$. In
particular,

(1) $D_{iso}(M_1\# M_2, M'_1\# M'_2)\supset D_{iso}(M_1, M'_1)\cap
D_{iso}(M_2, M'_2),$

(2) $D_{iso}(M_1\# M_2)\supset D_{iso}(M_1)\cap D_{iso}(M_2).$
\end{lem}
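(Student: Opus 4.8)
The plan is to construct the map $f$ directly by a standard connected-sum-of-maps argument. Recall that $M_1 \# M_2$ is obtained by removing an open ball from each $M_i$ and gluing along the resulting boundary spheres, and similarly for $M_1' \# M_2'$. First I would arrange, after a homotopy, that each $f_i$ behaves nicely near a chosen ball $B_i \subset M_i$: pick a point $x_i \in M_i$ that is a regular value of $f_i$ (possible since $\deg(f_i) = d \neq 0$, or by a small perturbation) and whose preimage contains a point $y_i$ at which $f_i$ is a local orientation-preserving homeomorphism. Take a small ball $B_i'$ around $x_i$ and a small ball $B_i$ around $y_i$ inside $f_i^{-1}(B_i')$ so that $f_i$ maps $B_i$ homeomorphically onto $B_i'$, preserving orientation. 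After a homotopy supported away from $\partial B_i$ I can further assume $f_i$ maps $M_i \setminus \operatorname{int} B_i$ into $M_i' \setminus \operatorname{int} B_i'$ — this uses that $M_i \setminus f_i^{-1}(\operatorname{int} B_i')$ deformation retracts appropriately, or more simply that one can push the finitely many other sheets of $f_i^{-1}(B_i')$ off $B_i'$ since $n \geq 3$ gives enough room (the relevant spheres bound balls and can be swept across).

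Next, having normalized $f_1$ and $f_2$ near their respective balls, I would glue: identify $\partial B_1$ with $\partial B_2$ (orientation-reversingly) to form $M_1 \# M_2$, and $\partial B_1'$ with $\partial B_2'$ to form $M_1' \# M_2'$, choosing the identifications compatibly so that the homeomorphisms $f_i|_{\partial B_i} \colon \partial B_i \to \partial B_i'$ match up on the overlap. The restrictions $f_i|_{M_i \setminus \operatorname{int} B_i}$ then agree on the glued sphere and assemble into a continuous map $f \colon M_1 \# M_2 \to M_1' \# M_2'$. To compute $\deg(f)$, take a regular value in the interior of, say, the copy of $M_1' \setminus \operatorname{int} B_1'$ inside $M_1' \# M_2'$ that avoids the gluing sphere; its preimage lies entirely in the $M_1$-summand, and counting with signs gives exactly $\deg(f_1) = d$. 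For the $\pi_1$ statement, use van Kampen: $\pi_1(M_1 \# M_2) = \pi_1(M_1) * \pi_1(M_2)$ (here $n \geq 3$ ensures the gluing sphere is simply connected), and $f_*$ is the free product $f_{1*} * f_{2*}$, which is surjective onto $\pi_1(M_1') * \pi_1(M_2') = \pi_1(M_1' \# M_2')$ since each $f_{i*}$ is.

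For the two itemized consequences: if $d \in D_{iso}(M_1, M_1') \cap D_{iso}(M_2, M_2')$, then each $f_i$ can be taken to induce an \emph{isomorphism} on $\pi_1$; then $f_* = f_{1*} * f_{2*}$ is an isomorphism of free products, so $d \in D_{iso}(M_1 \# M_2, M_1' \# M_2')$, giving (1). Statement (2) is the special case $M_i' = M_i$.

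The main obstacle is the normalization step — making $f_i$ map the complement of a ball to the complement of a ball. Surjectivity of $f_{i*}$ on $\pi_1$ is exactly what is needed here, but one must be slightly careful: $f_i^{-1}(B_i')$ generally has several components besides the chosen $B_i$, and one needs to homotope $f_i$ so these extra components get pushed out of $B_i'$. In dimension $n \geq 3$ each such extra preimage-sphere is null-homotopic in $M_i'$ (no constraint beyond $\pi_2$, which we do not need to vanish — we only move the \emph{domain} spheres), and a finger move / ball-swallowing argument, done one component at a time, accomplishes this without changing the degree or the behavior near $B_i$. Making this bookkeeping precise while keeping the orientation-reversing matching homeomorphisms $r_i$ consistent across the gluing is the one place that requires care; everything else is routine.
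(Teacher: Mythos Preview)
Your overall strategy---normalize each $f_i$ so that it restricts to a proper map on the complement of a ball, then glue---is exactly the paper's approach. But the normalization you describe is impossible when $d \neq \pm 1$, and this breaks the argument.

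Concretely: you arrange that $f_i|_{\partial B_i} \colon \partial B_i \to \partial B_i'$ is a homeomorphism, and then ask that $f_i(M_i \setminus \operatorname{int} B_i) \subset M_i' \setminus \operatorname{int} B_i'$. But if $f_i$ restricts to a proper map $(M_i \setminus \operatorname{int} B_i,\, \partial B_i) \to (M_i' \setminus \operatorname{int} B_i',\, \partial B_i')$, its degree can be read off on the boundary spheres, and that degree is the same as $\deg(f_i)=d$ (compute $\deg f_i$ at a regular value outside $B_i'$). Since your boundary map is a homeomorphism, this forces $d=\pm 1$. Equivalently: once you have ``pushed the other sheets of $f_i^{-1}(B_i')$ off $B_i'$'' while keeping $f_i|_{B_i}$ a homeomorphism, a generic point of $B_i'$ has exactly one preimage, so the degree has become $1$---which is impossible if the homotopy was honest.

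The paper handles the normalization differently: using $\pi_1$-surjectivity (citing [RW]), it homotopes $f_i$ so that $f_i^{-1}(D_i')$ is a \emph{single} $n$-ball $D_i$, without insisting that $f_i|_{D_i}$ be a homeomorphism. The restriction $\partial D_i \to \partial D_i'$ is then a degree-$d$ map of $(n-1)$-spheres. Since any two degree-$d$ maps $S^{n-1}\to S^{n-1}$ are homotopic, the two boundary maps can be matched by a further proper homotopy, and the gluing goes through. Your degree computation and van Kampen argument at the end are fine once the normalization is done this way.
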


\begin{proof}
Since $f_{*}$ is surjective on $\pi_1$, it is known (see [RW] for
example), we can homotopy $f_i$ such that for some $n$-ball $D'_i
\subset M'_i$, $f_i^{-1}(D_i)$ is an $n$-ball ${D}_i \subset M_i$.
Thus we get a proper map $\bar{f_i}:\ M_i\setminus D_i \rightarrow
M'_i\setminus D'_i$ of degree $d$, which also induces a degree $d$
map from $\partial D_i$ to $\partial D'_i$. Since maps of the same
degree between $(n-1)$-spheres are homotopic, so after proper
homotopy, we can paste $\bar{f}_1$ and $\bar{f}_2$ along the
boundary to get map $f: M_1\# M_2\to M_1'\# M_2'$ of degree $d$ and
$f_{*}$ is surjective on $\pi_1$.
\end{proof}

\subsection{$D(M)$ for connected sums}

Suppose
$$M=(mS^2\times
S^1)\#(m_1P_1\#n_1\bar{P_1})\#\cdots\#(m_sP_s\#n_s\bar{P_s})$$
$$\#(L(p_1,q_{1,1})\#\cdots\#L(p_1,q_{1,r_1}))\#\cdots\#(L(p_t,q_{t,1})\#\cdots\#L(p_t,q_{t,r_t})),$$
where all the $P_i$ are $3$-manifolds with finite fundamental group
different from lens spaces, all the $P_i$ are different with each
other, and all the positive integer $p_i$ are different from each
other.

To prove Theorem \ref{2}, we need only to prove the three
propositions below.

\begin{prop} \label{=}$$D(M)=D_{iso}(m_1P_1\#n_1\bar{P_1})\bigcap \cdots \bigcap
D_{iso}(m_sP_s\#n_s\bar{P_s})\bigcap
D_{iso}(L(p_1,q_{1,1})\#\cdots\#L(p_1,q_{1,r_1}))$$ $$\bigcap \cdots
\bigcap D_{iso}(L(p_t,q_{t,1})\#\cdots\#L(p_t,q_{t,r_t})).  \qquad
(\ast)$$
\end{prop}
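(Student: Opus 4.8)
The plan is to prove the two inclusions $\subseteq$ and $\supseteq$ in $(\ast)$ separately, using Theorem \ref{perm} for the hard direction and Lemma \ref{supset} for the easy one.

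\textbf{The inclusion $\supseteq$.} Suppose $d$ lies in every $D_{iso}$-factor on the right-hand side of $(\ast)$. Then for each block $m_iP_i\#n_i\bar P_i$ there is a self-map of degree $d$ inducing a $\pi_1$-isomorphism, and likewise for each lens-space block $L(p_i,q_{i,1})\#\cdots\#L(p_i,q_{i,r_i})$; moreover $d\in\mathbb{Z}=D(S^2\times S^1)$, so the $S^2\times S^1$ block (which is $(mS^2\times S^1)=\#_m(S^2\times S^1)$, realized by Lemma \ref{supset} applied repeatedly) also admits such a map. Applying Lemma \ref{supset}(2) inductively over the connected-sum decomposition of $M$ into these blocks, I get a self-map of $M$ of degree $d$ that is surjective on $\pi_1$; since $M$ is in Class 2, Lemma \ref{iso} upgrades this to an isomorphism, so $d\in D(M)=D_{iso}(M)$. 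This direction is routine bookkeeping.

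\textbf{The inclusion $\subseteq$.} Let $f:M\to M$ have degree $d\neq 0$ (the cases $d=0$ are trivial since $0$ lies in every factor). By Theorem \ref{perm}, $f$ determines a permutation $\tau$ of the prime summands and, for each summand $M_i$, a map $g_i:M_{\tau(i)}\to M_i$ of degree $d$ inducing a $\pi_1$-isomorphism. The key point is that $\tau$ must respect the groupings in the decomposition: two prime summands $M_i, M_j$ can be matched by $\tau$ (i.e. $\tau$ sends one to a position of the other) only if $M_i\cong M_j$ or $M_i\cong\overline{M_j}$, because a degree-$d$ $\pi_1$-isomorphism between closed oriented $3$-manifolds with finite or cyclic $\pi_1$ forces a homeomorphism up to orientation (for $S^3$-manifolds $\pi_1$ determines the manifold by the classification recalled in Class 1; for lens spaces $L(p,q)\cong L(p,q')$ or $L(p,p-q')$ — same $p$). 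Hence $\tau$ permutes each block $\{P_i,\bar P_i\}$-type set and each fixed-$p$ lens block among themselves. Restricting $f$ (via the $g_i$ assembled back into a self-map of each block) then shows $d\in D_{iso}$ of every block, giving $d\in$ RHS. I would phrase the block-restriction as: the collection $\{g_i\mid M_i \text{ in a fixed block}\}$ reassembles, by the connected-sum construction of Lemma \ref{supset} reversed (or directly, since the $g_i$ are $\pi_1$-isomorphisms onto indecomposable factors), into a self-map of that block of degree $d$ inducing a $\pi_1$-isomorphism.

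\textbf{Main obstacle.} The subtle step is justifying that $\tau$ preserves the blocks, and more precisely that having degree-$d$ $\pi_1$-isomorphisms $g_i:M_{\tau(i)}\to M_i$ for all $i$ in a block is equivalent to a single degree-$d$ $\pi_1$-isomorphic self-map of that block. For a block of the form $m_iP_i\#n_i\bar P_i$ this is where the dichotomy in Theorem \ref{2}(2) ($m=n$ versus $m\neq n$) secretly enters: if the multiplicities differ, $\tau$ cannot swap $P_i$ with $\bar P_i$, so orientations are preserved and we land in $D_{iso}(P_i)$; if they are equal, $\tau$ may perform such a swap, forcing us into $D_{iso}(P_i)\cup(-D_{iso}(P_i))$. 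I expect the cleanest route is to isolate a lemma: if $N=N_1\#\cdots\#N_k$ with all $N_j$ having finite or cyclic $\pi_1$, and there is a permutation $\rho$ of $\{1,\dots,k\}$ together with degree-$d$ $\pi_1$-isomorphisms $N_{\rho(j)}\to N_j$, then $d\in D_{iso}(N)$ — proved by composing the $g_i$ with the canonical homeomorphisms realizing $N_{\rho(j)}\cong N_j$ (or $\overline{N_j}$) and pasting via Lemma \ref{supset}; this handles both block types uniformly and is what the later propositions (the ones computing $D_{iso}(mP\#n\bar P)$ and $D_{iso}$ of lens blocks) will feed into.
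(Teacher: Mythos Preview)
Your proposal is correct and follows essentially the same approach as the paper: Theorem \ref{perm} plus the classification of $S^3$-manifolds to see that $\tau$ preserves blocks, then Lemma \ref{supset}(1) to reassemble the $g_i$ into a block self-map for the inclusion $\subseteq$, and Lemma \ref{supset}(2) for the inclusion $\supseteq$. Two small remarks: your ``Main obstacle'' paragraph (the $m=n$ versus $m\neq n$ dichotomy) is really the content of Proposition \ref{m=n} and is not needed for Proposition \ref{=} itself --- the paper simply invokes Lemma \ref{supset}(1) once the block-preservation of $\tau$ is established; and your parenthetical that ``$0$ lies in every factor'' is not actually true (e.g.\ $0\notin D_{iso}(P)$ when $|\pi_1(P)|>1$, by Proposition \ref{6}), though the paper glosses over the degree-zero case in the same way.
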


\begin{proof} For every self-mapping degree $d$ of $M$, in Theorem \ref{perm}
we have proved that for every oriented connected summand $P$ of $M$,
it corresponds to an oriented connected summand $P'$, such that
there is a degree $d$ mapping $f:\ P\rightarrow P'$, and $f$ induces
isomorphism on fundamental group. By the classification of
$3$-manifolds with finite fundamental group (see \cite{Or}, 6.2),
$P$ and $P'$ are homeomorphism (not considering the orientation)
unless they are lens spaces with same fundamental group. Now by
Lemma \ref{supset} (1), we have $d\in D_{iso}(m_iP_i\#n_i\bar{P_i})$
and $d\in D_{iso}(L(p_j,q_{j,1})\#\cdots\#L(p_j,q_{j,r_j}))$, for
$i=1,...,s$ and $j=1,...,t$. Hence we have proved
$$D(M)\subset
D_{iso}(m_1P_1\#n_1\bar{P_1})\bigcap \cdots \bigcap
D_{iso}(m_sP_s\#n_s\bar{P_s})\bigcap
D_{iso}(L(p_1,q_{1,1})\#\cdots\#L(p_1,q_{1,r_1}))$$ $$\bigcap \cdots
\bigcap D_{iso}(L(p_t,q_{t,1})\#\cdots\#L(p_t,q_{t,r_t})).$$ (Since
$D(mS^2\times S^1)=\mathbb{Z}$, we can just forget it in the
discussion.)

Apply Lemma \ref{supset} once more, we finish the proof.
\end{proof}

\begin{prop}\label{m=n} If $P$ is a $3$-manifold with finite fundamental group different from lens space,
$D_{iso}(mP\#n\bar{P})=\left\{ \begin{array}{cc}
         D_{iso}(P) & if \ m\ne n,\\
         D_{iso}(P)\bigcup(-D_{iso}(P))& if \ m=n.
         \end{array} \right.$
\end{prop}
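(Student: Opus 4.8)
The plan is to prove the two inclusions of the asserted equality separately. Write $M=mP\#n\bar P$. Since $P$ is an elliptic manifold that is not a lens space, $\pi_1(P)$ is a nontrivial finite group, $M$ has exactly $m+n$ prime summands, $m$ of them homeomorphic to $P$ and $n$ to $\bar P$, and (by the classification of $3$-manifolds with finite fundamental group, as used already in the proof of Proposition \ref{=}) any prime summand of $M$ with fundamental group $\pi_1(P)$ is homeomorphic, as an \emph{oriented} manifold, to $P$ or to $\bar P$. I will use freely the orientation bookkeeping identities $D_{iso}(\bar P)=D_{iso}(P)$ and $D_{iso}(\bar P,P)=D_{iso}(P,\bar P)=-D_{iso}(P)$, which hold because reversing the orientation of a closed $3$-manifold negates its fundamental class, hence negates the degree of every map into, out of, or between such manifolds, while leaving the $\pi_1$-isomorphism condition untouched. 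The cases $m+n\le 1$ are trivial (then $M$ is $S^3$, $P$, or $\bar P$), so I assume $m+n\ge 2$; note $M\not\cong RP^3\#RP^3$ since $P$ is not a lens space, so Theorem \ref{perm} applies to $M$. Finally $0\notin D_{iso}(P)$ by Proposition \ref{6} (no $k^2$ with $\gcd(k,|\pi_1(P)|)=1$ is $\equiv 0$ mod $|\pi_1(P)|>1$), so $0$ lies on neither side and I may restrict to degrees $d\ne 0$.

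For the inclusion $\supseteq$: iterating Lemma \ref{supset}(2) over the $m+n$ prime summands and using $D_{iso}(\bar P)=D_{iso}(P)$ gives $D_{iso}(M)\supseteq D_{iso}(P)$ unconditionally, the pasted self-map being $\pi_1$-surjective and hence a $\pi_1$-isomorphism because $\pi_1(M)$, a free product of finitely many finite groups, is residually finite and therefore Hopfian. When $m=n$ one has moreover that $M$ is amphichiral: reversing orientation interchanges the $P$-summands with the $\bar P$-summands, so $\bar M\cong m\bar P\#nP=M$ as oriented manifolds, providing an orientation-reversing self-homeomorphism $r$ of $M$; composing $r$ with any degree-$d$ $\pi_1$-isomorphism yields a degree-$(-d)$ one, so $D_{iso}(M)=-D_{iso}(M)$ and hence $D_{iso}(M)\supseteq D_{iso}(P)\cup(-D_{iso}(P))$.

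For the inclusion $\subseteq$: let $d\ne 0$ lie in $D_{iso}(M)$, realized by $f\colon M\to M$. Theorem \ref{perm} gives a permutation $\tau$ of the prime summands $M_1,\dots,M_{m+n}$ and, for each $i$, a degree-$d$ map $g_i\colon M_{\tau(i)}\to M_i$ inducing a $\pi_1$-isomorphism. Each $g_i$ is a map between two members of $\{P,\bar P\}$, so by the bookkeeping identities $\deg g_i=d$ forces $d\in D_{iso}(P)$ when $M_i$ and $M_{\tau(i)}$ have the same orientation type and $d\in -D_{iso}(P)$ otherwise; in every case $d\in D_{iso}(P)\cup(-D_{iso}(P))$, which together with the previous paragraph settles the case $m=n$. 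When $m\ne n$ there are only $\min(m,n)$ summands of the minority orientation type, so by pigeonhole at least $|m-n|>0$ of the maps $g_i$ run between summands of the same orientation type; any one of these forces $d\in D_{iso}(P)$, and with $D_{iso}(M)\supseteq D_{iso}(P)$ we conclude $D_{iso}(M)=D_{iso}(P)$.

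The proof contains no single deep step, but the point demanding the most care is the orientation bookkeeping: one must track, through both Theorem \ref{perm} and the pasting construction of Lemma \ref{supset}, how the chirality of each prime summand couples to the sign of the degree, and recognize that the extra summand $-D_{iso}(P)$ shows up \emph{exactly} when $m=n$ — which is precisely when $M$ is amphichiral and, equivalently on the map side, when the permutation $\tau$ is free to interchange the two orientation types. A minor point is upgrading the $\pi_1$-surjective maps delivered by Lemma \ref{supset} to $\pi_1$-isomorphisms, handled by Hopficity of the free-product group $\pi_1(M)$, together with the bookkeeping check that $0$ belongs to no $D_{iso}$ here.
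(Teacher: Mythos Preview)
Your proof is correct and close in spirit to the paper's, but it takes a genuinely shorter route for the inclusion $\subseteq$. The paper first establishes the number-theoretic fact that $D_{iso}(P)\cap(-D_{iso}(P))=\emptyset$ (using that $4\mid|\pi_1(P)|$ for every non-lens spherical $P$, together with Proposition~\ref{6}), and from this disjointness deduces that the permutation $\tau$ of Theorem~\ref{perm} must either preserve orientation types entirely (always, when $m\ne n$) or swap them entirely (possible only when $m=n$). You bypass this: a single pigeonhole step shows that when $m\ne n$ at least one $g_i$ runs $P\to P$, forcing $d\in D_{iso}(P)$ directly, and when $m=n$ each $g_i$ already lands $d$ in $D_{iso}(P)\cup(-D_{iso}(P))$. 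Your argument is more elementary and does not need the arithmetic input $4\mid|\pi_1(P)|$. What the paper's approach buys in exchange is the side information $D_{iso}(P)\cap(-D_{iso}(P))=\emptyset$, in particular $-1\notin D(P)$, which the authors invoke later (see Example~2.6); your proof does not yield this, but it is not needed for the proposition itself.

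Two minor remarks. First, your Hopficity step is fine but not really needed: Lemma~\ref{supset}(2) is already stated for $D_{iso}$, so iterating it gives $D_{iso}(M)\supseteq D_{iso}(P)$ without any upgrade. Second, your amphichirality argument for $-D_{iso}(P)\subseteq D_{iso}(M)$ when $m=n$ is exactly what the paper does implicitly (pasting maps $P\to\bar P$ and $\bar P\to P$ of degree $d\in -D_{iso}(P)$ via Lemma~\ref{supset}(1) and identifying $m\bar P\#mP\cong M$).
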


\begin{proof}
If $P$ is not a lens space, from the list in \cite{Or}, we can check
that $4| |\pi_1(P)|$. By Proposition \ref{6},
$D_{iso}(Q)=\{k^2+l|\pi_1(Q)|\ |\ gcd(k,|\pi_1(Q)|)=1\}$, where $Q$
is any $3$-manifolds with $S^3$ geometry. If
$k^2+l|\pi_1(P)|=-k'^2-l'|\pi_1(P)|$, then
$k^2+k'^2=-(l+l')|\pi_1(P)|$. Since $4| |\pi_1(P)|$ and
$gcd(k,|\pi_1(P)|)=gcd(k',|\pi_1(P)|)=1$, $k,k'$ are both odd, thus
$-(l+l')|\pi_1(P)|=k^2+k'^2=4s+2$, contradicts with $4| |\pi_1(P)|$.
So $D_{iso}(P)\bigcap (-D_{iso}(P))=\emptyset$. (In particular
$-1\ne D(P)$.)

From the definition we have $D_{iso}(P)=D_{iso}(\bar P)$ and
$D_{iso}(P,\bar P)=D_{iso}(\bar P, P)=-D_{iso}(\bar P)$.

 If
$m\ne n$, we may assume that $m>n$. For the self-map $f$, if some
$P$ corresponds to $\bar{P}$, there must also be some $P$
corresponds to $P$, so $deg(f)\in D_{iso}(P)\bigcap (-D_{iso}(P))$,
it is impossible by the argument in first papragraph. So all the $P$
correspond to $P$, and all the $\bar{P}$ correspond to $\bar{P}$.
Since $D_{iso}(P)=D_{iso}(\bar P)$, we have  $D_{iso}(mP\#
n\bar{P})\subset D_{iso}(P)$. By Lemma \ref{supset} and the fact
$D_{iso}(P)=D_{iso}(\bar P)$, we have $D_{iso}(mP\# n\bar{P})=
D_{iso}(P)$.

If $m=n$, similarly we have either all the $P$ correspond to $P$ and
all the $\bar{P}$ correspond to $\bar{P}$; or all the $P$ correspond
to $\bar{P}$ and all the $\bar{P}$ correspond to $P$. Since
$D_{iso}(P)=D_{iso}(\bar P)$ and $D_{iso}(P,\bar P)=D_{iso}(\bar P,
P)=-D_{iso}(\bar P)$, we have  $D_{iso}(mP\# m\bar{P})\subset
D_{iso}(P)\bigcup (-D_{iso}(P))$. On the other hand from the
argument above, we have  $D_{iso}(P), -D_{iso}(P)\subset
D_{iso}(mP\# m\bar{P})$,  hence $D_{iso}(mP\# m\bar{P})=
D_{iso}(P)\bigcup (-D_{iso}(P))$.
\end{proof}

\begin{lem}\label{lens}
$D_{iso}(L(p,q),L(p,q'))=\{k^2q^{-1}q'+lp\ |\ gcd(k,p)=1\}$, here
$q^{-1}$ is seen as in group $U_p=\{{\rm all\ the\ units\ in\ the \
ring}\ \mathbb{Z}_p\}$.
\end{lem}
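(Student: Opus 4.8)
The plan is to compute $D_{iso}(L(p,q),L(p,q'))$ by reducing degree-one-on-$\pi_1$ maps between lens spaces to a purely homotopy-theoretic/cohomological calculation. First I would recall that every map $f\colon L(p,q)\to L(p,q')$ inducing an isomorphism on $\pi_1\cong\mathbb Z_p$ is, up to homotopy, determined by its effect on $\pi_1$ together with the degree; more precisely, $L(p,q)$ is a $K(\mathbb Z_p,1)$ through dimension $2$ and its homotopy type in dimension $3$ is pinned down by the linking form on $H_1$. So I would set $f_*\colon\pi_1(L(p,q))\to\pi_1(L(p,q'))$ to be multiplication by a unit $a\in U_p$ and ask which degrees are realizable. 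The classical fact (Olum, de Rham) is that a map $L(p,q)\to L(p,q')$ with $f_*=a$ on $\pi_1$ has degree $d$ satisfying the congruence coming from compatibility with the linking forms: the linking form of $L(p,q)$ is $\lambda_q(x,x)=q^{-1}/p \bmod \mathbb Z$ (or $q/p$, depending on convention), and naturality of the linking form under a degree-$d$ map forces $d\cdot\lambda_{q'}(f_*x,f_*x)=\lambda_q(x,x)$, i.e. $d\,a^2\,(q')^{-1}\equiv q^{-1}\pmod p$, equivalently $d\equiv a^2\, q^{-1}q'\pmod p$.

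Next I would handle realizability: given any unit $a\in U_p$ and any integer $d$ with $d\equiv a^2 q^{-1}q'\pmod p$, I need to construct an actual map of degree $d$ inducing $a$ on $\pi_1$. The standard construction is to start from an equivariant map on the universal covers. Write $L(p,q)=S^3/\mathbb Z_p$ with the standard action; a linear (or suitably chosen) equivariant self-map of $S^3$ covering the $\mathbb Z_p$-action twisted by $a$ realizes the ``diagonal'' degrees $k^2$ (the square arising because $S^3\subset\mathbb C^2$ and a degree-$k$ map on each $\mathbb C$-factor gives degree $k^2$), and composing with the quotient gives a map $L(p,q)\to L(p,q')$; then to hit every residue in the prescribed class modulo $p$ one pre-composes with a degree-$1$ pinch-and-wrap map $L(p,q')\to L(p,q')$ realizing any multiple of $p$ extra degree (the ``$+lp$'' freedom), exactly as in the construction behind Proposition \ref{6}. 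This shows $D_{iso}(L(p,q),L(p,q'))\supseteq\{k^2 q^{-1}q'+lp\mid\gcd(k,p)=1\}$, and combined with the linking-form obstruction $d\equiv a^2 q^{-1}q'$ (where $a^2$ ranges exactly over $U_p^2=\{k^2\bmod p:\gcd(k,p)=1\}$ as $a$ ranges over $U_p$), we get equality.

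The main obstacle I expect is bookkeeping of conventions: getting the linking form of $L(p,q)$ right (is it $q/p$ or $q^{-1}/p$, and with which orientation), and making sure the ``$q^{-1}$'' in the statement matches the one forced by naturality — a sign or inversion error here would flip $q^{-1}q'$ to $qq'^{-1}$ or $q(q')^{-1}$. I would pin this down by checking the identity map ($q=q'$, $a=1$, $d=1$) and a known reflection ($L(p,q)\cong L(p,-q)$ orientation-reversingly, giving degree $-1$), which together fix all conventions. The realizability step is essentially standard obstruction theory over the $2$-skeleton plus the fact that $\pi_3(S^3)=\mathbb Z$ controls the $3$-cell, so the only real content is the $\pi_1$-equivariance of the extension, which the $S^3$-covering construction supplies directly. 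Everything else — that $f_*$ iso on $\pi_1$ is automatic here because any degree-$d$ self-map with $d\ne 0$ of a space with $\pi_1=\mathbb Z_p$ and the target's $\pi_1$ finite need not be iso in general, but we are restricting to $D_{iso}$ so we simply impose it — is routine.
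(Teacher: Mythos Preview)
Your approach is correct but genuinely different from the paper's. You argue directly via the linking form (the Olum/de Rham congruence) for the obstruction and then build maps by hand from equivariant maps on $S^3$ together with a pinch construction. The paper instead writes down a single explicit map $f_{q,q'}\colon L(p,q)\to L(p,q')$ of degree $qq'$, namely the map induced by $(z_1,z_2)\mapsto \bigl(z_1^{\,q},\,z_2^{\,q'}\bigr)$ (normalized to land in $S^3$), observes it is a $\pi_1$-isomorphism, and then bootstraps both containments from the already-established Proposition~\ref{6}: pre-composing self-maps of $L(p,q)$ with $f_{q,q'}$ gives the inclusion $\supset$, and post-composing an arbitrary $g\in D_{iso}(L(p,q),L(p,q'))$ with $f_{q',q}$ forces $qq'\,\deg(g)\in D_{iso}(L(p,q))$, giving $\subset$.

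What each buys: the paper's route is shorter and avoids any linking-form bookkeeping, since all the cohomological content is hidden inside Proposition~\ref{6}; once that proposition is in hand, the lemma follows from one explicit formula and two compositions. Your route is more self-contained (it does not invoke Proposition~\ref{6} as a black box) and makes the role of $U_p/U_p^2$ transparent from the start, which is conceptually helpful for the subsequent Proposition on $D_{iso}(L(p,q_1)\#\cdots\#L(p,q_n))$. The convention hazard you flag (whether the answer comes out as $q^{-1}q'$ or $q\,q'^{-1}$) is real, and the paper sidesteps it entirely by never writing down the linking form.
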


\begin{proof}
$L(p,q)$ is the quotient of $S^3$ by the action of $\mathbb{Z}_p$,
$(z_1,z_2)\rightarrow
(e^{i\frac{2\pi}{p}}z_1,e^{i\frac{2q\pi}{p}}z_2)$. Let $\tilde
f_{q,q'}:\ S^3\rightarrow S^3$, $\tilde
f_{q,q'}(z_1,z_2)=(\frac{z_1^q}{\sqrt{|z_1|^{2q}+|z_2|^{2q'}}},\frac{z_2^{q'}}{\sqrt{|z_1|^{2q}+|z_2|^{2q'}}})$.
We can check that this map induces a map ${f}_{q,q'}:\
L(p,q)\rightarrow L(p,q')$ with degree $qq'$, moreover since $q, q'$
are coprime with $p$, $f_{q,q'*}$ is an isomorphism $\pi_1$. By
Proposition \ref{6} $D_{iso}(L(p,q))=\{k^2+lp\ |\ gcd(k,p)=1\}$.
Compose each self-map on $L(p,q)$ which induces an isomorphism on
$\pi_1$ with $f_{q,q'}$, we have $\{k^2q^{-1}q'+lp\ |\ gcd(k,p)=1\}
\subset D_{iso}(L(p,q),L(p,q'))$. On the other hand, for each map
$g:\ L(p,q)\rightarrow L(p,q')$ of degree $d$ which induces an
isomorphism on $\pi_1$, then $f_{q',q }\circ g$ is a self-map on
$L(p,q)$ which induces an isomorphism on $\pi_1$, where  $f_{q',q}:\
L(p,q')\rightarrow L(p,q)$ is a degree $qq'$ map. Hence the degree
of $f_{q',q }\circ g$ is $qq'd$ which must be in $\{k^2+lp\ |\
gcd(k,p)=1\}$, that is $qq'd=k^2+lp$, $gcd(k,p)=1$, then
$d=k^2q^{-1}q'^{-1}+pl=(k/q^{-1})^2q^{-1}q'+pl\in \{k^2q^{-1}q'+lp\
|\ gcd(k,p)=1\}$. Hence $D_{iso}(L(p,q),L(p,q'))=\{k^2q^{-1}q'+lp\
|\ gcd(k,p)=1\}$.
\end{proof}

Let $U_p=\{{\rm all\ units\ in\ ring}\ \mathbb{Z}_p\}$,
$U^2_p=\{a^2\ |\ a\in U_p\}$, which is a subgroup of $U_p$. Let $H$
denote the natural projection from $\{n\in \mathbb{Z}\ |\
gcd(n,p)=1\}$ to $U_p/U_p^2$.

Later, we will omit the $p$, denote them by $U$ and $U^2$. We
consider the quotient $U/U^2=\{a_1,\cdots,a_m\}$, every $a_i$
corresponds with a coset $A_i$ of $U^2$. For the structure of $U$,
see \cite{IR} page 44, then we can get the structure of $U^2$ and
$U/U^2$ easily.

Define $\bar{A}_s=\{L(p,q_i)\ |\ q_i\in A_s\}$ (with repetition
allowed). In $U/U^2$, define $B_l=\{a_s\ |\ \# \bar{A}_s=l\}$ for
$l=1,2,\cdots$, there are only finitely many $B_l$'s are nonempty.
Let $C_l=\{a\in U/U^2\ |\ a_ia\in B_l,\ \forall a_i \in B_l\}$ if
$B_l\ne \emptyset$ and $C_l=U/U^2$ otherwise,
$C=\bigcap_{l=1}^{\infty} C_l$.

\begin{prop}
$D_{iso}(L(p,q_1)\#\cdots\#L(p,q_n))= H^{-1}(C)$.
\end{prop}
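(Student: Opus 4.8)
The plan is to prove the two inclusions $D_{iso}(L(p,q_1)\#\cdots\#L(p,q_n)) \subset H^{-1}(C)$ and $D_{iso}(L(p,q_1)\#\cdots\#L(p,q_n)) \supset H^{-1}(C)$ separately, using Theorem \ref{perm} and Lemma \ref{lens} as the main tools. First I would set up notation: write $L = L(p,q_1)\#\cdots\#L(p,q_n)$, and recall from Theorem \ref{perm} that any degree $d$ self-map $f$ of $L$ induces a permutation $\tau$ of $\{1,\dots,n\}$ together with degree $d$ maps $g_i : L(p,q_{\tau(i)}) \to L(p,q_i)$, each inducing an isomorphism on $\pi_1$. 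Since all summands share the same fundamental group $\mathbb{Z}_p$, the permutation $\tau$ is unconstrained by the homeomorphism type at this stage; the constraint comes from Lemma \ref{lens}, which says $d \in D_{iso}(L(p,q_{\tau(i)}), L(p,q_i))$ forces $d \equiv k^2 q_{\tau(i)}^{-1} q_i \pmod p$ for some unit $k$, i.e. $H(d) = H(q_{\tau(i)}^{-1}q_i) = H(q_i)H(q_{\tau(i)})^{-1}$ in $U/U^2$ (using that $H(k^2)$ is trivial and $H$ is multiplicative on units).

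For the $\subset$ direction, the key observation is that $\tau$ must permute the cosets: if $q_{\tau(i)} \in A_s$ and $q_i \in A_{s'}$, then $H(d) = a_{s'} a_s^{-1}$ is \emph{independent of $i$}, so $\tau$ carries the multiset $\bar A_s$ into $\bar A_{s'}$ where $a_{s'} = H(d)\cdot a_s$. Since $\tau$ is a bijection, it must map each $\bar A_s$ bijectively onto $\bar A_{H(d)\cdot a_s}$, hence $\#\bar A_s = \#\bar A_{H(d)\cdot a_s}$ for every $s$. Translating into the $B_l$ language: for every nonempty $B_l$ and every $a_s \in B_l$, we get $H(d)\cdot a_s \in B_l$ as well, which is exactly the statement $H(d) \in C_l$; taking the intersection over all $l$ gives $H(d) \in C$, i.e. $d \in H^{-1}(C)$. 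I would also need to double-check the degenerate case where $f_* $ may not literally be handled by Theorem \ref{perm} if some $p_i=1$, but since each $L(p,q_i)$ has $\pi_1 = \mathbb{Z}_p$ nontrivial (we may assume $p>1$, else the statement is vacuous as $L(1,q)=S^3$), Theorem \ref{perm} applies directly.

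For the $\supset$ direction, given $d$ with $H(d) \in C$, I need to \emph{construct} a self-map of $L$ of degree $d$ inducing an isomorphism on $\pi_1$. The idea: choose a permutation $\tau$ realizing the coset-shift by $H(d)$. Concretely, since $H(d) \in C_l$ for each $l$, multiplication by $H(d)$ permutes each $B_l$; lift this to a bijection of the multiset of summands $\{L(p,q_i)\}$ that sends each $L(p,q_i)$ to some $L(p,q_j)$ with $q_j \in H(d)\cdot(\text{coset of }q_i)$ — this is possible precisely because $\#\bar A_s = \#\bar A_{H(d)\cdot a_s}$ when $a_s$ lies in some nonempty $B_l$ (both have the same cardinality $l$). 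For each such pair, $d \equiv k^2 q_i^{-1} q_{\tau^{-1}(i)} \pmod p$ for a suitable unit $k$ (because $H(d) = H(q_{\tau^{-1}(i)})H(q_i)^{-1}$ means $d q_i q_{\tau^{-1}(i)}^{-1}$ is a square unit mod $p$), so by Lemma \ref{lens} there is a degree $d$ map $g_i : L(p,q_{\tau^{-1}(i)}) \to L(p,q_i)$ inducing a $\pi_1$-isomorphism. Then Lemma \ref{supset}(1), applied iteratively across the connected sum, assembles the $g_i$ into a degree $d$ self-map of $L$ surjective (hence, being between manifolds with the same finite-by-free $\pi_1$ via the componentwise isomorphisms, an isomorphism) on $\pi_1$.

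The main obstacle I anticipate is the combinatorial bookkeeping in the $\supset$ direction: verifying that "$H(d) \in C$" genuinely suffices to build a valid permutation $\tau$ of the \emph{multiset} of lens-space summands, not merely of the set of cosets. One must be careful that for cosets $a_s$ with $\bar A_s = \emptyset$ the condition is vacuous, for cosets appearing with multiplicity the cardinalities match by the definition of $C_l$, and that matching cardinalities of two finite multisets is exactly what is needed to define a bijection between them — there is no further obstruction since within a fixed coset all the lens spaces $L(p,q_i)$, $L(p,q_j)$ admit $\pi_1$-isomorphism degree-$qq'$ maps in both directions by Lemma \ref{lens}. A secondary subtlety is confirming that the degree of the assembled connected-sum map is exactly $d$ and not some multiple; this is guaranteed by Lemma \ref{supset}, which preserves degree, so I would simply cite it rather than recompute.
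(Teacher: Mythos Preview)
Your proposal is correct and follows essentially the same approach as the paper: both directions rely on Theorem \ref{perm} and Lemma \ref{lens} to reduce the problem to the combinatorics of how multiplication by $H(d)$ acts on the cosets $a_s$ weighted by the multiplicities $\#\bar A_s$, and Lemma \ref{supset} is used to assemble the summand maps in the $\supset$ direction. The paper's write-up differs only in minor organization (it first isolates the observation that $D_{iso}(L(p,q),L(p,q'))$ and $D_{iso}(L(p,q_*),L(p,q'_*))$ are either equal or disjoint according to whether $qq'_*/(q'q_*)$ is a square in $U_p$), but the logical content is the same as yours.
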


\begin{proof}
By Lemma \ref{lens}, we have
$D_{iso}(L(p,q),L(p,q'))=\{k^2q^{-1}q'+lp\ |\ gcd(k,p)=1\}$.
Therefore $D_{iso}(L(p,q),L(p,q'))$ will  not change if we replace
$L(p,q)$ by  $L(p,s^2q)$ (resp. $L(p,q')$ by $L(p,s^2q')$) for any
$s$ in $U_p$.

Now we consider the relation between two sets
$D_{iso}(L(p,q),L(p,q'))$ and $D_{iso}(L(p,q_*),L(p,q_*'))$. It is
also easy to see if $\frac{q}{q'} \frac{ q'_*}{q_*}=s^2$ in $U_p$,
then $D_{iso}(L(p,q),L(p,q'))$ $=D_{iso}(L(p,q_*),L(p,q_*))$, and if
$\frac{q}{q} \frac{q'_*}{q_*}\ne s^2$ in $U_p$, then
$D_{iso}(L(p,q),L(p,q'))$ $\bigcap
D_{iso}(L(p,q_*),L(p,q'_*))=\emptyset$.

Let $f:\ L(p,q_1)\#\cdots\#L(p,q_n)\rightarrow
L(p,q_1)\#\cdots\#L(p,q_n)$ be a map of degree $d\ne 0$. Suppose $f$
sends $L(p,q_i)$ to $L(p,q_k)$ and sends $L(p,q_j)$  to $L(p,q_l)$
in the sense of Theorem \ref{perm}. Since
$D_{iso}(L(p,q_i),L(p,q_k))$ $\bigcap D_{iso}(L(p,q_j),L(p,q_l))\ne
\emptyset$, by last paragraph, we must have $\frac{q_i}{q_k} \frac{
q_l}{q_j}=s^2$ in $U_p$. Hence $\frac{q_i}{q_j}$ is in $U^2$ if and
only if $\frac{ q_l}{q_k}$ is in $U^2$; in other words,
 $L(p,q_i)$ and $L(p,q_j)$ are in the same $\bar{A}_s$ if  and only if $L(p,q_k)$ and  $L(p,q_l)$
are in the same $\bar{A}_t$. Hence $f$  provides 1-1
self-correspondence on  $\bar{A}_1,\cdots,\bar{A}_m$, and  if some
elements in $\bar{A}_s$ corresponds to $\bar{A}_t$, there is $\#\bar
A_s=\#\bar A_t$.

Let  $f:\ L(p,q_1)\#\cdots\#L(p,q_n)\rightarrow
L(p,q_1)\#\cdots\#L(p,q_n)$ be a self-map. For each $a_i\in U/U^2$,
$f$ must send $\bar{A}_i$ to some $\bar{A}_j$ with $\#\bar
A_i=\#\bar A_j=l$, and both $a_i, a_j\in B_l$. Assume $L(p,q_i)\in
\bar{A}_i$, $L(p,q_j)\in \bar{A}_j$, then $deg(f)\in
\{k^2q_i^{-1}q_j+lp\ |\ gcd(k,p)=1\}$ by Lemma \ref{lens}. By
consider in $U/U^2$, we have $H(deg(f))=\bar q_j/\bar q_i=a_j/a_i $,
that is $H(deg(f))a_i=a_j\in B_l$. Since we choose arbitrary $a_i$
in $B_l$, we have $H(deg(f))\in C_l$. Also we choose arbitrary $l$,
we have $H(deg(f))\in \bigcap_{l=1}^{\infty} C_l=C$, hence
$deg(f)\in H^{-1}(C)$.

On the other hand, if $d\in H^{-1}(C)$, then $H(d)=c\in
C=\bigcap_{l=1}^{\infty} C_l$. For each $B_l\ne \emptyset$ and each
$a_i\in B_l$, we have $ca_i=a_j\in B_l$. Then $A_i\mapsto A_j$ gives
1-1 self-correspondence among $\{\bar A_i|\#\bar A_i=l \}$. We can
make further 1-1 correspondence from elements in $\bar A_i$ to
elements in $\bar A_j$. Since our discussion works for all $B_l\ne
\emptyset$, we have 1-1 self-correspondence on $\{L(p,q_1), ...,
L(p,q_n)\}$ (with repetition allowed). Therefore for each $L(p,
q_i)\in \bar A_i$ and $L(p, q_j)\in \bar A_j$, $c=\bar q_j \bar
q^{-1}_i$. Therefore $d$ have the form $k^2 q_j q^{-1}_i \text
{mod}\, p$ with $(k, p)=1$. By Lemma \ref{lens}, there is a map
$f_{i, j}: L(p, q_i)\to L(p, q_j)$ of degree $d$ which induces an
isomorphism on $\pi_1$.

By Lemma \ref{supset}, we can construct a self-mapping of degree $d$
of $L(p,q_1)\#\cdots\#L(p,q_n)$ which induces an isomorphism on
$\pi_1$. Hence $H^{-1}(C)\subset
D_{iso}(L(p,q_1)\#\cdots\#L(p,q_n))$. Thus
$D_{iso}(L(p,q_1)\#\cdots\#L(p,q_n))=H^{-1}(C)$.
\end{proof}

\section{$D(M$) for Nil manifolds}

\subsection{Self coverings of Euclidean orbifolds}


\begin{defn}[\cite{Sc}]
A  {\it 2-orbifold} is a Hausdorff, paracompact space which is
locally homeomorphic to the quotient space of $\mathbb{R}^2$ by a
finite group action. Suppose $\mathcal{O}_1$ and $\mathcal{O}_2$ are
orbifolds and $f: \mathcal{O}_1\rightarrow \mathcal{O}_2$ is an map.
We say $f$ is an {\it orbifold covering} if any point $p$ in
$\mathcal{O}_2$ has a neighbourhood $U$ such that $f^{-1}(U)$ is the
disjoint union of sets $V_\lambda, \lambda\in \Lambda$, such that
$f|: V_\lambda\rightarrow U$ is the natural quotient map between two
queotients of $\mathbb{R}^2$ by finite groups, one of which is a
subgroup of the other.
\end{defn}

In this paper, we only consider about orbifold with singular points.
Here we say a point $x$ in the orbifold is a {\it singular point of
index} $q$ if $x$ has a neighborhood $U$ homeomorphic to the
quotient space of $\mathbb{R}^2$ by rotate action of finite cyclic
group $\mathbb{Z}_q$, $q>1$.

An orbifold $\mathcal{O}$ with singular points $\{x_1,\cdots,x_s\}$
is homeomorphic to a surface $F$, but for the sake of the singular
points, we would like to distinguish them through denoting
$\mathcal{O}$ by $F(q_1,\cdots,q_s)$. Here $q_1,\cdots,q_s$ are
indices of singular points. Here the covering map $f:
\mathcal{O}_1\rightarrow \mathcal{O}_2$ is not the same as the
covering map from $F_1$ to $F_2$.

If $f: \mathcal{O}_1\rightarrow \mathcal{O}_2$ is an orbifold
covering, the singular points of $\mathcal{O}_2$ are
$\{x_1,\cdots,x_s\}$, for any $y\in \mathcal{O}_2, y\ne x_i$, define
$deg(f)=\#f^{-1}(y)$. For any singular point $x$, let
$f^{-1}(x)=\{a_{1},\cdots,a_{i}\}$. At point $a_j$, $f$ is locally
equivalent to $z\to z^{d_j}$ on $\mathbb{C}$, $x$ and $a_j$
correspond to $0$. Here we have$\sum d_j=d$, $a_j$ is an ordinary
point if and only if $d_j$ equals to the index of $x$. Define
$D(x)=[d_1, \cdots,d_i]$ to be the {\it orbifold covering data at
singular point $x$}, and $\mathfrak{D}(f)=\{D(x_1),\cdots,D(x_s)\}$
(with repetition allowed) to be the {\it orbifold covering data
of $f$}. 

The following lemma is easy to verify.

\begin{lem}
If a Nil manifold $M$ is not a torus bundle or a torus semi-bundle,
then $M$ has one of the following Seifert fibreing structures:
$M(0;\frac{\beta_1}{2},\frac{\beta_2}{3},\frac{\beta_3}{6})$,
$M(0;\frac{\beta_1}{3},\frac{\beta_2}{3},\frac{\beta_3}{3})$,
 or $M(0;\frac{\beta_1}{2},\frac{\beta_2}{4},\frac{\beta_3}{4})$, where $e(M) \in \mathbb{Q}-\{ 0\}$.
\end{lem}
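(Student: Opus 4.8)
The plan is to combine the standard structure theory of Nil manifolds with the classification of closed Euclidean $2$-orbifolds. By \cite{Sc}, every closed Nil $3$-manifold $M$ admits a Seifert fibration whose base orbifold $O_M$ is a closed Euclidean $2$-orbifold, i.e. $\chi(O_M)=0$; conversely, a Seifert manifold with $\chi(O_M)=0$ has geometry Nil exactly when $e(M)\neq 0$ and geometry $E^3$ exactly when $e(M)=0$. So for our $M$ we already have $\chi(O_M)=0$ and $e(M)\in\mathbb{Q}-\{0\}$, and it only remains to determine $O_M$.

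First I would enumerate the closed Euclidean $2$-orbifolds in the surface-with-cone-points form $F(q_1,\dots,q_s)$ used in this paper. Since $\chi(F)=\sum_{i=1}^s(1-\tfrac1{q_i})\ge 0$, the underlying closed surface $F$ must be $S^2$, $\mathbb{RP}^2$, $T^2$, or the Klein bottle, and a short case check on the multiset $\{q_i\}$ leaves exactly
$$T^2,\quad K,\quad \mathbb{RP}^2(2,2),\quad S^2(2,2,2,2),\quad S^2(3,3,3),\quad S^2(2,4,4),\quad S^2(2,3,6),$$
where $K$ denotes the Klein bottle.

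Finally I would show that over each of the first four base orbifolds the total space is necessarily a torus bundle or a torus semi-bundle, which forces $O_M$ for our $M$ to be one of the last three --- exactly the Seifert structures in the statement. Over $T^2$ the manifold is a circle bundle over the torus, which fibers over $S^1$ with torus fiber via any projection $T^2\to S^1$. For base $K$, $\mathbb{RP}^2(2,2)$, respectively $S^2(2,2,2,2)$, one splits the base along an essential circle into two sub-orbifolds --- two M\"obius bands; a M\"obius band and a disk with two order-$2$ cone points; two such disks --- over each of which the Seifert piece is the twisted $I$-bundle $N$ over the Klein bottle, exhibiting $M$ as $N\cup_\phi N$. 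I expect the only real (and still routine) point to be the bookkeeping of Euler numbers and orientations in these decompositions over non-orientable bases; but since we need only the existence of a bundle or semi-bundle structure, this reduces to the standard facts that the orientable circle bundle over the M\"obius band and the Seifert fibration of $N$ over the disk with two order-$2$ cone points both recover $N$ (cf. \cite{Sc}). Everything else is the orbifold Euler-characteristic count. One may also note, though it is not needed for the stated implication, that each of $S^2(3,3,3)$, $S^2(2,4,4)$, $S^2(2,3,6)$ with $e\neq 0$ is atoroidal --- any vertical torus bounds a fibered solid torus over the side containing a single cone point, and there is no horizontal incompressible torus since $e\neq 0$ --- so such $M$ is indeed neither a torus bundle nor a torus semi-bundle, which is what makes Classes 3 and 4 disjoint.
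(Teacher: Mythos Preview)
Your argument is correct and is precisely the standard verification one would carry out; the paper itself does not prove this lemma at all, merely prefacing it with ``The following lemma is easy to verify.'' So there is nothing to compare against --- you have supplied the omitted proof, and the route you take (enumerate the seven closed Euclidean cone-orbifolds arising as bases of orientable Seifert spaces, then check that the four non-triangle bases force a torus bundle or semi-bundle structure) is the natural one.
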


\begin{prop}\label{prop1}
Denote the degrees set of self covering of an orbifold $\mathcal{O}$
by $D(\mathcal{O})$. We have:

(1) For $\mathcal{O}=S^2(2,3,6)$, $D(\mathcal{O})=\{m^2+mn+n^2|
m,n\in\mathbb{Z},(m,n)\ne(0,0)\}$.

Moreover, if $d\in D(\mathcal{O})$ is coprime with 6, then

(i)  $d\equiv 1 \mod 6$;

(ii) this covering map of degree $d=6k+1$ is realized by an orbifold
covering from $\mathcal{O}$ to $\mathcal{O}$ with orbifold covering
data
$$\{D(x_1),D(x_2),D(x_3)\}=\{[\underbrace{2,\cdots,2}_{3k},1],
[\underbrace{3,\cdots,3}_{2k},1],[\underbrace{6,\cdots,6}_{k},1]\},$$
where $x_1$,  $x_2$ and $x_3$ are singular points of indices 2, 3
and 6 respectively.

(2) For $\mathcal{O}=S^2(3,3,3)$, $D(\mathcal{O})=\{m^2+mn+n^2|
m,n\in\mathbb{Z},(m,n)\ne (0,0)\}$.

Moreover, if $d\in D(\mathcal{O})$ is coprime with 3, then

(i)  $d\equiv 1 \mod 3$;

(ii) this covering map of degree $d=6k+1$ is realized by an orbifold
covering from $\mathcal{O}$ to $\mathcal{O}$ with orbifold covering
data
$$\{D(x_1),D(x_2),D(x_3)\}=\{[\underbrace{3,\cdots,3}_{k},1],
[\underbrace{3,\cdots,3}_k,1],[\underbrace{3,\cdots,3}_k,1]\},$$
where $x_1$,  $x_2$ and $x_3$ are singular points of indices 3, 3
and 3 respectively.

 (3) For $\mathcal{O}=S^2(2,4,4)$,
$D(\mathcal{O})=\{m^2+n^2| m,n\in\mathbb{Z},(m,n)\ne (0,0)\}$.

Moreover, if $d\in D(\mathcal{O})$ is coprime with 4, then

(i)  $d\equiv 1 \mod 4$;

(ii) this covering map of degree $d=4k+1$ is realized by an orbifold
covering from $\mathcal{O}$ to $\mathcal{O}$ with orbifold covering
data
$$\{D(x_1),D(x_2),D(x_3)\}=\{[\underbrace{2,\cdots,2}_{2k},1],[\underbrace{4,\cdots,4}_k,1],
[\underbrace{4,\cdots,4}_k,1]\},$$ where $x_1$,  $x_2$ and $x_3$ are
singular points of indices 2, 4 and 4 respectively.

\end{prop}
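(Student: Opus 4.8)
The plan is to recognize each of $S^2(2,3,6)$, $S^2(3,3,3)$, $S^2(2,4,4)$ as the underlying orbifold of a Euclidean (wallpaper) group --- $p6$, $p3$, $p4$ respectively --- and to convert self-coverings into an arithmetic statement about sublattices. In each case write $\Gamma=\pi_1^{orb}(\mathcal{O})\cong\Lambda\rtimes\mathbb{Z}_n$ with $n=6,3,4$, where $\Lambda\cong\mathbb{Z}^2$ is the translation lattice and $\mathbb{Z}_n$ acts by rotation about a common fixed point. Identify $\mathbb{E}^2$ with $\mathbb{C}$ so that $\Lambda$ becomes the ring of Eisenstein integers $\mathbb{Z}[\omega]$ ($\omega=e^{2\pi i/3}$) for $S^2(2,3,6)$ and $S^2(3,3,3)$, and the ring of Gaussian integers $\mathbb{Z}[i]$ for $S^2(2,4,4)$, with the generator of $\mathbb{Z}_n$ acting as multiplication by a primitive $n$-th root of unity. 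A degree-$d$ self-covering of $\mathcal{O}$ is the same thing as a subgroup $\Gamma'\le\Gamma$ of index $d$ with $\Gamma'\cong\Gamma$.

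For $D(\mathcal{O})\supseteq\{m^2+mn+n^2\mid(m,n)\ne(0,0)\}$ (resp. $\{m^2+n^2\mid(m,n)\ne(0,0)\}$): given nonzero $z\in\Lambda$, the map $\phi_z\colon w\mapsto zw$ commutes with the rotation and conjugates $\Gamma$ onto $(z\Lambda)\rtimes\mathbb{Z}_n\le\Gamma$, a subgroup of index $[\Lambda:z\Lambda]=|z|^2$; so $\phi_z$ induces a homeomorphism of $\mathcal{O}$ onto $\mathbb{E}^2/((z\Lambda)\rtimes\mathbb{Z}_n)$, and composing with the covering $\mathbb{E}^2/((z\Lambda)\rtimes\mathbb{Z}_n)\to\mathcal{O}$ produces a self-covering $f_z$ of degree $|z|^2$, and $|z|^2$ runs through exactly the values $m^2+mn+n^2$ (resp. $m^2+n^2$). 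For the reverse inclusion, a subgroup $\Gamma'\le\Gamma$ with $\Gamma'\cong\Gamma$ has translation subgroup $\Lambda'=\Lambda\cap\Gamma'$ (the translation subgroup is characteristic in a crystallographic group) and, being isomorphic to $\Gamma$, full point group $\mathbb{Z}_n$; hence $[\Lambda:\Lambda']=[\Gamma:\Gamma']=d$, and the rotations in $\Gamma'$ normalize $\Lambda'$, so $\Lambda'$ is a nonzero ideal of $\mathbb{Z}[\omega]$ (resp. $\mathbb{Z}[i]$). As these rings are principal, $\Lambda'=z\Lambda$ and $d=|z|^2$, a value of the relevant form. I expect this ``only if'' direction to be the crux: naive Riemann--Hurwitz/branching bookkeeping is not enough --- for example the ``degree $6$'' branching data on $S^2(2,3,6)$ satisfies the Hurwitz relation yet is unrealizable, precisely because $\mathbb{Z}[\omega]$ has no ideal of norm $6$ --- so one really needs the lattice/arithmetic translation, plus the structural fact that an abstract isomorphism $\Gamma'\cong\Gamma$ forces $\Gamma'$ to be a full-rotation group over a sublattice.

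Parts (i) are then elementary congruences: $m^2+mn+n^2\equiv(m-n)^2\pmod 3\in\{0,1\}$, so $\gcd(d,3)=1$ forces $d\equiv1\pmod 3$ (this gives $S^2(3,3,3)$), and together with $d$ odd --- part of $\gcd(d,6)=1$ --- this yields $d\equiv1\pmod 6$ for $S^2(2,3,6)$; similarly $m^2+n^2\bmod 4\in\{0,1,2\}$, so $d$ odd forces $d\equiv1\pmod 4$ for $S^2(2,4,4)$.

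For parts (ii) I would read off the orbifold covering data of $f_z$ when $|z|^2=d$ satisfies the coprimality hypothesis. From $\prod_{k=1}^{n-1}(1-\zeta^k)=n$ (with $\zeta$ a primitive $n$-th root of unity), each $\zeta^k-1$ has norm supported on the prime divisors of $n$, which the hypothesis makes coprime to $z$; hence $z$ is coprime to every $\zeta^k-1$, the action of $\mathbb{Z}_n$ on $(\Lambda/z\Lambda)\setminus\{0\}$ is free, and (for $S^2(3,3,3)$ one adds that $z$ is invertible modulo $1-\omega$, so $\phi_z$ permutes the three cone points). Consequently the fibre of $f_z$ over a cone point $x_i$ of order $n_i$ consists of exactly one point of local degree $1$ --- a cone point of the source, of order $n_i$ --- together with $(d-1)/n_i$ ordinary points of local degree $n_i$; since the orders $n_1,n_2,n_3$ exhaust the three cone points of the source, no other branching occurs. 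Thus $\mathfrak{D}(f_z)=\{[\underbrace{n_1,\dots,n_1}_{(d-1)/n_1},1],[\underbrace{n_2,\dots,n_2}_{(d-1)/n_2},1],[\underbrace{n_3,\dots,n_3}_{(d-1)/n_3},1]\}$, which for $S^2(2,3,6)$ with $d=6k+1$ reads $\{[\underbrace{2,\dots,2}_{3k},1],[\underbrace{3,\dots,3}_{2k},1],[\underbrace{6,\dots,6}_{k},1]\}$, and analogously in the other two cases; in particular the source is again $\mathcal{O}$, so $f_z$ is a genuine self-covering realizing the stated data.
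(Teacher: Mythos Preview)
Your argument is correct and follows essentially the same route as the paper: both identify $\pi_1^{orb}(\mathcal O)$ with a wallpaper group $\Lambda\rtimes\mathbb Z_n$ acting on $\mathbb C$, realize self-coverings by multiplication by a nonzero lattice element, and obtain the ``only if'' direction from the fact that the relevant sublattice is $\mathbb Z_n$-stable --- your phrasing via ideals in the PID $\mathbb Z[\omega]$ or $\mathbb Z[i]$ makes explicit what the paper handles with ``clearly, we can choose a fundamental domain of $f_*(\pi_1(S^2(2,3,6)))$ to be an equilateral triangle \ldots''.

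The one place the arguments diverge is part (ii). Rather than analyzing the specific map $f_z$ and tracking where each cone point lands, the paper gives a purely combinatorial argument showing that \emph{any} self-covering $h$ of degree $6k+1$ must have the stated data: a preimage of the index-$n_i$ point $x_i$ has local degree dividing $n_i$, and since the source $S^2(2,3,6)$ has a unique singular point of each index $2,3,6$, the congruence $6k+1\equiv 1\pmod{n_i}$ forces exactly one preimage of $x_i$ to have local degree $1$ (namely the unique index-$n_i$ point, which is then used up and unavailable over the other $x_j$) and the remaining $(6k)/n_i$ preimages to be ordinary. This is marginally stronger (uniqueness of the data, not just existence) and sidesteps the verification --- which you leave somewhat implicit --- that the order-$2$ and order-$3$ cone points of the source really do sit in the correct $\Gamma$-orbits downstairs.
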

\begin{proof}
We only prove case (1). The other two cases can be proved similarly.

$S^2(2,3,6)$ can be seen as pasting the equilateral triangle as
shown in figure 1 geometrically.

\begin{center}
\psfrag{a}[]{a} \psfrag{b}[]{b}
\includegraphics[height=2in]{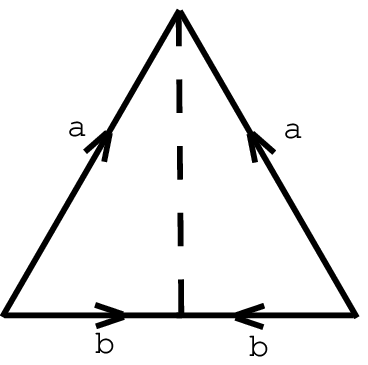}
\centerline{Figure 1}
\end{center}

\begin{center}
\psfrag{a}[]{a} \psfrag{b}[]{b} \psfrag{c}[]{0}
\psfrag{d}[]{1}\psfrag{e}[]{$e^{i\frac{\pi}{3}}$}
\includegraphics[height=3in]{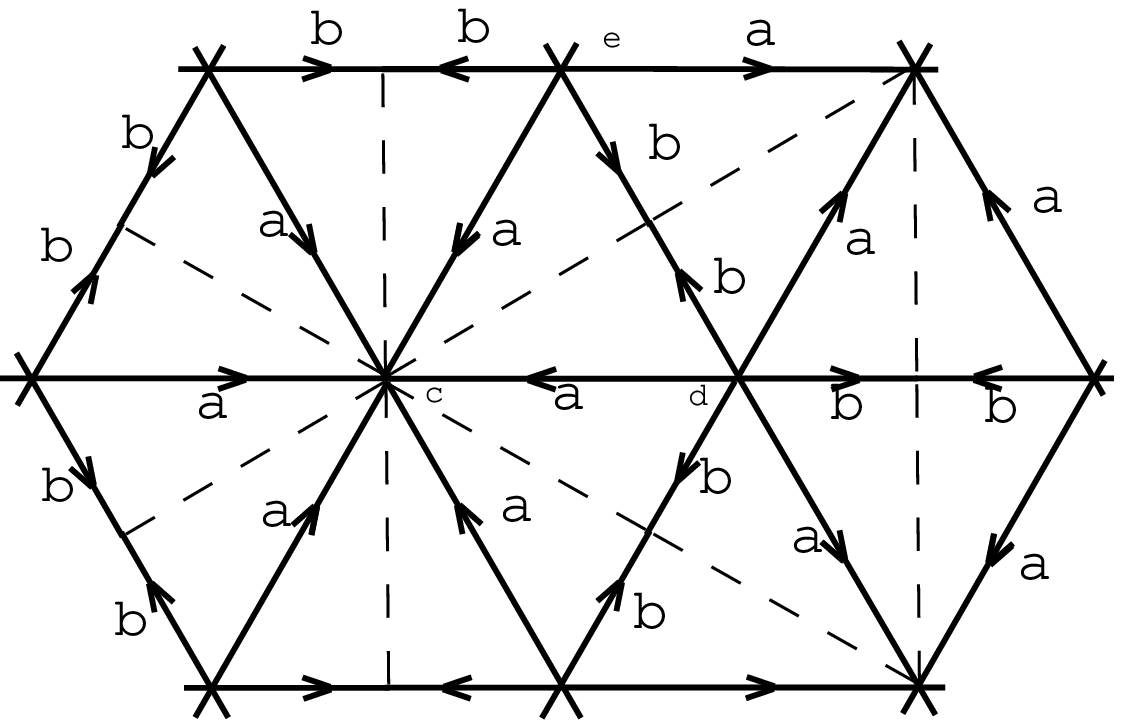}
\centerline{Figure 2}
\end{center}

$\pi_1(S^2(2,3,6))$ can be identified with a discrete subgroup
$\Gamma$ of $Iso_+(\mathbb{E}^2)$, a fundamental domain of $\Gamma$
is shown in figure 2. It is as a lattice in $\mathbb{E}^2$ with
vertex coordinate $m+ne^{i\frac{\pi}{3}},\ m,n\in\mathbb{Z}$.

For the covering $p:\ T^2\rightarrow S^2(2,3,6)$, $T^2$ can be seen
as the quotient of a subgroup $\Gamma' \subset \Gamma$ on
$\mathbb{E}^2$, with a fundamental domain as figure 3. Here
$\Gamma'$ is just all the translation elements of $\Gamma$, thus
$\Gamma'$ is generated by $z\rightarrow z+\sqrt{3}i$ and
$z\rightarrow z+\frac{\sqrt{3}}{2}i+\frac{3}{2}$.

\begin{center}
\psfrag{a}[]{0} \psfrag{b}[]{1}\psfrag{c}[]{$e^{i\frac{\pi}{3}}$}
\includegraphics[height=3.7in]{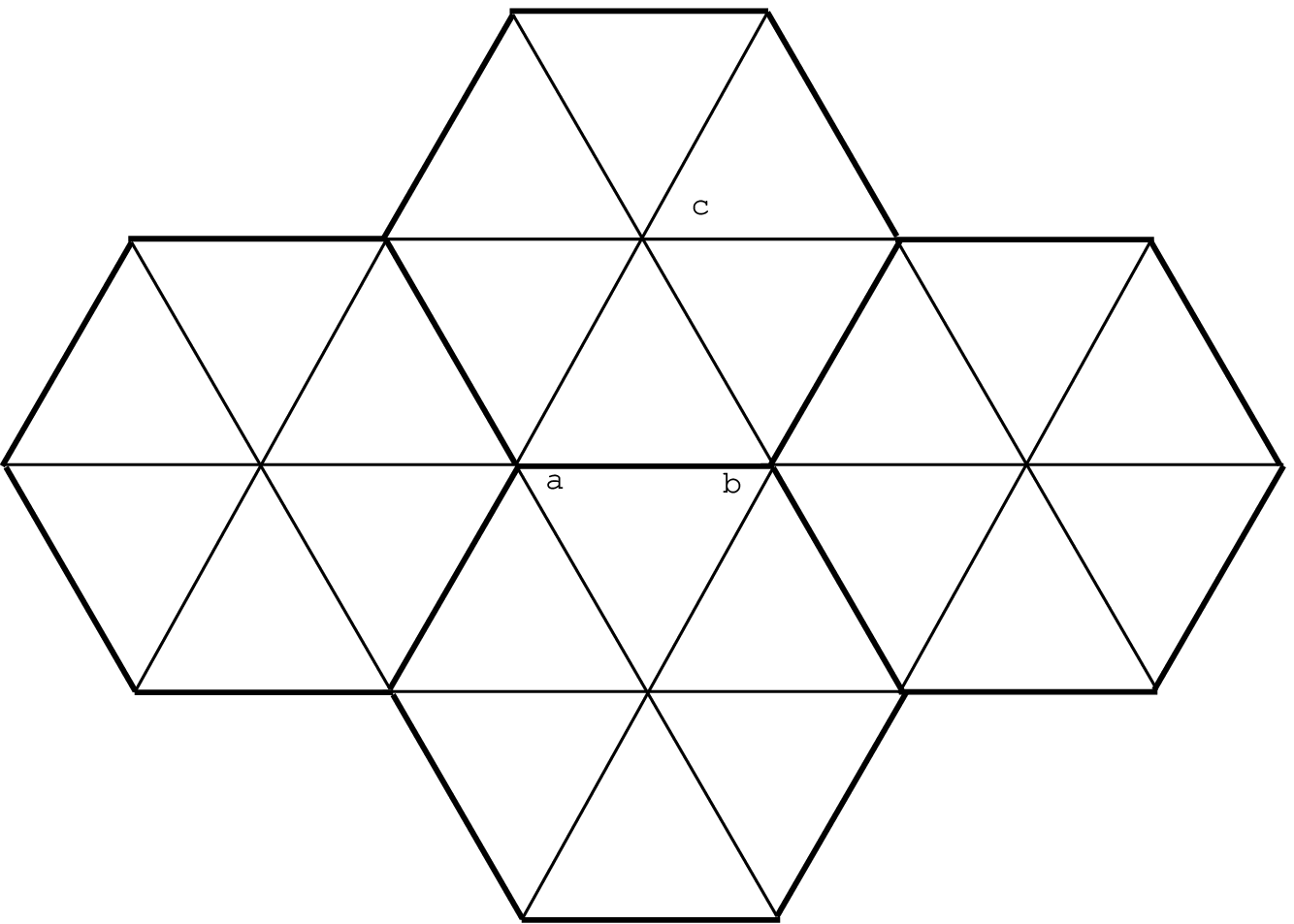}
\vskip 0.5 truecm \centerline{Figure 3}
\end{center}

For every self covering $f:\ S^2(2,3,6) \rightarrow S^2(2,3,6)$,
$f_*:\ \pi_1(S^2(2,3,6))\rightarrow\pi_1(S^2(2,3,6))$ is injective.
Since $p$ is covering, $f_*\circ p_*:\ \pi_1(T^2)\rightarrow
\pi_1(S^2(2,3,6))$ is also injective. So $f_*(p_*(\pi_1(T^2)))$ is a
free abelian subgroup of $\pi_1(S^2(2,3,6))$.

For every $\gamma\in\Gamma$, which is not translation, it can be
represented by $f:\ z\rightarrow e^{i\frac{2k\pi}{n}}z+z_0,\
gcd(k,n)=1,\ n>1$. Then $f^n(z)=(e^{i\frac{2k\pi}{n}})^n z+
(e^{i\frac{2k(n-1)\pi}{n}}+\cdots e^{i\frac{2k\pi}{n}}+1)z_0=z$. So
$\gamma$ is a torsion element, thus $\gamma \notin
f_*(p_*(\pi_1(T^2)))$ except $\gamma=e$. So $f_*(p_*(\pi_1(T^2)))
\subset p_*(\pi_1(T^2))$, thus there exists $\tilde{f}:\
T^2\rightarrow T^2$ being the lifting of $f$.
\[ \begin{CD}
T^2@> \tilde{f} >>
T^2\\
@V p VV                              @VV pV\\
S^2(2,3,6) @> f >> S^2(2,3,6).
\end{CD}  \]

Here we have
$$deg(f)=deg(\tilde{f})=[\pi_1(T^2):\tilde{f}_*(\pi_1(T^2))]=
\frac{{\rm area(fundamental\ domain\ of}\
\tilde{f}_*(\pi_1(T^2)))}{{\rm area(fundamental\ domain\ of} \
\pi_1(T^2))},$$ here $\tilde{f}_*(\pi_1(T^2)), \pi_1(T^2)$ are all
seen as subgroup of $\pi_1(S^2(2,3,6))$.

Clearly, we can choose a fundamental domain of
$f_*(\pi_1(S^2(2,3,6)))$ to be an equilateral triangle in
$\mathbb{E}^2$ with vertices as $m+ne^{i\frac{\pi}{3}}$,  then the
fundamental domain of $\tilde{f}_*(\pi_1(T^2))$ is an equilateral
hexagon with vertices as $m+ne^{i\frac{\pi}{3}}$. The scale of area
is the square of the scale of edge length. The scale of edge length
must be $|m+ne^{i\frac{\pi}{3}}|=\sqrt{m^2+mn+n^2}$. So
$deg(f)=m^2+mn+n^2$.

On the other hand, for every $(m,n)\in\mathbb{Z}^2-\{(0,0)\}$,
choose $g:\ \mathbb{E}^2\rightarrow \mathbb{E}^2$,
$g(z)=(m+ne^{i\frac{\pi}{3}})z$. It is routine to check that for any
$\gamma \in \Gamma$, there is $\gamma' \in \Gamma$, such that
$g(\gamma(z))=\gamma'(g(z))$. So $g$ induces $\bar{g}$, which is
self covering on $S^2(2,3,6)$, and $deg(\bar{g})=m^2+mn+n^2$. We
have proved the first sentence of Proposition \ref{prop1} (1).

If $m^2+mn+n^2$ is coprime to 6, $m^2+mn+n^2\equiv 1\ {\rm or}\ 5\
\text{mod}\ 6$. Since $m^2+mn+n^2\equiv 4m^2+4mn+4n^2 \equiv
(2m+n)^2 \ \text{mod}\ 3$, and any square number must be $0$ or $1$
{mod} $3$, we must have $m^2+mn+n^2\equiv1\ \text{mod}\ 6$. We have
proved Proposition \ref{prop1} (1) (i).

Assume $h$ is a self covering of degree $d=6k+1$, $x_1,x_2,x_3$ are
the singular points on $S^2(2,3,6)$ with indices $2,3,6$. For $x_1$,
$h^{-1}(x_1)$ must be ordinary points or singular point of index
$2$. Since the degree $d=6k+1$, $h^{-1}(x_1)$ is $3k$ ordinary
points and $x_1$. Similarly, for $x_2$, $h^{-1}(x_2)$ is $2k$
ordinary points and $x_2$. Then $x_1,x_2\notin h^{-1}(x_3)$, so
$h^{-1}(x_3)$ is $k$ ordinary points and $x_3$. Thus the covering
map of degree $d=6k+1$ is realized by a self covering of
$\mathcal{O}$ with orbifold covering data
$\{[2,\cdots,2,1],[3,\cdots,3,1],[6,\cdots,6,1]\}$. We have proved
Proposition \ref{prop1} (1) (ii).
\end{proof}

\subsection{$D(M)$ for Nil manifolds}

\begin{thm} For
3-manifold $M$ in Class 4, we have

(1) For
$M=M(0;\frac{\beta_1}{2},\frac{\beta_2}{3},\frac{\beta_3}{6})$,
$D(M)=\{l^2|l=m^2+mn+n^2, l\equiv 1 \mod 6, m,n \in \mathbb{Z}\}$;

(2) For
$M=M(0;\frac{\beta_1}{3},\frac{\beta_2}{3},\frac{\beta_3}{3})$,
$D(M)=\{l^2|l=m^2+mn+n^2, l\equiv 1 \mod 3, m,n \in \mathbb{Z}\}$;

(3) For
$M=M(0;\frac{\beta_1}{2},\frac{\beta_2}{4},\frac{\beta_3}{4})$,
$D(M)=\{l^2|l=m^2+n^2, l\equiv 1 \mod 4, m,n \in \mathbb{Z}\}$.
\end{thm}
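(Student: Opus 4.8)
The plan is to translate every nonzero-degree self-map of $M$ into a fiber-preserving map over a self-covering of the base Euclidean orbifold $\mathcal{O}\in\{S^2(2,3,6),S^2(3,3,3),S^2(2,4,4)\}$, read off the admissible degrees from Proposition \ref{prop1}, and obtain the reverse inclusion by lifting the explicit self-coverings produced there.

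\textbf{From self-maps of $M$ to self-coverings of $\mathcal{O}$.} Let $f:M\to M$ have degree $d\neq 0$. Since $M$ is a Nil manifold that is not a torus bundle or torus semi-bundle, its Seifert fibration is unique up to isotopy and the regular fiber $h$ generates the infinite cyclic centre of the torsion-free, virtually polycyclic group $\pi_1(M)$; in particular $f_*$ is injective and $f_*(h)=h^m$ with $m\neq 0$ (if $m=0$ then $f$ would factor through the underlying $2$-sphere of $\mathcal{O}$ and have degree $0$). By the standard position results for Seifert manifolds with non-zero Euler number, $f$ is homotopic to a fiber-preserving map, inducing an orbifold self-covering $\bar f:\mathcal{O}\to\mathcal{O}$ of some degree $n$, with $d=mn$ relative to the product orientation of base and fiber. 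Comparing the Euler numbers of the source and the target, which are both $M$, gives $e(M)=\tfrac{n}{m}\,e(M)$; since $e(M)\neq 0$ this forces $m=n$, hence $d=n^2>0$. Finally $f_*$ carries each torsion element of $\pi_1^{orb}(\mathcal{O})$ to one of the same order, so $\bar f$ sends a cone point of order $\alpha_i$ to a cone point of the same order with local degree $1$; counting preimages of that cone point (the remaining ones being regular, of local degree $\alpha_i$) gives $n\equiv 1\pmod{\alpha_i}$, hence $\gcd(n,\alpha_i)=1$ and $n\equiv 1$ modulo $\operatorname{lcm}(\alpha_1,\alpha_2,\alpha_3)$, which is $6$, $3$, $4$ in cases (1),(2),(3). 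Since $n\in D(\mathcal{O})$, Proposition \ref{prop1} yields the inclusion ``$\subseteq$''; e.g.\ in case (1), $D(M)\subseteq\{\,l^2\mid l=p^2+pq+q^2,\ l\equiv 1\bmod 6\,\}$.

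\textbf{Realizing the degrees.} Conversely, let $l\in D(\mathcal{O})$ with $l\equiv 1$ modulo $6$ (resp.\ $3$, $4$). By Proposition \ref{prop1}(ii) there is a self-covering $\bar f$ of $\mathcal{O}$ of degree $l$ in which each cone point is its own unique singular preimage with local degree $1$, all other preimages being regular. I lift $\bar f$ to a fiber-preserving self-map of $M$ with fiber degree $m=l$: over the regular part this is immediate, and over a neighbourhood of the exceptional fiber of order $\alpha_i$ and Seifert invariant $\beta_i$ the lift exists iff $(l-1)\beta_i\equiv 0\pmod{\alpha_i}$, which holds because $\gcd(\beta_i,\alpha_i)=1$ and $\alpha_i$ divides $6$ (resp.\ $3$, $4$) while $l\equiv 1$ modulo that number; the Euler number is unchanged since $n=m=l$. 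The resulting map has base degree $l$ and fiber degree $l$, hence degree $l^2$, which proves ``$\supseteq$''. (In case (2) one could also use self-coverings $\bar f$ that permute the three order-$3$ cone points, but by the same compatibility these contribute no new degrees.) The theorem follows in all three cases.

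\textbf{Main obstacle.} The crux is the interaction between an orbifold self-covering of $\mathcal{O}$ and the Seifert data of $M$. In the forward direction one must show that $\bar f$ is unramified at the cone points and that $m=n$, so that the ramified self-coverings of $\mathcal{O}$ which do exist --- for instance of degrees $3$ and $4$ for $S^2(2,3,6)$ --- cannot arise from a self-map of $M$; in the realization direction one must check that the unramified self-coverings of Proposition \ref{prop1} genuinely lift to $M$, i.e.\ that $(l-1)\beta_i\equiv 0\pmod{\alpha_i}$ always holds for admissible $l$. Keeping track of orientations, so that one gets $d=+n^2$ rather than $\pm n^2$ (equivalently, $-1\notin D(M)$), is part of this bookkeeping.
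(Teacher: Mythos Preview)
Your overall architecture matches the paper's --- homotope $f$ to a fiber-preserving covering, use the Euler-number relation $e(M)=\frac{n}{m}e(M)$ to force $m=n$ and hence $\deg f=n^2$ with $n\in D(\mathcal{O})$, then realize each admissible $n$ by lifting the explicit orbifold self-coverings of Proposition~\ref{prop1}. The realization step is essentially equivalent to the paper's (which factors the degree-$l^2$ covering as $M\to M_l\to M$), and your compatibility condition $(l-1)\beta_i\equiv 0\pmod{\alpha_i}$ is exactly what makes the paper's identification ``$M'\cong M_l$'' work.

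There is, however, a genuine gap in the forward direction. You assert that because $\bar f_*$ preserves the order of torsion elements in $\pi_1^{orb}(\mathcal{O})$, the cone point of order $\alpha_i$ must go to a cone point of the \emph{same} order with local degree $1$. Order preservation does not give this. In $\pi_1^{orb}(S^2(2,3,6))$ there are two conjugacy classes of elements of order~$2$: those fixing a point in the orbit of the order-$2$ cone point and those fixing a point in the orbit of the order-$6$ cone point. Concretely, the degree-$4$ self-covering of $S^2(2,3,6)$ induced by $z\mapsto 2z$ on $\mathbb{E}^2$ sends the order-$2$ cone point to the order-$6$ cone point with local degree $3$; the order-$2$ isotropy generator still lands on an order-$2$ element, so your criterion cannot detect this. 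Analogous ``bad'' self-coverings exist for $S^2(3,3,3)$ (degree $3$, all three cone points over one) and for $S^2(2,4,4)$ (degree $2$). Thus from the orbifold alone you cannot conclude $n\equiv 1\pmod{\alpha_i}$; what is missing is precisely the argument that these ramified self-coverings of $\mathcal{O}$ \emph{cannot arise from a self-covering of $M$}.

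The paper supplies this by reading the Seifert data, not the orbifold group. Looking at a preimage $V$ of the singular solid torus $V'$ over the cone point of maximal order (meridian $(c')^{6}(h')^{\beta_3}$), the meridian of $V$ is $c^{\alpha}h^{\beta}$ in the lifted section--fiber coordinates and maps to $(c')^{\alpha s}(h')^{\beta l}$; since meridian goes to meridian this forces $\beta\,l=\beta_3$, and as $\gcd(\beta_3,6)=1$ one gets $\gcd(l,6)=1$. Only then does Proposition~\ref{prop1}(1)(i) give $l\equiv 1\pmod 6$. You need an argument of this kind (using the actual invariants $\beta_i$) to close the gap; the torsion-in-$\pi_1^{orb}$ heuristic is not enough.
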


\begin{proof} We will just prove Case (1). The proof of Cases (2)
and (3) are exactly as that of Case (1). Below
$M=M(0;\frac{\beta_1}{2},\frac{\beta_2}{3},\frac{\beta_3}{6})$.

First we show that $D(M)\subset \{l^2|l=m^2+mn+n^2, l\equiv 1 \mod
6, m,n \in \mathbb{Z}\}$.

Suppose $f$ is a self map of $M$. By \cite[Corollary 0.4]{Wa}, $f$
is homotopic to a covering map $g: M\rightarrow M$. Since $M$ has
unique Seifert fibering structure up to isomorphism, we can make $g$
to be a fiber preserving map. Denote the orbifold of $M$ by $O_M$.
By \cite[Lemma 3.5]{Sc}, we have:
\begin{equation}\label{eq1}
\left\{ \begin{array}{l}
         e(M)=e(M)\cdot \frac{l}{m}, \\
         deg(g)=l\cdot m,
         \end{array} \right.
\end{equation}
where $l$ is the covering degree of $O_M\rightarrow O_M$ and $m$ is
the fiber degree.

Since $e(M)\neq 0$, from equation (\ref{eq1}) we get $l=m$. Thus
$deg(f)=deg(g)$ is a square number $l^2$. Since the orbifold
$O_M=S^2(2,3,6)$, by Proposition \ref{prop1} $(1)$, we have
$l=m^2+mn+n^2$. Below we show that $l=6k+1$.

Let $N$ be the regular neighborhood of 3 singular fibers. To define
the Seifert invariants,  a section $F$ of $M\setminus N$ is chosen,
and moreover  $\partial F$ and fibers on each component of $\partial
(M\setminus N)$ are oriented.

Consider the covering $g|: M\setminus g^{-1}(N)\to M\setminus N$.
Let $\tilde F$ be a component $g^{-1}(F)$. It is easy to verify that
$\tilde F$ is a section of $M\setminus g^{-1}(N)$. Now we lift the
orientations on $\partial F$ and the fibers on $\partial (M\setminus
N)$ to those on $\partial (M\setminus g^{-1}(N))$, we get a
coordinate system on $\partial (M\setminus g^{-1}(N))$. Therefore we
have a coordinate preserving covering

$$g: (M, M\setminus g^{-1}(N), g^{-1}(N))\to (M, M\setminus N, N).$$

Suppose $V'$ is a tubular neighborhood of some singular fiber $L'$.
The meridian of $V'$ can be represented by $(c')^{\alpha
'}(h')^{\beta '}$ ($\alpha '>0$), where $(c',h')$ is the section-
fiber coordinate of $\partial V'$.

\begin{center}
\psfrag{a}[]{meridian of $V'$} \psfrag{b}[]{meridian of
$V$}\psfrag{c}[]{$c'=$ a component of $\partial F$}\psfrag{d}[]{$c=$
a component of $\partial \tilde{F}$}
\psfrag{e}[]{$F$}\psfrag{f}[]{$\tilde{F}$}\psfrag{g}[]{$\partial
V'$}\psfrag{h}[]{$\partial V$}\psfrag{i}[]{$h'=$
 fiber}\psfrag{j}[]{$h=$ fiber}\psfrag{k}[]{$f$}
\includegraphics{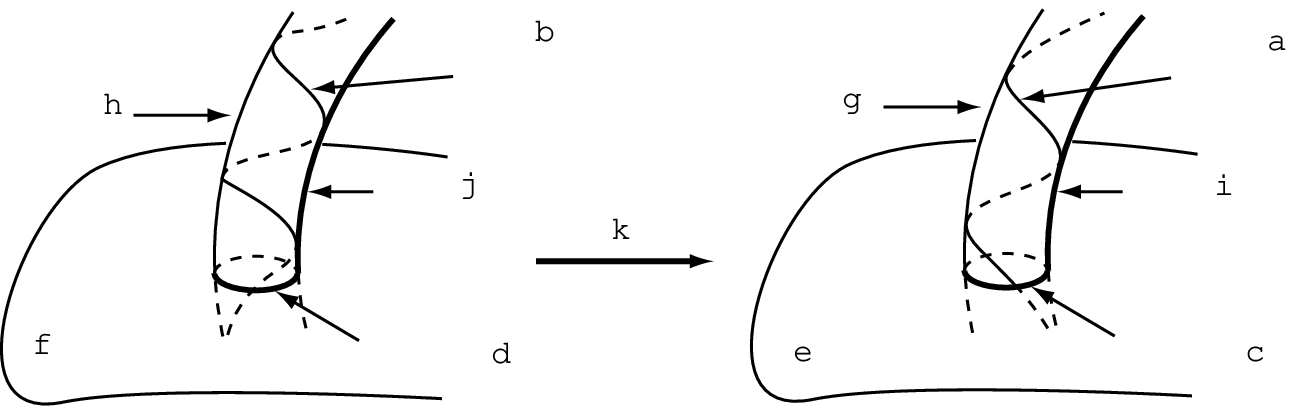}
\vskip 0.5 truecm \centerline{Figure 4}
\end{center}

Suppose $V$ is a component  of $g^{-1}(V')$  and the meridian of $V$
is represented as $c^{\alpha}h^{\beta}$ ($\alpha>0$), where $(c, h)$
is the lift of $(c', h')$. Since $g|:\ V\rightarrow V'$ is a
covering of solid torus, so $g$ must send meridian to meridian
homeomorphically, thus $g(c^{\alpha}h^{\beta})=(c')^{\alpha
'}(h')^{\beta '}$. See Figure 4.

Since  $g$ has the fiber degree $m=l$, $g(h)=(h')^l$. Since $c,c'$
are the boundaries of sections and $g$ send $c$ to $c'$, we have
$g(c)=(c')^s$. Then $ g(c^{\alpha}h^{\beta})=(c')^{\alpha\cdot
s}(h')^{\beta\cdot l}=(c')^{\alpha '}(h')^{\beta '} $. Hence  we get
$\beta\cdot l=\beta'$.

Let $V'$ be a tubular neighborhood of singular fiber whose meridian
can be represented as $(c')^6(h')^{\beta '}$. By the arguments
above, the meridian of the preimage $V$ can be represent by
$c^\alpha h^\beta$.

Since $\beta '$ is coprime with 6. By $\beta\cdot l=\beta'$, so $l$
is coprime with 6. Still by Proposition \ref{prop1} $(1)$, we have
$l=6k+1$.

Then we show $\{l^2|l=m^2+mn+n^2, l\equiv 1 \mod 6, m,n \in
\mathbb{Z}\}\subset D(M)$.

Suppose $l=m^2+mn+n^2$ and $l=6k+1$, denote the quotient manifold of
$\mathbb{Z}_l$ free action on $M$ by $M_l$. Then $M_l$ has the
Seifert fibering structure
$M(0;l\cdot\frac{\beta_1}{2},l\cdot\frac{\beta_2}{3},l\cdot\frac{\beta_3}{6})$.
We have the covering $g_l: M\to M_l$ of degree $l$.

\textbf{Claim:} there exists a map $f_l: M_l\rightarrow M$ of degree
$l$.

Let $D=D_1 \bigcup D_2 \bigcup D_3\subset S^2(2,3,6)$ be the regular
neighborhood discs of 3 singular points of indices 2, 3, and 6
respectively. By Proposition \ref{prop1} (1), there exists a
branched covering map $\bar {f_l}: S^2(2,3,6)\rightarrow S^2(2,3,6)$
of degree $l$ such that

(1) $\bar f_l$ induce a covering map $\bar f_l|:  S^2\setminus \bar
{f_l}^{-1}(D)\rightarrow S^2\setminus D$;

(2) $\bar {f_l}^{-1}(D_i)$ consists of  $(3k+1)$ discs with orbifold
covering data $[\underbrace{2,\cdots,2}_{3k},1]$ for $i=1$, and
$(2k+1)$ discs with orbifold covering data
$[\underbrace{3,\cdots,3}_{2k},1]$ for $i=2$, and $(k+1)$ discs with
orbifold covering  data
 $[\underbrace{6,\cdots,6}_{k},1]$ for $i=3$.

Clearly $\bar f_l^{-1}(D)$ consists of $(3k+1)+(2k+1)+(k+1)=6k+3$
disks.

Then we have the covering map $\bar{f_l}\times  id:
(S^2\setminus{f_l}^{-1}(D))\times S^1 \rightarrow (S^2\setminus
 D)\times S^1$ of degree $l$, which can be extends to a covering map
 $f_l: M'\rightarrow M$,  where $M'$ has the Seifert structure
$M(0;\underbrace{\beta_1,\cdots,\beta_1}_{3k},\frac{\beta_1}{2}
,\underbrace{\beta_2,\cdots,\beta_2}_{2k},\frac{\beta_2}{3},\underbrace{\beta_3,\cdots,\beta_3}_{k},\frac{\beta_3}{6})$.
Clearly $M'$ is isomorphic to $M_l$.

Now the covering $f_l\circ g_l: M\rightarrow M_l \rightarrow M$ has
degree $l^2$.

We finish the proof of Case (1).\end{proof}

\section{$D(M)$ for $\mathbb{H}^2\times \mathbb{E}^1$ manifolds}

In this case, all the manifolds are Seifert manifolds $M$ such that
the Euler number $e(M)=0$ and the Euler characteristic  of the
orbifold $\chi(O_M)<0$.

Suppose
$M=(g;\frac{\beta_{1,1}}{\alpha_1},\cdots,\frac{\beta_{1,m_1}}{\alpha_1},\cdots,\frac{\beta_{n,1}}{\alpha_n},\cdots,\frac{\beta_{n,m_n}}{\alpha_n})$,
where all the integers $\alpha_i>1$ are different from each other,
and $\sum_{i=1}^n \sum_{j=1}^{m_n} \frac{\beta_{i,j}}{\alpha_i}=0$.

For every $\alpha_i$, consider $U_{\alpha_i}$. For every $a\in
U_{\alpha_i}$, define $\theta_a(\alpha_i)=\#\{\beta_{i,j}\ |\
p_i(\beta_{i,j})=a\}$ (with repetition allowed), where $p_i$ is the
natural projection from $\{n\ |\ gcd(n,\alpha_i)=1\}$ to
$U_{\alpha_i}$. Define $B_l(\alpha_i)=\{a\ |\
\theta_a(\alpha_i)=l\}$ for $l=0,1,\cdots$, there are only finitely
many $B_l(\alpha_i)$ nonempty. Let $C_l(\alpha_i)=\{b\in
U_{\alpha_i}\ |\ ab\in B_l(\alpha_i),\ \forall a \in
B_l(\alpha_i)\}$ if $B_l(\alpha_i)\ne \emptyset$ and
$C_l(\alpha_i)=U_{\alpha_i}$ otherwise. Finally define
$C(\alpha_i)=\bigcap_{l=1}^{\infty} C_l(\alpha_i)$, and
$\bar{C}(\alpha_i)=p_i^{-1}(C(\alpha_i))$.

\begin{thm} $$D(M(g;\frac{\beta_{1,1}}{\alpha_1},\cdots,\frac{\beta_{1,m_1}}{\alpha_1},\cdots,\frac{\beta_{n,1}}{\alpha_n},\cdots,\frac{\beta_{n,m_n}}{\alpha_n}))=\bigcap_{i=1}^n
\bar{C}(\alpha_i).$$
\end{thm}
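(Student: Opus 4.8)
The plan is to adapt the argument used for the Nil case (the proof of Theorem~\ref{4}): reduce an arbitrary self-map to one that respects the Seifert fibration, and then read the possible degrees off the local behaviour at the singular fibers. Two features distinguish the present case. On one hand $M$ is aspherical (it is finitely covered by $\mathbb{H}^2\times\mathbb{E}^1$, hence by $F\times S^1$ with $\chi(F)<0$), so a self-map of $M$ is determined up to homotopy by its effect on $\pi_1(M)$, and by Lemma~\ref{iso} that effect need not be an isomorphism — this is exactly why care is needed. On the other hand the hypothesis $e(M)=0$ will make the vertical direction the only source of degree, in contrast with the Nil case where $e(M)\neq0$ forced the orbifold covering degree and the fiber degree to agree and produced only square degrees.

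First I would carry out the reduction. Since $\chi(O_M)<0$, the regular fiber $h$ generates the center $Z(\pi_1(M))\cong\mathbb{Z}$, and there is a central extension $1\to\mathbb{Z}\langle h\rangle\to\pi_1(M)\to\Delta\to1$ with $\Delta=\pi_1^{\mathrm{orb}}(O_M)$ a cocompact Fuchsian group. Given a self-map $f$ of degree $d\neq0$, $f_*$ preserves the center, so $f_*(h)=h^m$ and $f_*$ descends to $\bar f_*\colon\Delta\to\Delta$. A short cohomology computation for the extension, using $e(M)=0$ (so that over $\mathbb{Q}$ the Euler class vanishes and $H^3(\pi_1(M);\mathbb{Q})\cong H^2(\Delta;\mathbb{Q})\otimes H^1(S^1;\mathbb{Q})\cong\mathbb{Q}$), shows that $d\neq0$ forces $m\neq0$ and forces $\bar f_*$ to be injective. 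Then $\bar f_*(\Delta)$ is a finite-index subgroup abstractly isomorphic to $\Delta$; since a hyperbolic $2$-orbifold is determined by its fundamental group (see~\cite{Sc}), the covering orbifold it determines is homeomorphic to $O_M$, and comparing orbifold Euler characteristics, $\chi(O_M)=[\Delta:\bar f_*(\Delta)]\,\chi(O_M)$ with $\chi(O_M)<0$, forces $[\Delta:\bar f_*(\Delta)]=1$. Hence $\bar f_*$ is an automorphism of $\Delta$, realised by an orbifold homeomorphism $\psi$ of $O_M$, and $f$ is homotopic to a fiber-preserving map $g\colon M\to M$ with $\bar g=\psi$ and fiber degree $m=d$.

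For the ``only if'' inclusion, the homeomorphism $\psi$ permutes the cone points of $O_M$ preserving their orders, so for each $i$ it induces a permutation $\sigma_i$ of the $m_i$ cone points of order $\alpha_i$. Restricting $g$ to a fibered tubular neighbourhood of the singular fiber with invariant $\beta_{i,j}/\alpha_i$, which maps to that of the one with invariant $\beta_{i,\sigma_i(j)}/\alpha_i$ by a fiber-preserving self-map of a fibered solid torus of base degree $\pm1$ and fiber degree $d$, equivariance of the local $\mathbb{Z}_{\alpha_i}$-actions forces $d\,\beta_{i,j}\equiv\beta_{i,\sigma_i(j)}\pmod{\alpha_i}$ — the analogue of the relation $\beta\cdot l=\beta'$ in the Nil proof. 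Since $\sigma_i$ is a permutation, multiplication by $d$ preserves the multiset $\{\beta_{i,j}\bmod\alpha_i\}\subset U_{\alpha_i}$, i.e. $d$ preserves every $B_l(\alpha_i)$, i.e. $d\in\bigcap_l C_l(\alpha_i)=C(\alpha_i)$; as this holds for all $i$, $d\in\bigcap_{i=1}^n\bar C(\alpha_i)$ (the degree-$0$ map being suppressed, following the convention of the tables). Conversely, given $d\in\bigcap_i\bar C(\alpha_i)$, choose permutations $\sigma_i$ with $d\,\beta_{i,j}\equiv\beta_{i,\sigma_i(j)}\pmod{\alpha_i}$, realise $\{\sigma_i\}$ by an orientation-preserving homeomorphism $\psi$ of the underlying surface carrying the cone-point discs to themselves, take $(x,z)\mapsto(\psi(x),z^d)$ over the complement (a trivial circle bundle $\widehat F\times S^1$), and over each cone-point solid torus take a fiber-preserving self-map of base degree $+1$ and fiber degree $d$ realising the congruence $d\beta_{i,j}\equiv\beta_{i,\sigma_i(j)}$ (which exists precisely because of that congruence, as in the local branched coverings built in the Nil proof via Proposition~\ref{prop1}). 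These patch along the boundary tori to a fiber-preserving map $g\colon M\to M'$ of degree $d$, where $M'$ has invariants $\{d\beta_{i,j}/\alpha_i\}$; since $\sum_{i,j}d\beta_{i,j}/\alpha_i=d\cdot0=0=e(M)$ and $\{d\beta_{i,j}\bmod\alpha_i\}=\{\beta_{i,j}\bmod\alpha_i\}$ as multisets, $M'\cong M$ by the classification of Seifert fiberings, so $d\in D(M)$.

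The main obstacle is the first step: controlling an arbitrary self-map — possibly highly non-injective on $\pi_1$ — tightly enough to conclude it is vertical over a \emph{homeomorphism} of $O_M$ of degree $+1$. This is exactly where the hypotheses enter: $\chi(O_M)<0$ gives centrality of the fiber subgroup and, via rigidity of hyperbolic $2$-orbifolds, pins the orbifold covering degree to $1$, while $e(M)=0$ removes the constraint that the free part of the Euler class would otherwise impose on $d$. The remaining work — the solid-torus computation and the Seifert bookkeeping in the converse — is routine.
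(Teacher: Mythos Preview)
Your argument is essentially correct and reaches the same conclusion as the paper, but by a genuinely different route in the reduction step. The paper does not argue directly with the central extension and orbifold Euler characteristic; instead it invokes \cite[Corollary~0.4]{Wa} to say that any nonzero-degree self-map of such an $M$ is homotopic to a \emph{covering}, then uses uniqueness of the Seifert structure to make that covering fiber-preserving. Since $\chi(O_M)<0$, the induced orbifold self-covering has degree~$1$, so the full degree is the fiber degree~$d$, and one is reduced to the question of when the $\mathbb{Z}_d$-fiberwise quotient $M'=M(g;d\beta_{i,j}/\alpha_i)$ is homeomorphic to $M$; the Seifert classification (with $e(M)=0$) gives exactly the multiset condition, which is $\bigcap_i\bar C(\alpha_i)$. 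Your approach trades the black box \cite[Corollary~0.4]{Wa} for a hands-on analysis of $f_*$ via the center and the spectral sequence of $1\to\mathbb{Z}\to\pi_1(M)\to\Delta\to1$, and then uses $\chi(O_M)<0$ to pin down the index.

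Two places in your write-up deserve a bit more care. First, the step ``$d\neq0$ forces $\bar f_*$ injective'' is not literally a cohomology computation: what the spectral sequence gives (using $e(M)=0$) is $d=m\cdot\ell$ with $\ell$ the effect of $\bar f_*$ on $H_2(\Delta;\mathbb{Q})\cong\mathbb{Q}$; you then need a separate rigidity argument for cocompact Fuchsian groups (e.g.\ via a finite-index surface subgroup and Hopficity) to pass from $\ell\neq0$ to injectivity. Second, the assertion that $f$ is homotopic to a fiber-preserving $g$ with prescribed $\psi$ and fiber degree hides a choice of section: the local relation you want is only $d\beta_{i,j}\equiv\beta_{i,\sigma_i(j)}\pmod{\alpha_i}$, and making the boundary maps match across the solid tori uses the freedom to change the section (equivalently, the $H^1(\Delta;\mathbb{Z})$-ambiguity in lifting $\bar f_*$). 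The paper sidesteps both issues by working with an honest covering from the start.
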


\begin{proof} Suppose $f$ is a non-zero degree self-mapping of $M$. By \cite[Corollary 0.4]{Wa}, $f$
is homotopic to a covering map $g: M\rightarrow M$. Since $M$ has
the unique Seifert structure, we can isotopy $g$ to a fiber
preserving map. Denote the orbifold of $M$ by $O_M$. Then $g$
induces a self-covering $\bar{g}$ on $O_M$, since $\chi(O_M)<0$,
then $\bar{g}$ must be $1$-sheet, thus isomorphism of $O_M$.

So $g$ is a degree $d$ covering on the fiber direction. Or
equivalently, by the action of $\mathbb{Z}_d$ on each fiber, the
quotient of $M$ is also $M$. Thus $d \in D(M)$ if and only if
$$M'=M(g;d\frac{\beta_{1,1}}{\alpha_1},\cdots,d\frac{\beta_{1,m_1}}{\alpha_1},\cdots,d\frac{\beta_{n,1}}{\alpha_n},\cdots,d\frac{\beta_{n,m_n}}{\alpha_n})$$
is homeomorphic to $M$.

By the uniqueness of Seifert structure (\cite{Sc} Theorem 3.9) and
the fact $e(M)=0$, we have that $M$ is homeomorphism to $M'$ if and
only if
$(\beta_{i,1},\cdots,\beta_{i,m_i})=(d\beta_{i,1},\cdots,d\beta_{i,m_i})$
under a permutation, all the numbers are seen as in $U(\alpha_i)$.

For every $a\in U(\alpha_i)$, if
$(\beta_{i,1},\cdots,\beta_{i,m_i})=(d\beta_{i,1},\cdots,d\beta_{i,m_i})$
holds, we must have $\theta_a(\alpha_i)=\theta_{da}(\alpha_i)$, thus
$p_i(d)\in C_{\theta_a}(\alpha_i)$. For $a$ is an arbitrary element
in $U(\alpha_i)$, we have $p_i(d)\in C(\alpha_i)$, thus $d\in
\bar{C}(\alpha_i)$. Since $\alpha_i$ is also chosen arbitrarily,
$d\in \bigcap_{i=1}^n \bar{C}(\alpha_i)$, thus $D(M)\subset
\bigcap_{i=1}^n \bar{C}(\alpha_i)$.

For any $d\in \bigcap_{i=1}^n \bar{C}(\alpha_i)$, $M$ is
homeomorphic to $M'$, so $D(M)\supset \bigcap_{i=1}^n
\bar{C}(\alpha_i)$
\end{proof}

\textbf{Acknowledgement.} The authors are partially supported by
grant No.10631060 of the National Natural Science Foundation of
China and Ph.D. grant No. 5171042-055 of the Ministry of Education
of China.


\bibliographystyle{amsalpha}

 Hongbin Sun  \hskip 1true cm e-mail: hongbin.sun2331@gmail.com

School of Mathematical Sciences, Peking University, Beijing 100871,
China

\vskip 0.3 true cm

Shicheng Wang \hskip 1true cm e-mail: wangsc@math.pku.edu.cn

School of Mathematical Sciences, Peking University, Beijing 100871,
China

\vskip 0.3 true cm

Jianchun Wu \hskip 1true cm  e-mail: wujianchun@math.pku.edu.cn

School of Mathematical Sciences, Peking University, Beijing 100871,
China

\vskip 0.3 true cm

Hao Zheng \hskip 1true cm e-mail: zhenghao@mail.sysu.edu.cn

Depart. of Math., Zhongshan University, Guangzhou 510275, China

\end{document}